%
%
\documentclass[11pt,reqno]{amsart}%
\usepackage[a4paper,margin=3cm]{geometry}%
\usepackage{amssymb,latexsym,lmodern}
\usepackage[utf8]{inputenc}%
\usepackage[T1]{fontenc}%
\usepackage[unicode=true,pagebackref]{hyperref}%
\hypersetup{pdfborder=0 0 0}%
\title[Extremal eigenvalues of empirical covariance matrices]{On the
  convergence of the extremal eigenvalues of empirical covariance matrices
  with dependence}
\dedicatory{Dedicated to Alain Pajor \& Nicole Tomczak-Jaegermann}
%
%
\author{Djalil Chafaï}%
\address[DC]{CEREMADE, Université Paris-Dauphine, PSL, IUF, Paris, France.}%
\author{Konstantin Tikhomirov}%
\address[KT]{Dep.\ of Math.\ and Stat.\ Sciences, University of Alberta, Edmonton, Canada.}%
\keywords{Convex body; Random matrix; covariance matrix; singular value;
  operator norm; Sherman--Morrison formula; thin-shell inequality; log-concave
  distribution; dependence.}%
\subjclass{52A23 ; 15B52 ; 60B20 ; 62J10}%
\theoremstyle{plain}%
\newtheorem{theorem}{Theorem}[section]%
\newtheorem{definition}[theorem]{Definition}%
\newtheorem{proposition}[theorem]{Proposition}%
\newtheorem{corollary}[theorem]{Corollary}%
\newtheorem{lemma}[theorem]{Lemma}%
\newcommand{\FIXME}[1]{%
  $\clubsuit$\marginpar{%
    \ifodd\value{page}\flushleft\else\flushright\fi%
      {\fbox{#1}}}}%
\newcommand{\dE}{{\mathbb{E}}}%
\newcommand{\dN}{{\mathbb{N}}}%
\newcommand{\dP}{{\mathbb{P}}}%
\newcommand{\dR}{{\mathbb{R}}}%
\newcommand{\dX}{{\mathbb{X}}}%
\newcommand{\card}{\mathrm{card}}%
\newcommand{\diag}{\mathrm{diag}}%
\newcommand{\proj}{\mathrm{P}}%
\newcommand{\rank}{\mathrm{rank}}%
\newcommand{\supp}{\mathrm{supp}}%
\newcommand{\trace}{\mathrm{tr}}%
\newcommand{\veps}{\varepsilon}%
\newcommand{\ABS}[1]{{\left|#1\right|}}%
\newcommand{\DOT}[1]{{\langle#1\rangle}}%
\newcommand{\IND}{{\mathbf{1}}}%
\newcommand{\NRM}[1]{{\left\Vert#1\right\Vert}}%
\newcommand{\PAR}[1]{{\left(#1\right)}}%
\numberwithin{equation}{section}
\begin{document}

\begin{abstract}
  Consider a sample of a centered random vector with unit covariance matrix.
  We show that under certain regularity assumptions, and up to a natural
  scaling, the smallest and the largest eigenvalues of the empirical
  covariance matrix converge, when the dimension and the sample size both tend
  to infinity, to the left and right edges of the Marchenko--Pastur
  distribution. The assumptions are related to tails of norms of orthogonal
  projections. They cover isotropic log-concave random vectors as well as
  random vectors with i.i.d.\ coordinates with almost optimal moment
  conditions. The method is a refinement of the rank one update approach used
  by Srivastava and Vershynin to produce non-asymptotic quantitative
  estimates. In other words we provide a new proof of the Bai and Yin theorem
  using basic tools from probability theory and linear algebra, together with
  a new extension of this theorem to random matrices with dependent entries.
\end{abstract}

\maketitle

{\small\tableofcontents}

\section{Introduction}

Let $\dN=\{1,2,\ldots\}$ and let ${(X_n)}_{n\in\dN}$ be a sequence of random
vectors where for each $n\in\dN$ the random vector $X_n$ takes values in
$\dR^n$, is centered, with unit covariance (isotropy):
\begin{equation}\label{eq:Xn}
  \dE(X_n)=0\quad\text{and}\quad\dE(X_n\otimes X_n)=I_n
\end{equation}
where $I_n$ is the $n\times n$ identity matrix. Let $(m_n)_{n\in\dN}$ be a
sequence in $\dN$ such that
\begin{equation}\label{eq:mn}
  0<\underline{\rho}:=\liminf_{n\to\infty} \frac{n}{m_n}
  \le\limsup_{n\to\infty} \frac{n}{m_n}=:\overline{\rho}<\infty.
\end{equation}
For every $n\in\dN$, let $X_n^{(1)},\ldots,X_n^{(m_n)}$ be i.i.d.\ copies of
$X_n$. Their empirical covariance matrix is the $n\times n$ symmetric
positive semidefinite random matrix
\begin{equation}\label{eq:Sigman}
\widehat\Sigma_n
:=\frac{1}{m_n}\sum_{k=1}^{m_n} X_n^{(k)}\otimes X_n^{(k)}.
\end{equation}
If $\dX_n$ denotes the $m_n\times n$ rectangular random matrix with i.i.d.\ rows
$X_n^{(1)},\ldots,X_n^{(m_n)}$ then
\[
\widehat\Sigma_n=\frac{1}{m_n}\dX_n^\top \dX_n.
\]
Note that $\dE\widehat\Sigma_n=\dE(X_n\otimes X_n)=I_n$. For
convenience we define the random matrix
\begin{equation}\label{eq:A}
  A_n:=m_n\widehat\Sigma_n%
  =\dX_n^\top \dX_n%
  =\sum_{k=1}^{m_n}X_n^{(k)}\otimes X_n^{(k)}.
\end{equation}
The eigenvalues of $A_n$ are squares of the singular values of
$\dX_n$, and in particular
\[
\lambda_{\max}(A_n)
=s_{\max}(\dX_n)^2
=\max_{\|x\|=1}\|\dX_nx\|^2
=\|\dX_n\|_{\mathrm{2\to2}}^2.
\]
When $m_n\geq n$ then the smallest eigenvalue of $A_n$ satisfies
\[
\lambda_{\min}(A_n)
=s_{\min}(\dX_n)^2
=\min_{\|x\|=1}\|\dX_n x\|^2
=\|\dX_n^{-1}\|_{\mathrm{2\to2}}^{-2}
\]
where the last formula holds only when $\dX_n$ is invertible (impossible if
$m_n<n$). Above and in the sequel we denote by
$\|x\|=(x_1^2+\cdots+x_n^2)^{1/2}$ the Euclidean norm of $x\in\dR^n$.

If ${((X_n)_k)}_{1\leq k\leq n,n\geq1}$ are i.i.d.\ standard Gaussians, then
the law of the random matrix $\widehat\Sigma_n$ is known as the real Wishart
law, and constitutes a sort of a matrix version of the $\chi^2(n)$ law. The law
of the eigenvalues of $\widehat\Sigma_n$ is then called the Laguerre
Orthogonal Ensemble, a Boltzmann--Gibbs measure with density on
$\{\lambda\in[0,\infty)^m:\lambda_1\geq\cdots\geq\lambda_n\}$ proportional to
\[  
\lambda\mapsto%
\exp\Bigr(-\frac{m_n}{2}\sum_{k=1}^n\lambda_k%
+\frac{m_n-n+1}{2}\sum_{k=1}^n\log(\lambda_n)%
+\sum_{i<j}\log\ABS{\lambda_i-\lambda_j}\Bigr),
\]
see \cite[(7.4.2) p.~192]{MR2808038}. This exactly solvable Gaussian model
allows to deduce sharp asymptotics for the empirical spectral distribution as
well as for the extremal eigenvalues of $\widehat\Sigma_n$. The famous
Marchenko--Pastur theorem \cite{MR2808038,MR0208649} states that if
\[
\frac{n}{m_n}\underset{n\to\infty}{\longrightarrow}\rho%
\quad\text{with}\quad%
\rho\in(0,\infty)%
\]
then almost surely as $n\to\infty$, the empirical spectral distribution of
$\widehat\Sigma_n$ tends weakly with respect to continuous and bounded test functions
to a non-random distribution, namely
\begin{equation}\label{eq:MP}
a.s.\quad
\frac{1}{m_n}\sum_{k=1}^n\delta_{\lambda_k(\widehat\Sigma_n)}
\overset{\mathcal{C}_b}{\underset{n\to\infty}{\longrightarrow}}
\mu_\rho
\end{equation}
where $\mu_\rho$ is the Marchenko--Pastur distibution 
on $[a^-,a^+]$ with $a^\pm=(1\pm\sqrt{\rho})^2$ given by
\[
\mu_\rho(dx)
=\frac{\rho-1}{\rho}\IND_{\rho>1}\delta_0
+\frac{\sqrt{(a^+-x)(x-a^-)}}{\rho 2\pi x}\,\IND_{[a^-,a^+]}(x)dx.
\]
It is a mixture between a Dirac mass at point $0$ and an absolutely continuous
law. The atom at $0$ disappears when $\rho\leq1$ and is a reminiscence of the
rank of $\widehat\Sigma_n$. The asymptotic phenomenon \eqref{eq:MP} holds
beyond the Gaussian case. In particular it was shown by Pajor and Pastur
\cite{MR2539559} that it holds if for every $n\in\dN$ the distribution of the
isotropic random vector $X_n$ is log-concave. Recall that a probability
measure $\mu$ on $\dR^n$ with density $\varphi$ is log-concave when
$\varphi=e^{-V}$ with $V$ convex, see \cite{MR0388475,MR3185453}.
Log-concavity allows some kind of geometric dependence but imposes
sub-exponential tails. The asympotitic phenomenon \eqref{eq:MP} also holds if
${((X_n)_k)}_{1\leq k\leq n,n\geq1}$ are i.i.d.\ with finite second moment
\cite{MR2567175,MR2808038}. An extension to various other models can be found
in Bai and Zhou \cite{MR2411613}, Pastur and Shcherbina \cite{MR2808038},
Adamczak \cite{MR2820070}, and Adamczak and Chafaï \cite{MR3359233}.

The weak convergence \eqref{eq:MP} does not provide much information at the
edge on the behavior of the extremal atoms, and what one can actually extract
from \eqref{eq:MP} is that
\begin{equation}\label{eq:MPedge}
  a.s.\quad 
  \limsup_{n\to\infty}\lambda_{\min}(\widehat\Sigma_n)
  \leq
  (1-\sqrt{\rho})^2
  \leq
  (1+\sqrt{\rho})^2
  \leq
  \liminf_{n\to\infty}\lambda_{\max}(\widehat\Sigma_n)
\end{equation}
where the first inequality is considered only in the case where $m_n\geq n$.
If ${((X_n)_k)}_{1\leq k\leq n,n\geq1}$ are i.i.d.\ with finite fourth moment
then it was shown by Bai and Yin \cite{MR958213,MR950344,MR1235416} using
combinatorics that the convergence in \eqref{eq:MPedge} holds:
\begin{equation}\label{eq:BY}
  a.s.\quad 
  (1-\sqrt{\rho})^2
  =
  \lim_{n\to\infty}\lambda_{\min}(\widehat\Sigma_n)
  \leq
  \lim_{n\to\infty}\lambda_{\max}(\widehat\Sigma_n)
  =
  (1+\sqrt{\rho})^2,  
\end{equation}
where the first equality is considered only in the case where $m_n\geq n$. The
convergence of the largest eigenvalue in the right hand side of \eqref{eq:BY}
does not take place if ${((X_n)_k)}_{1\leq k\leq n,n\geq1}$ are i.i.d.\ with
infinite fourth moment, see \cite{MR2567175}. It was understood recently that
the convergence of the smallest eigenvalue in the left hand side of
\eqref{eq:BY} holds actually as soon as ${((X_n)_k)}_{1\leq k\leq n,n\geq1}$
are i.i.d.\ with finite second moment, see Tikhomirov
\cite{tikhomirov-asymptotic}.

An analytic proof of \eqref{eq:BY} based on the resolvent is also available,
and we refer to Bordenave \cite{bordenave} for the i.i.d.\ case, and to Bai
and Silverstein \cite{MR1617051}, Pillai and Yin \cite{pillai-yin}, and
Richard and Guionnet \cite{richard-guionnet} for more sophisticated models
still not including the case in which the law of $X_n$ is log-concave for
every $n\in\dN$. 
Note that the analytic approach was also used for various models of random
matrices by Haagerup and Thorbj{\o}rnsen \cite{MR2183281}, Schultz
\cite{MR2117954}, and by Capitaine and Donati-Martin \cite{MR2317545}.

The study of quantitative high dimensional non-asymptotic properties of the
smallest and of the largest eigenvalues of empirical covariance matrices was
the subject of an intense line of research in the recent years; in connection
with log-concavity, see for instance Adamczak, Litvak, Pajor, and
Tomczak-Jaegermann \cite{MR2601042,MR2769907}, Rudelson and Vershynin
\cite{MR2827856}, Koltchinskii and Mendelson \cite{koltchinskii-mendelson},
and references therein.

Non-asymptotic estimates for \eqref{eq:BY} were obtained by Srivastava and
Vershynin \cite{srivastava-vershynin} using a rank one update strategy which
takes advantage of the decomposition \eqref{eq:A}. This approach is an
elementary interplay between probability and linear algebra, which is
remarkably neither analytic nor combinatorial. The outcome is that with high
probability
\begin{equation}\label{eq:SV}
\Bigr(1-c^-\sqrt{\frac{n}{m_n}}\,\Bigr)^2
\leq
\lambda_{\min}(\widehat\Sigma_n)
\leq
\lambda_{\max}(\widehat\Sigma_n)
\leq \Bigr(1+c^+\sqrt{\frac{n}{m_n}}\,\Bigr)^2,
\end{equation}
where the first inequality is considered only in the case where $m_n\geq n$.
Here $c^\pm>0$ are constants, and thus one cannot deduce \eqref{eq:BY} from
\eqref{eq:SV}. The approach of Srivastava and Vershynin is a randomization of the
spectral sparsification method developed by Batson, Spielman, and Srivastava \cite{MR2780071,MR3029269};
the idea of using rank one updates can also be found in the
works of Benaych-Georges and Nadakuditi \cite{MR2944410}. This approach requires the
control of tails of norms of projections of $X_n$, a condition which is
satisfied by log-concave distributions thanks to the thin-shell phenomenon.
This condition is also satisfied if ${((X_n)_k)}_{1\leq k\leq n,n\geq1}$ are
i.i.d.\ with a finite moment of order $2+\veps$ for the lower bound on the
smallest eigenvalue in \eqref{eq:BY} and i.i.d.\ with a finite moment of order
$4+\veps$ for the upper bound on the largest eigenvalue in \eqref{eq:BY}. This
method was also used recently by Yaskov \cite{yaskov,yaskov-sharp}.

Our main results below lie between the original Bai--Yin theorem and the more
recent work of Srivastava and Vershynin. Our contribution is to show that the
non-asymptotic approach of Srivastava and Vershynin is indeed suitable to
prove and extend the sharp Bai--Yin theorem, which is an asymptotic result,
under fairly general asumptions on tails of norms of projections of $X_n$,
which allow heavy tailed i.i.d.\ as well as log-concavity! When the
coordinates of $X_n$ are i.i.d.\ our approach reaches the (almost) optimal
moment condition: finite second moment for the smallest eigenvalue and finite
fourth moment for the largest.

Our results are based on the following tail conditions on the norm of
projections.

\begin{definition}[Weak Tail Projection property $\mathrm{(WTP)}$]
  Let $X_n$, $n\in\dN$, be as in \eqref{eq:Xn}. We say that the \emph{Weak
    Tail Projection} $\mathrm{(WTP)}$ property holds when the following
  is true:
  \begin{itemize}
  \item[\textbf{(a)}] The family ${(\langle
      X_{n},y\rangle^2)}_{n\in\dN,y\in S^{n-1}}$ is uniformly
    integrable, in other words
      \begin{equation*}\label{eq:WTP-a}\tag{WTP-a} 
        \lim_{M\to\infty}\sup_{\substack{n\in\dN\\y\in S^{n-1}}}
        \dE\bigl(%
        \langle X_n,y\rangle^2\IND_{\{\langle X_n,y\rangle^2\geq M\}}%
        \bigr)=0,
      \end{equation*}
      where $S^{n-1}:=\{y\in\dR^n:\|y\|=1\}$ denotes the unit sphere of
      $\dR^n$;
    \item[\textbf{(b)}] There exist two functions $f:\dN\to[0,1]$ and
      $g:\dN\to\dR_+$ such that $f(r)\to0$ and $g(r)\to0$ as $r\to\infty$ and
      for every $n\in\dN$ and any orthogonal projection $\proj:\dR^n\to\dR^n$
      with $\proj\neq0$,
  \begin{equation*}\label{eq:WTP-b}\tag{WTP-b} 
    \dP\PAR{\|\proj X_n\|^2-\rank \proj\ge f(\rank \proj)\rank \proj}
    \leq g(\rank \proj).
  \end{equation*}
  This can be less formally written as $\dP\bigl(\NRM{\proj X_n}^2-r\geq
  o(r)\bigr)\leq o(1)$ where $r:=\rank\proj$ and where the ``$o$'' are with
  respect to $r$ and are uniform in $n$.
\end{itemize}
\end{definition}

\begin{definition}[Strong Tail Projection property (STP)]
  Let $X_n$, $n\in\dN$, be as in \eqref{eq:Xn}. We say the \emph{Strong Tail
    Projection} (STP) property holds when there exist $f:\dN\to[0,1]$ and
  $g:\dN\to\dR_+$ such that $f(r)\to0$ and $g(r)\to0$ as $r\to\infty$, and for
  every $n\in\dN$, for any orthogonal projection $\proj:\dR^n\to\dR^n$ with
  $\proj\neq0$, for any real $t\geq f(\rank \proj)\rank \proj$ we have
  \begin{equation*}\label{eq:STP}\tag{STP} 
    \dP\PAR{\|\proj X_n\|^2-\rank \proj\ge t} %
    \le \frac{g(\rank \proj)\rank \proj}{t^2}.
  \end{equation*}
  This can be less formally written as $\dP(\NRM{\proj X_n}^2-r\geq t)\leq
  o(r)t^{-2}$ where $t\geq o(r)$ and $r:=\rank\proj$ and where the ``$o$'' are
  with respect to $r$ and are uniform in $n$.
\end{definition}

Note that $\dE(\|\proj X_n\|^2)=\rank\proj$ since $X_n$ is isotropic. The
properties $\mathrm{(WTP)}$ and \eqref{eq:STP} were inspired by the ``strong
regularity assumption'' used by Srivastava and Vershynin
\cite{srivastava-vershynin}. They are specially designed to obtain a sharp
Bai--Yin type asymptotic result in the i.i.d.\ case with (almost) optimal
moment assumptions, as well as in the log-concave case.

It is easy to see that \eqref{eq:STP} implies that for any $\veps>0$,
the $4-\veps$ moments of $1$-dimensional marginals of $X_n$'s are
uniformly bounded; in particular, in this case the vectors ${(X_n)}_{n\in\dN}$
satisfy condition \eqref{eq:WTP-a}. Next, condition \eqref{eq:WTP-b} is
clearly weaker than \eqref{eq:STP}; thus, \eqref{eq:STP} implies
$\mathrm{(WTP)}$, hence the qualifiers ``Strong'' and ``Weak'' for these
properties.

Proposition \ref{pr:logconc} below, proved in Section \ref{se:pr:logconc+iid},
implies that if ${(X_n)}_{n\in\dN}$ is as in \eqref{eq:Xn} and if $X_n$ is
log-concave for every $n\in\dN$ then properties $\mathrm{(WTP)}$ and
\eqref{eq:STP} are satisfied. It is a consequence of the thin-shell and
sub-exponential tails phenomena for these distributions.

\begin{proposition}[Log-concave random vectors]
  \label{pr:logconc}
  Let $X_n$, $n\in\dN$, be as in \eqref{eq:Xn}, and suppose that the centered
  isotropic random vector $X_n$ is log-concave for any $n\in\dN$. Then a
  stronger form of \eqref{eq:STP} holds with
  \[
  \{\NRM{\proj X_n}^2-\rank\proj\geq t\}%
  \quad\text{replaced by}\quad%
  \{|\NRM{\proj X_n}^2-\rank \proj|\geq t\}.
  \]
\end{proposition}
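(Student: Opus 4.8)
The plan is to reduce the two-sided tail bound on $\NRM{\proj X_n}^2-r$, where $r=\rank\proj$, to the combination of two classical facts about isotropic log-concave vectors: the thin-shell (variance) estimate and the sub-exponential concentration of the Euclidean norm. Write $Y=\proj X_n$, which is again a centered log-concave random vector supported on the $r$-dimensional subspace $\mathrm{range}(\proj)$ and isotropic there, so $\dE\NRM{Y}^2=r$. We must show $\dP(\ABS{\NRM{Y}^2-r}\ge t)\le g(r)r/t^2$ for all $t\ge f(r)r$, with $f,g\to0$.

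First I would recall the thin-shell inequality: for an isotropic log-concave random vector $Y$ in $\dR^r$ one has $\mathrm{Var}(\NRM{Y})\le C\sigma_r^2$ where $\sigma_r\to0$ (indeed $\sigma_r$ is bounded by a slowly growing or constant quantity divided by a power of $r$, but even the crude bound $\sigma_r=o(1)$, available from e.g.\ \cite{MR3185453} and the references there on the thin-shell conjecture's known estimates, suffices here). Equivalently $\mathrm{Var}(\NRM{Y}^2)\le C r\sigma_r^2\cdot(1+o(1))$ after relating the variance of the square to the variance of the norm via $\NRM{Y}^2-r=(\NRM{Y}-\sqrt r)(\NRM{Y}+\sqrt r)$; on the event where $\NRM{Y}$ is comparable to $\sqrt r$ this factor is $O(\sqrt r)$. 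Then Chebyshev gives $\dP(\ABS{\NRM{Y}^2-r}\ge t)\le C r\sigma_r^2/t^2$ on the relevant range of $t$, which is exactly the desired form with $g(r)=C\sigma_r^2\to0$. To control the complementary event where $\NRM{Y}$ is far from $\sqrt r$ — which is where the linearization $\NRM{Y}+\sqrt r\asymp\sqrt r$ fails — I would invoke the sub-exponential deviation bound of Paouris: $\dP(\NRM{Y}\ge c\sqrt r+u)\le e^{-cu}$ for $u\gtrsim\sqrt r$, together with a matching lower-tail estimate (the small-ball estimate for $\NRM{Y}$, also a consequence of log-concavity; see again \cite{MR3185453}). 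This shows the contribution of the far region to $\dP(\ABS{\NRM{Y}^2-r}\ge t)$ decays faster than any polynomial in $r$ once $t\ge f(r)r$ with $f(r)$ tending to $0$ slowly enough (e.g.\ $f(r)=r^{-1/4}$), hence is absorbed into $g(r)r/t^2$.

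Putting the pieces together: split $\{\ABS{\NRM{Y}^2-r}\ge t\}$ according to whether $\NRM{Y}\le 2\sqrt r$ or not. On the first event use Chebyshev with the thin-shell variance bound, noting $\NRM{Y}+\sqrt r\le 3\sqrt r$ so $\ABS{\NRM{Y}^2-r}\ge t$ forces $\ABS{\NRM{Y}-\sqrt r}\ge t/(3\sqrt r)$, giving probability at most $9r\,\mathrm{Var}(\NRM Y)/t^2\le 9Cr\sigma_r^2/t^2$. On the second event use Paouris' bound, which is $\le e^{-c\sqrt r}$, and observe that for $t\le r$ (the interesting range, since for $t\gtrsim r$ the claim is even easier from Paouris alone) $e^{-c\sqrt r}\le r\sigma_r^2/t^2$ holds for all large $r$ once we set, say, $g(r):=\max(9C\sigma_r^2,\ r^{3}e^{-c\sqrt r})$, which still tends to $0$. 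Choosing $f(r)$ to vanish (any rate works, the $t^2$ in the denominator is the binding constraint) completes the argument.

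The main obstacle I anticipate is purely bookkeeping rather than conceptual: making the passage from $\mathrm{Var}(\NRM Y)$ to a tail bound on $\NRM Y^2-r$ uniform in both $n$ and in the choice of projection $\proj$. Uniformity in $\proj$ is automatic because the thin-shell constant and Paouris' constant are absolute (they do not depend on the particular isotropic log-concave law, only on the dimension $r$), and the projection of a log-concave isotropic vector is again log-concave and isotropic on its range — so the only quantity that matters is $r=\rank\proj$, and every estimate above is phrased solely in terms of $r$. The one place requiring a little care is the lower tail of $\NRM Y$ near $0$, to make sure the event $\NRM Y^2-r\le -t$ is also covered; this follows from the fact that an isotropic log-concave vector in $\dR^r$ satisfies $\dP(\NRM Y\le\delta\sqrt r)\le(C\delta)^{c r}$, which is far stronger than needed. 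I would cite these facts from \cite{MR3185453} (and the original Paouris paper) and keep the quantitative dependence on $r$ only as precise as the $t^{-2}$ target demands.
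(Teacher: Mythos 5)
There is a genuine gap in your central Chebyshev step. You assert that the thin-shell estimate gives $\mathrm{Var}(\NRM{Y})\le C\sigma_r^2$ with $\sigma_r\to0$, and conclude $\dP(|\NRM{Y}^2-r|\ge t)\le 9Cr\sigma_r^2/t^2$, i.e.\ the target bound with $g(r)=9C\sigma_r^2\to0$. But $\mathrm{Var}(\NRM{Y})$ does not tend to $0$: for the standard Gaussian on $\dR^r$ it tends to $1/2$, the thin-shell \emph{conjecture} itself only asserts $\mathrm{Var}(\NRM{Y})\le C$, and the known estimates (Guédon--Milman) give $\mathrm{Var}(\NRM{Y})=O(r^{2/3})=o(r)$, not $o(1)$. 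So your Chebyshev bound is $9r\,\mathrm{Var}(\NRM{Y})/t^2$, which even in the best case is $\asymp r/t^2$ (Gaussian: $\mathrm{Var}(\NRM{Y}^2)=2r$), i.e.\ it yields $g(r)\asymp1$ rather than $g(r)\to0$. No second-moment argument can close this: \eqref{eq:STP} demands a numerator that is $o(r)$, while $\mathrm{Var}(\NRM{Y}^2)$ is of order at least $r$. A secondary issue is the lower tail in the moderate range $f(r)r\le t\le r/2$: the small-ball estimate $\dP(\NRM{Y}\le\delta\sqrt r)\le(C\delta)^{cr}$ only covers $\delta$ bounded away from $1$, hence $t\gtrsim r$, so it does not substitute for a genuine lower-deviation inequality there.

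The statement is true, and the repair is to use that $t\ge f(r)r$ is a large-deviation scale at which the probability is much smaller than Chebyshev suggests. The paper's proof applies the Guédon--Milman inequality $\dP(|\NRM{Y}-\sqrt r|\ge u\sqrt r)\le C\exp(-c\sqrt r\min(u,u^3))$ --- a single two-sided bound packaging both the thin-shell and the Paouris phenomena at all scales, including the moderate lower deviations your outline misses --- and then converts the exponential into a polynomial via $e^{-2s}\le s^{-4}$. This produces a bound of fourth-moment type, roughly $r^{c}/t^{4}$ with $c<1$ after tuning parameters, which beats $o(r)/t^{2}$ precisely because $t^{2}\ge f(r)^{2}r^{2}\gg r$. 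The minimal change to your plan is therefore to replace $\mathrm{Var}(\NRM{Y}^2)$ by $\dE(\NRM{Y}^2-r)^4$ (or use the exponential tail directly) in the near-shell regime; the decomposition into a near-shell event and a far event handled by Paouris can then be retained.
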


The next proposition implies that
if ${((X_n)_k)}_{1\leq k\leq n,n\in\dN}$ are i.i.d.\ then property
$\mathrm{(WTP)}$ holds, and if moreover these i.i.d.\ random variables have
finite fourth moment then property \eqref{eq:STP} holds.

\begin{proposition}[Random vectors with i.i.d.\ coordinates]
  \label{pr:iid}
  Let $X_n$, $n\in\dN$, be as in \eqref{eq:Xn}, and suppose that the
  coordinates ${((X_n)_k)}_{1\leq k\leq n,n\in\dN}$ are i.i.d.\
  distributed as a real random variable $\xi$ with zero mean and unit
  variance. Then, denoting $r:=\rank\proj$,
  \begin{itemize}
  \item a stronger version of $\mathrm{(WTP)}$ holds, with, in
    \eqref{eq:WTP-b},
    \[
    \{\|\proj X_n\|^2-r\geq f(r)r\}%
    \quad\text{replaced by}\quad%
    \{|\|\proj X_n\|^2-r|\geq f(r)r\};
    \]
  \item if moreover $\dE(\xi^4)<\infty$ then a stronger version of
    \eqref{eq:STP} holds, with
    \[
    \{\|\proj X_n\|^2-r\geq t\}%
    \quad\text{replaced by}\quad%
    \{|\|\proj X_n\|^2-r|\geq t\}.
    \]
  \end{itemize}
\end{proposition}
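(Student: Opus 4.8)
The plan is to reduce everything to one-dimensional statements about the coordinates of $\proj X_n$. Write $\proj X_n=\sum_{k=1}^n (X_n)_k \proj e_k$, so that $\NRM{\proj X_n}^2=\sum_{i,k}(X_n)_i(X_n)_k\DOT{\proj e_i,\proj e_k}$; since the coordinates are i.i.d.\ with mean $0$ and variance $1$, and since the diagonal of $\proj$ sums to $r=\rank\proj=\trace\proj$ while $\NRM{\proj}_{HS}^2=r$ too, one computes $\dE\NRM{\proj X_n}^2=r$ and the fluctuation $Y:=\NRM{\proj X_n}^2-r$ is a centered quadratic form in the i.i.d.\ variables $\xi_k:=(X_n)_k$. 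The natural route is then a variance (second-moment) bound on $Y$ together with Chebyshev's inequality, which immediately yields a bound of the form $\dP(|Y|\ge t)\le \dE(Y^2)/t^2$, matching the shape of both the strengthened \eqref{eq:STP} and the strengthened \eqref{eq:WTP-b}.

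First I would set up the algebra: denoting $P=(p_{ik})$ the matrix of $\proj$, one has $Y=\sum_{k}(\xi_k^2-1)p_{kk}+\sum_{i\ne k}\xi_i\xi_k p_{ik}$ (using $P=P^\top=P^2$), and these two sums are orthogonal in $L^2$. Hence $\dE(Y^2)=(\dE(\xi^2-1)^2)\sum_k p_{kk}^2 + (\dE\xi^2)^2\sum_{i\ne k}p_{ik}^2 \le C\bigl(\sum_k p_{kk}^2+\sum_{i\ne k}p_{ik}^2\bigr)=C\,\NRM{P}_{HS}^2=Cr$, where $C$ depends only on $\dE(\xi^4)$ (which is finite in the second bullet). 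This gives $\dP(|Y|\ge t)\le Cr/t^2$ for all $t>0$, which is exactly \eqref{eq:STP} in its symmetric form with $g\equiv C$ constant and $f$ arbitrarily small — in fact one even gets it for all $t$, not just $t\ge f(r)r$. So the fourth-moment case is essentially a one-line variance computation once the quadratic-form decomposition is in place.

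For the first bullet (the strengthened $\mathrm{(WTP)}$ with only $\dE\xi^2=1$ assumed, no fourth moment), the obstacle is precisely that $\dE(\xi^4)$ may be infinite, so the crude variance bound breaks down. The standard fix is truncation: fix a level $M$ and split $\xi_k=\xi_k'+\xi_k''$ with $\xi_k'=\xi_k\IND_{\{|\xi_k|\le M\}}$ (re-centered). The contribution of the truncated part $\xi'$ to $Y$ has variance $\le C_M r$ with $C_M=C_M(\dE(\xi'^4))<\infty$, so by Chebyshev it is $\le f(r)^2 r^2/4$ with probability $\to1$ once $f(r)\to0$ slowly enough relative to how $M=M(r)\to\infty$ is chosen. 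The contribution of the tail part $\xi''$ is controlled in $L^1$: $\dE\bigl|\NRM{\proj X_n'}^2-\NRM{\proj X_n}^2\bigr|$ and the cross terms are bounded by quantities of order $r\cdot o_M(1)$ using $\dE(\xi''^2)=o_M(1)\to0$ as $M\to\infty$ (this is where uniform integrability of $\xi^2$, automatic for a fixed distribution, enters), plus $\dE|\xi''|\cdot\dE|\xi'|$-type cross terms; then Markov's inequality finishes it. Choosing $M=M(r)$ tending to infinity slowly, and $f(r),g(r)$ accordingly, one produces the required $f(r)\to0$, $g(r)\to0$. The main delicacy is bookkeeping the diagonal-versus-off-diagonal cross terms between $\xi'$ and $\xi''$ and verifying each is $o(r^2)$ in expectation uniformly in $n$ and in $\proj$; this uses only $\sum_k p_{kk}\le r$, $\sum_{i,k}p_{ik}^2=r$, $\ABS{p_{ik}}\le1$, and the Cauchy--Schwarz inequality, so it is routine but must be done carefully. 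I expect this truncation bookkeeping to be the main — though still elementary — obstacle; the log-concave Proposition \ref{pr:logconc} would instead be proved by the same quadratic-form reduction combined with the thin-shell concentration inequality and sub-exponential tail bounds available for log-concave isotropic vectors, which directly give the stronger two-sided tail.
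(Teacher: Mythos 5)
Your reduction to the quadratic form $Y=\sum_k(\xi_k^2-1)p_{kk}+\sum_{i\neq k}\xi_i\xi_k p_{ik}$ is the right starting point and matches the paper's own splitting into the diagonal part $\DOT{X,(\proj-\proj_0)X}-r$ and the off-diagonal part $\DOT{X,\proj_0X}$. Your truncation argument for the diagonal part together with the second-moment bound for the off-diagonal part does yield the strengthened \eqref{eq:WTP-b} (you are in effect reproving the weak law for weighted sums that the paper imports from \cite{tikhomirov-asymptotic}). But there are two genuine gaps.

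First, for the second bullet, the conclusion $\dP(|Y|\ge t)\le Cr/t^2$ with ``$g\equiv C$ constant'' does \emph{not} establish \eqref{eq:STP}: the definition requires $g(r)\to0$ as $r\to\infty$, i.e.\ a bound of the form $o(r)\,t^{-2}$, not $O(r)\,t^{-2}$. This is not a technicality --- the extra $o(1)$ factor is exactly what separates \eqref{eq:STP} from the Srivastava--Vershynin regularity condition and what produces the sharp edge $(1+\sqrt{\rho})^2$ rather than $(1+c^+\sqrt{\rho})^2$. A pure variance/Chebyshev bound cannot produce it. The paper instead bounds the off-diagonal part by a decoupled \emph{fourth}-moment estimate $\dE(\DOT{X,\proj_0X}^4)\le (CBr)^2$, so that Markov gives $(CBr)^2t^{-4}$, which is $o(r)t^{-2}$ once $t\ge f(r)r$ with $f(r)=r^{-\alpha}$, $\alpha<1/2$; and it handles the diagonal part by symmetrization, truncation of $\xi_k^2-\widetilde\xi_k^2$ at level $r^{1/4}$, Hoeffding's inequality for the bounded piece, and a variance bound $r\,h(r^{1/4})$ with $h\to0$ for the unbounded piece --- it is this last factor, extracted from the integrability of $\xi^4$, that supplies the required $o(1)$.

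Second, the first bullet asserts the full property $\mathrm{(WTP)}$, which includes the uniform integrability \eqref{eq:WTP-a} of the family $\DOT{X_n,y}^2$ over \emph{all} $n$ and all unit vectors $y$. Your proposal does not address this at all; the ``uniform integrability of $\xi^2$'' you invoke concerns a single fixed random variable and is much weaker. When $\dE(\xi^4)=\infty$ the uniformity over all one-dimensional marginals is genuinely nontrivial: the paper proves it by splitting $y$ into a sparse part (few large coordinates, handled via the integrability of $\xi^2$ and Markov) and a spread part with small $\ell^\infty$ norm, the latter being controlled by a uniform quantitative central limit theorem (Lemma~\ref{le:UQCLT}). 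Some argument of this kind is needed; it does not follow from the quadratic-form computation.
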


Our main results are stated in the following couple of theorems and corollaries.

\begin{theorem}[Smallest eigenvalue]\label{th:min} %
  Let $X_n$, $m_n$, and $A_n$, $n\in\dN$, be as in \eqref{eq:Xn},
  \eqref{eq:mn}, and \eqref{eq:A} respectively. If $\overline{\rho}<1$ (in
  particular $m_n> n$ for $n\gg1$), and if \eqref{eq:WTP-a} and
  \eqref{eq:WTP-b} are satisfied then
  \[
  \liminf_{n\to\infty}\frac{\dE(\lambda_{\min}(A_n))}{(\sqrt{m_n}-\sqrt{n})^2}\ge 1.
  \]
\end{theorem}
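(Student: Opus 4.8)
The plan is to push the randomized rank-one update method of Srivastava--Vershynin all the way to the left edge of the Marchenko--Pastur law; this is exactly the kind of argument that produces an inequality between expectations. Write $m:=m_n$, $v_k:=X_n^{(k)}$, build $A_n$ by $A_0:=0$ and $A_{j+1}:=A_j+v_{j+1}v_{j+1}^\top$, and for $u<\lambda_{\min}(A)$ use the Batson--Spielman--Srivastava lower potential $\Phi^u(A):=\trace((A-uI_n)^{-1})=\sum_i(\lambda_i(A)-u)^{-1}$. Fix a level $\Delta=\Delta_n\asymp\sqrt{n/m}\,/(1-\sqrt{n/m})$, put $u_0:=-n/\Delta$ so that $\Phi^{u_0}(A_0)=\Delta$, and let $u_{j+1}$ be the largest $u\ge u_j$ with $u<\lambda_{\min}(A_{j+1})$ and $\Phi^u(A_{j+1})\le\Delta$. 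Since adding a positive semidefinite rank-one term lowers $\Phi^u$ at fixed $u$ and cannot lower $\lambda_{\min}$, this set contains $u_j$, the $u_j$ increase, and $\Phi^{u_j}(A_j)=\Delta$ for every $j$ because $\Phi^u(A_j)\to\infty$ as $u\uparrow\lambda_{\min}(A_j)$. Hence \emph{pathwise} $\lambda_{\min}(A_n)-u_m\ge1/\Phi^{u_m}(A_m)=1/\Delta$, so
\[
\dE\lambda_{\min}(A_n)\ \ge\ \dE u_m\ =\ -\frac n\Delta+\sum_{j=0}^{m-1}\dE[\delta_j],\qquad\delta_j:=u_{j+1}-u_j\ge0.
\]
With this $\Delta$ one has $n/\Delta=\sqrt{nm}-n$ and $1/(1+\Delta)=1-\sqrt{n/m}$, so if $\dE[\delta_j\mid A_j]\ge(1-o(1))/(1+\Delta)$ for all but a negligible set of $j$, then $\dE u_m\ge-(\sqrt{nm}-n)+(1-\sqrt{n/m})(1-o(1))m=(\sqrt m-\sqrt n)^2(1-o(1))$, the hypothesis $\overline{\rho}<1$ ensuring $(\sqrt{m_n}-\sqrt n)^2\asymp m_n\to\infty$. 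Everything thus reduces to the per-step bound.

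Conditionally on $A_j$, set $B:=A_j-u_jI_n$, so $\trace(B^{-1})=\Phi^{u_j}(A_j)=\Delta$. Sherman--Morrison gives, where $A_j-uI_n$ is invertible,
\[
\Phi^u(A_{j+1})=\Phi^u(A_j)-\frac{v_{j+1}^\top(A_j-uI_n)^{-2}v_{j+1}}{1+v_{j+1}^\top(A_j-uI_n)^{-1}v_{j+1}},
\]
and together with $\Phi^{u+\delta}(A_j)-\Phi^u(A_j)=\delta\,\trace((A_j-uI_n)^{-1}(A_j-(u+\delta)I_n)^{-1})$ this shows that $u_{j+1}$ can be taken as large as any $u_j+\delta$ for which the right-hand side has climbed back to $\Delta$. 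Letting $\delta\to0$ and using $\dE[v_{j+1}^\top M v_{j+1}\mid A_j]=\trace(M)$, the target becomes
\[
\dE[\delta_j\mid A_j]\ \gtrsim\ \frac1{\trace(B^{-2})}\,\dE\!\left[\frac{v_{j+1}^\top B^{-2}v_{j+1}}{1+v_{j+1}^\top B^{-1}v_{j+1}}\,\Big|\,A_j\right]\ \ge\ \frac{1-o(1)}{1+\Delta},
\]
the last inequality following if $v_{j+1}^\top B^{-1}v_{j+1}$ does not substantially exceed its conditional mean $\trace(B^{-1})=\Delta$ with probability $1-o(1)$: on that event $1/(1+v^\top B^{-1}v)\ge(1-o(1))/(1+\Delta)$, while off it the summand is nonnegative and the lost mass $\dE[v^\top B^{-2}v\,\IND_{\mathrm{bad}}]$ is negligible, as discussed below. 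One keeps the barrier $u_j$ a definite fraction of $j$ below $\lambda_{\min}(A_j)$ for every $j$ except the last $O(\sqrt n)$ of them (where $u_j$ approaches $\lambda_{\min}(A_j)$), on which short stretch one only uses $\dE[\delta_j]\ge0$, at a total cost $O(\sqrt n)=o((\sqrt m-\sqrt n)^2)$.

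The crux is therefore the concentration $v^\top B^{-1}v\le(1+o(1))\trace(B^{-1})$, and this is the one place the structure of $X_n$ enters. Diagonalizing $B^{-1}=\sum_k\nu_k e_ke_k^\top$ with $\nu_1\ge\cdots\ge\nu_n\ge0$ and writing $\proj_r$ for the orthogonal projection onto $\mathrm{span}(e_1,\dots,e_r)$, a layer-cake summation gives $v^\top B^{-1}v-\trace(B^{-1})=\sum_{r=1}^n\bigl(\NRM{\proj_rv}^2-r\bigr)(\nu_r-\nu_{r+1})$ with $\nu_{n+1}:=0$, a weighted sum of the centered variables $\NRM{\proj_rv}^2-r$. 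Property \eqref{eq:WTP-b} controls the upper tail of each $\NRM{\proj_rv}^2$ and, since $\NRM{\proj_rv}^2\ge0$ has mean $r$, yields a bound on $\dE[(\NRM{\proj_rv}^2-r)^2]$; summing, the variance of $v^\top B^{-1}v$ is at most a constant times $\nu_1\trace(B^{-1})$ plus lower-order terms, which is $o(\trace(B^{-1})^2)$ precisely because (off the final stretch) $\nu_1=(\lambda_{\min}(A_j)-u_j)^{-1}$ is of order $1/j$, hence $o(1)=o(\trace(B^{-1}))$, as the barrier sits a definite fraction of $j$ below the edge. Chebyshev then gives the concentration with exceptional probability $o(1)$ (a finite sum of $g$-values at large ranks, by \eqref{eq:WTP-b}), and the residual $\dE[v^\top B^{-2}v\,\IND_{\mathrm{bad}}]=\sum_k\nu_k^2\,\dE[\DOT{v,e_k}^2\IND_{\mathrm{bad}}]$ is $o(\trace(B^{-2}))$ after splitting each $\DOT{v,e_k}^2$ at a level $M$, the part above $M$ being uniformly small by \eqref{eq:WTP-a} and the part below contributing $M\cdot\dP(\mathrm{bad})$. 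A preliminary truncation of $v$ makes the second-moment estimates meaningful under the mere uniform integrability of \eqref{eq:WTP-a}, the error being absorbed since $\lambda_{\min}(A_n)\ge0$ and the truncation level may grow with $n$.

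I expect the hardest point to be exactly this concentration estimate with the sharp constant: a crude ``bound each dyadic block of $\{\nu_k\}$ by a projection'' argument would cost a multiplicative factor the scheme above cannot afford, so one genuinely needs the centered second-moment computation, carried out uniformly in $n$ and with all error terms summing to $o((\sqrt m-\sqrt n)^2)$, while keeping $\Delta$ and the barrier trajectory in the narrow window that drives $u_m$ to the true edge and absorbs the offset $u_0=-n/\Delta$. A secondary nuisance is that $v_{j+1}$ may lift $\lambda_{\min}$ past the range where the Sherman--Morrison expansion around $A_j$ is valid; that part of the range only helps the lower bound on $\delta_j$ and should be isolated rather than estimated.
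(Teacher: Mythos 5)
Your overall scheme is the same as the paper's: the lower barrier potential $\underline{m}_A(u)=\trace((A-uI_n)^{-1})$, the Sherman--Morrison rank-one update, the starting point $u_0=n-\sqrt{mn}$ with potential level $n/(\sqrt{mn}-n)$, a per-step expected gain of $(1-o(1))(1-\sqrt{n/m})$, and the conclusion $\dE\lambda_{\min}(A_n)\ge\dE u_m$. The arithmetic identifying the edge is correct. The problem is the step you yourself flag as the crux, and it does not go through as written.

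First, you claim that \eqref{eq:WTP-b}, together with nonnegativity and $\dE\|\proj_r v\|^2=r$, ``yields a bound on $\dE[(\|\proj_r v\|^2-r)^2]$.'' It does not: \eqref{eq:WTP-b} bounds a single tail probability at the single threshold $f(r)r$ and says nothing about the upper tail beyond it, so the second moment can be infinite. This is not a corner case --- it is exactly the regime the theorem is designed for (i.i.d.\ coordinates with finite variance but infinite fourth moment, where $\langle v,e_k\rangle^2$ has infinite variance). Hence the Chebyshev/variance route to the concentration $v^\top B^{-1}v\le(1+o(1))\trace(B^{-1})$ is unavailable, and your proposed ``preliminary truncation of $v$'' is not a repair (truncating coordinates destroys isotropy and centering; the paper instead truncates only inside the one-sided quantity $q_2$ defining the shift, so that the loss is controlled purely by the uniform integrability \eqref{eq:WTP-a} via $h(1/\veps)$). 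The paper's Lemma~\ref{le:lb:q_1:control} avoids second moments entirely: it partitions the spectrum of $B$ into multiplicative level sets of width $e^{\veps/6}$, applies \eqref{eq:WTP-b} directly to the projection onto each populated level set, and uses only first-moment Markov bounds for the sparse level sets and the extreme parts of the spectrum.

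Second, even granting a variance bound of the form $\nu_1\trace(B^{-1})$, your argument needs $\nu_1=(\lambda_{\min}(A_j)-u_j)^{-1}=o(1)$ (indeed you need control of the whole profile $(\nu_k)$, not just $\nu_1$), and your construction provides no mechanism for this. The greedy choice ``largest $u$ with $\Phi^u(A_{j+1})\le\Delta$'' only guarantees $\lambda_{\min}(A_j)-u_j\ge1/\Delta$, a constant; the assertion that the gap is of order $j$ except on the last $O(\sqrt n)$ steps is unsubstantiated, and on an atypical trajectory the barrier can hug the bottom eigenvalue from early on. This is precisely the role of the paper's regularity shifts $\delta_R^k$: they enforce the local density condition \eqref{eq:lb:small:differ} (which in particular forces $\lambda_{\min}-u\gtrsim\sqrt{\veps n}$ and, more importantly, prevents any small group of eigenvalues from carrying a non-negligible fraction of $\underline{m}_A(u)$), and Lemma~\ref{le:lb:deltaC} shows via a telescoping argument on $\underline{m}$ that their cumulative cost is at most $\veps n^2/(\sqrt{mn}-n)=o(m)$. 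Without some analogue of this regularization, the concentration you need fails on exactly the trajectories that matter, so the proof as proposed has a genuine gap at its central step.
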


Theorem \ref{th:min} is proved in Section \ref{se:min}. 

Combining Theorem \ref{th:min} with Proposition \ref{pr:logconc} and
Proposition \ref{pr:iid}, we obtain the following corollary. The second
part, which is the Bai--Yin edge convergence \eqref{eq:BY} of the smallest
eigenvalue in probability, is obtained by combining the first part of the
corollary with the Marchenko--Pastur bound \eqref{eq:MPedge} for the smallest
eigenvalue.

\begin{corollary}[Smallest eigenvalue convergence]\label{co:max}
  Let $X_n$, $m_n$, $\widehat\Sigma_n$, and $A_n$, $n\in\dN$, be as in
  \eqref{eq:Xn}, \eqref{eq:mn}, \eqref{eq:Sigman}, and \eqref{eq:A}
  respectively. If $\overline{\rho}<1$ (in particular $m_n> n$ for $n\gg1$)
  and if the centered isotropic random vector $X_n$ is log-concave for every
  $n\in\dN$ or if ${((X_n)_k)}_{1\leq k\leq n,n\in\dN}$ are i.i.d.\ then
  \[
  \liminf_{n\to\infty}\frac{\dE(\lambda_{\min}(A_n))}{(\sqrt{m_n}-\sqrt{n})^2}\ge 1.
  \]
  If additionally $\lim_{n\to\infty}\frac{n}{m_n}=\rho$ with $\rho\in(0,1)$,
  in other words $\underline{\rho}=\overline{\rho}\in(0,1)$, then
  \[
  \lambda_{\min}(\widehat\Sigma_n)
  \overset{\dP}{\underset{n\to\infty}{\longrightarrow}}
  (1-\sqrt{\rho})^2.
  \]
\end{corollary}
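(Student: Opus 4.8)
Fix a large $n$ and abbreviate $m:=m_n$, $\rho:=n/m_n$, $X^{(k)}:=X_n^{(k)}$; by \eqref{eq:mn} and $\overline\rho<1$ we may and do assume $\rho$ stays in a fixed compact subinterval of $(0,1)$, and it suffices to prove $\liminf_n\dE\lambda_{\min}(A_n)/(\sqrt m-\sqrt n)^2\ge1$. The engine is the lower barrier potential: for symmetric $A$ and $u<\lambda_{\min}(A)$ set $\Phi^u(A):=\trace(A-uI)^{-1}>0$, so $\lambda_{\min}(A)\ge u+1/\Phi^u(A)>u$. By the Sherman--Morrison formula, writing $M:=(A-uI)^{-1}$, for any $v\in\dR^n$
\[
\Phi^u(A+v\otimes v)=\Phi^u(A)-\frac{v^\top M^2v}{1+v^\top Mv}\le\Phi^u(A),
\]
while $\Phi^{u'}(A)-\Phi^{u}(A)=\int_u^{u'}\trace(A-sI)^{-2}\,ds$ for $u<u'<\lambda_{\min}(A)$, a convex increasing function of $u'$ that tends to $+\infty$ as $u'\uparrow\lambda_{\min}(A)$.

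The plan is to process $X^{(1)},\dots,X^{(m)}$ one at a time with an adaptive barrier. Fix $\phi:=\phi_n:=\frac{\sqrt\rho}{1-\sqrt\rho}$, set $A_0:=0$ and $u_0:=-n/\phi$ (so $\Phi^{u_0}(A_0)=\phi$); given $A_k$ and $u_k<\lambda_{\min}(A_k)$ with $\Phi^{u_k}(A_k)=\phi$, put $A_{k+1}:=A_k+X^{(k+1)}\otimes X^{(k+1)}$ and let $u_{k+1}:=u_k+\delta_{k+1}$ be the unique value with $\delta_{k+1}\ge0$, $u_{k+1}<\lambda_{\min}(A_{k+1})$ and $\Phi^{u_{k+1}}(A_{k+1})=\phi$ --- it exists because adding the rank-one term only lowers the potential and $\Phi^s(A_{k+1})\uparrow\infty$ as $s\uparrow\lambda_{\min}(A_{k+1})$. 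This scheme never fails, so $u_k<\lambda_{\min}(A_k)$ for every $k$, and \emph{deterministically}
\[
\lambda_{\min}(A_n)=\lambda_{\min}(A_m)>u_m=-\frac n\phi+\sum_{k=1}^m\delta_k,
\qquad\text{hence}\qquad
\dE\lambda_{\min}(A_n)\ge-\frac n\phi+\sum_{k=1}^m\dE\delta_k.
\]
The value $\phi$ is the maximizer of $\frac m{1+\phi}-\frac n\phi$ over $\phi>0$, with maximum exactly $(\sqrt m-\sqrt n)^2$. Thus the whole theorem reduces to the one-step bound
\[
\dE[\delta_{k+1}\mid\mathcal F_k]\ \ge\ \frac{1-\eta_n}{1+\phi}\qquad(0\le k<m)
\]
uniformly in the history, with $\eta_n\to0$: then $\dE\lambda_{\min}(A_n)\ge-\frac n\phi+\frac{(1-\eta_n)m}{1+\phi}=(\sqrt m-\sqrt n)^2-\eta_n\frac m{1+\phi}\ge(1-C\eta_n)(\sqrt m-\sqrt n)^2$ with $C$ depending only on the interval containing $\rho$, and letting $n\to\infty$ finishes.

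To prove the one-step bound, write $B_k:=A_k-u_kI\succ 0$, $M_k:=B_k^{-1}$, $v:=X^{(k+1)}$; then $\trace M_k=\Phi^{u_k}(A_k)=\phi$, $\|M_k\|_{\mathrm{op}}\le\phi$, and by isotropy \eqref{eq:Xn} one has $\dE[v^\top M_kv\mid\mathcal F_k]=\phi$ and $\dE[v^\top M_k^2v\mid\mathcal F_k]=\trace M_k^2$. From the defining identity $\int_{u_k}^{u_{k+1}}\trace(A_{k+1}-sI)^{-2}\,ds=\frac{v^\top M_k^2v}{1+v^\top M_kv}$ and the monotonicity of the integrand, $\delta_{k+1}\ge\frac{v^\top M_k^2v}{(1+v^\top M_kv)\,\trace M_{k+1}^2}$. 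It remains to (i) show that the functional $\trace(A-uI)^{-2}$ is essentially conserved by the update, $\trace M_{k+1}^2\le(1+o(1))\trace M_k^2$ --- which follows by comparing the drop caused by adding $v\otimes v$ with the rise caused by the \emph{calibrated} barrier shift, the latter being negligible once $\delta_{k+1}$ is known to be small relative to the spectral gap $\lambda_{\min}(A_{k+1})-u_{k+1}$ --- and (ii) replace $v^\top M_kv$, $v^\top M_k^2v$ by their means $\phi$, $\trace M_k^2$ with a vanishing relative error. Step (ii) is where the hypotheses enter: decomposing $M_k$ orthogonally into the part $M_k^{\mathrm{hi}}$ on eigenvalues $\ge\theta$ and the part $M_k^{\mathrm{lo}}$ on eigenvalues $<\theta$, for a slowly decaying $\theta=\theta_n$, the range of $M_k^{\mathrm{hi}}$ has dimension $\le\phi/\theta$ so that $v^\top M_k^{\mathrm{hi}}v$ --- which carries the heavy part of the quadratic forms --- is controlled through the uniform integrability \eqref{eq:WTP-a}, while $M_k^{\mathrm{lo}}$ has operator norm $\le\theta$ and, written as a nonnegative combination of orthogonal projections of growing ranks, $v^\top M_k^{\mathrm{lo}}v$ concentrates around $\trace M_k^{\mathrm{lo}}$ with probability $1-o(1)$ by \eqref{eq:WTP-b}; balancing $\theta_n\to0$ against the rates in \eqref{eq:WTP-a}--\eqref{eq:WTP-b} produces $\eta_n\to0$.

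I expect the main obstacle to be this one-step estimate, and inside it the conservation in (i) rather than the quadratic-form concentration in (ii) (which is exactly what \eqref{eq:WTP-a} and \eqref{eq:WTP-b} were designed for): the crude bound $\trace M_{k+1}^2\le\phi^2$ is far too lossy and would only reproduce a Srivastava--Vershynin type constant $c^-\neq1$, so one genuinely needs to track that the barrier increment, being pinned down by $\Phi$, scarcely moves $\trace(A-uI)^{-2}$ --- and to do this for every $k$, with error uniform in $n$ and in the random history, including in regimes where the spectrum of $A_k$ is poorly spread out.
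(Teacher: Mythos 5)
The statement you are proving is a corollary whose intended proof is two lines: Propositions~\ref{pr:logconc} and~\ref{pr:iid} verify \eqref{eq:WTP-a}--\eqref{eq:WTP-b} for the two classes of distributions, Theorem~\ref{th:min} then gives the $\liminf$ bound, and the convergence in probability follows by combining that bound with the Marchenko--Pastur inequality \eqref{eq:MPedge}. You have instead set out to re-prove the content of Theorem~\ref{th:min} itself by an exact-calibration barrier scheme, and you have omitted the second half of the corollary entirely: nowhere do you use \eqref{eq:MPedge}, and passing from $\dE\lambda_{\min}(\widehat\Sigma_n)\geq(1-\sqrt\rho)^2-o(1)$ together with the a.s.\ upper bound to convergence in probability still needs a short argument (one must control $\dE(\lambda_{\min}\IND_{\{\lambda_{\min}\ \mathrm{large}\}})$, e.g.\ via $\lambda_{\min}\le\frac1n\trace\widehat\Sigma_n$).

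More seriously, your one-step estimate has two genuine gaps, the first of which you correctly flag but do not resolve. (i) With $\Phi^{u_k}(A_k)\equiv\phi$ the only gap information you get is $\lambda_i-u_k\ge 1/\phi$, a constant, while the typical increment is $\delta_{k+1}\approx 1/(1+\phi)$, also a constant of the same order; hence the rise of $\trace(A_{k+1}-u)^{-2}$ caused by the calibrated shift is a constant multiplicative factor (of order $e^{2\delta\phi}$), not $1+o(1)$, and it would have to be cancelled almost exactly by the drop coming from the rank-one addition. No mechanism forces such cancellation pathwise, and the denominator $\trace M_{k+1}^2$ in your bound is moreover correlated with the numerator through $X^{(k+1)}$. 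The paper avoids this problem entirely: in Lemmas~\ref{le:q1q2} and~\ref{le:de} the sufficient condition reads $\delta(1+q_1(\delta))\le q_2(\delta)$ where $q_2$ is the quadratic form \emph{normalized by the trace evaluated at the same shifted point}, so the relevant quantity is a weighted average of $\DOT{X,x_i}^2$ with weights summing to one, the shift is capped at $1/\veps$, and the only trace comparison needed is over a displacement $1/\veps$ against a gap forced to be $\gtrsim\veps^{-2}$. (ii) Your concentration step for $v^\top M_kv$ requires splitting the spectrum into level sets on which \eqref{eq:WTP-b} applies; but \eqref{eq:WTP-b} is only useful for projections of large rank, and small clusters of eigenvalues near $u_k$ can carry non-negligible weight unless one has a quantitative regularity condition such as \eqref{eq:lb:small:differ}. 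Your calibration provides nothing of the sort; the paper manufactures it with the regularity shifts $\delta_R^k$ and pays for them via Lemma~\ref{le:lb:deltaC}. Without analogues of both devices your plan cannot produce the sharp constant $1$ and would at best reproduce a Srivastava--Vershynin-type bound.
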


\begin{theorem}[Largest eigenvalue]\label{th:max}
  Let $X_n$, $m_n$, and $A_n$, $n\in\dN$, be as in \eqref{eq:Xn},
  \eqref{eq:mn}, and \eqref{eq:A} respectively. If \eqref{eq:STP} holds then
  \[
  \limsup_{n\to\infty}\frac{\dE(\lambda_{\max}(A))}{(\sqrt{m_n}+\sqrt{n})^2}\le 1.
  \]
\end{theorem}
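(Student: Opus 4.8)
The strategy follows the rank-one update philosophy of Srivastava and Vershynin, but tuned to extract the sharp Bai--Yin constant rather than a multiplicative one. The plan is to construct, along the insertion of the $m_n$ summands $X_n^{(k)}\otimes X_n^{(k)}$ one at a time, a \emph{barrier potential} that controls the largest eigenvalue of the running partial sum from above. Concretely, fix a target location $u>(\sqrt{m_n}+\sqrt{n})^2$ (eventually we let $u$ approach this edge as $n\to\infty$), and for a symmetric positive semidefinite matrix $B$ with $\lambda_{\max}(B)<u$ define the upper potential $\Phi^u(B):=\trace\,(uI_n-B)^{-1}$, exactly as in the Batson--Spielman--Srivastava framework. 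The point is that $\Phi^u(B)$ blows up as $\lambda_{\max}(B)\uparrow u$, so keeping $\Phi^u$ bounded along the whole process keeps $\lambda_{\max}$ bounded away from $u$; and the barrier location is allowed to move, i.e. we pass from $\Phi^{u_k}(A_k)$ to $\Phi^{u_{k+1}}(A_{k+1})$ with $u_{k+1}=u_k+\delta$ for a carefully chosen increment $\delta$, so that after $m_n$ steps the barrier sits at roughly $(\sqrt{m_n}+\sqrt{n})^2$.

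The key computation is the one-step estimate. Writing $A_{k+1}=A_k+X\otimes X$ with $X$ an independent copy of $X_n$, the Sherman--Morrison formula gives
\[
\Phi^{u+\delta}(A_k+X\otimes X)
=\Phi^{u+\delta}(A_k)
-\frac{\big\langle (({u+\delta})I-A_k)^{-1}X,\,(({u+\delta})I-A_k)^{-1}X\big\rangle}
{1-\big\langle (({u+\delta})I-A_k)^{-1}X,X\big\rangle}.
\]
I would take conditional expectation over $X$ given $A_k$. Both quadratic forms $\langle MX,X\rangle$ with $M=(({u+\delta})I-A_k)^{-1}$ or $M^2$ are, up to normalisation, of the form $\|\proj' X\|^2$ for spectral-projection pieces of $M$, so the STP hypothesis (together with isotropy $\dE\langle MX,X\rangle=\trace M$ and uniform integrability of one-dimensional marginals, which as the excerpt notes follows from STP) lets me replace these random quadratic forms by their means $\trace M$ and $\trace M^2$ up to an error that is $o(1)$ uniformly, \emph{provided} the relevant ranks are large — which is where the condition that $\delta$ not be too small enters, since $\delta$ controls how close $A_k$'s top eigenvalues sit to the barrier and hence the effective rank/scale in the STP bound. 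The upshot should be that, in expectation, $\Phi^{u+\delta}(A_{k+1})$ stays below $\Phi^{u}(A_k)$ as long as $\delta\trace(M^2)\ge \trace M + (\text{small error})$ and the denominator stays bounded below — these are precisely the BSS inequalities, now in an averaged form.

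Then I would run the induction: choose $u_0$, the increments $\delta_k$ (essentially constant, $\delta\approx (\sqrt{m_n}+\sqrt{n})^2/m_n$ up to the correction needed for the edge), verify that $\dE\Phi^{u_k}(A_k)$ stays below a fixed bound for all $k\le m_n$, and conclude that $\dE\lambda_{\max}(A_n)\le u_{m_n}\big(1-c/\dE\Phi^{u_{m_n}}(A_{m_n})\big)^{-1}$-type estimate — more simply, that $\lambda_{\max}(A_n)\le u_{m_n}$ with the potential bounded, and then optimise the free parameters so that $u_{m_n}/(\sqrt{m_n}+\sqrt{n})^2\to1$. A small amount of care is needed at the very start of the process, when ranks are small and STP gives nothing: there one truncates, bounds the contribution of the first $O(\sqrt n)$ or so steps crudely (it contributes only lower-order terms to $u$), and starts the barrier argument once the matrix has ``warmed up''. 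The main obstacle I anticipate is the bookkeeping of two competing scales: to make the constant sharp one must let $\delta$ (equivalently, the gap between consecutive barriers) shrink with $n$ as slowly as possible, while STP only gives usable control when the errors $f(r)r$ and $g(r)r/t^2$ are genuinely small relative to $\delta\trace(M^2)$; balancing these — i.e. showing the admissible window of $\delta$ is nonempty and closes at the right rate — is the crux, and is exactly the place where the ``almost optimal'' (fourth moment, rather than $4+\veps$) nature of the hypothesis is felt.
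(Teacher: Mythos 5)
Your overall framework --- the BSS upper barrier potential $\overline{m}_A(u)=\trace((uI_n-A)^{-1})$, the Sherman--Morrison rank-one update, and a moving barrier whose total displacement after $m_n$ steps is compared to $(\sqrt{m_n}+\sqrt{n})^2$ --- is exactly the paper's framework, but the plan omits the two mechanisms that make the induction close, and as written it would fail. First, your barrier increments appear to be essentially deterministic, with only $\dE\,\Phi^{u_k}(A_k)$ kept bounded. Under \eqref{eq:STP} the vectors have barely four moments, so with positive probability a single sample has $\|X^{(k)}\|^2\gg u_k$ and $\lambda_{\max}(A^{(k)})$ jumps past any deterministic barrier; the potential then ceases to make sense, and a bound on its expectation yields nothing about $\dE\lambda_{\max}$. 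The paper inverts the quantifiers: the shift is a \emph{random variable}, a function of $X^{(k)}$ (Lemmas~\ref{le:Q1} and~\ref{le:Q2}), chosen so that the feasibility conditions $Q_1<1$, $Q_2\le 1-Q_1$, hence $\overline{m}_{A^{(k)}}(u_k)\le\overline{m}_{A^{(k-1)}}(u_{k-1})$ and $\lambda_{\max}(A^{(k)})<u_k$, hold \emph{deterministically}; only then is the expectation of the shift estimated. In particular $\Delta_1$ contains the term $\veps^{-1/2}\|X\|^2\IND_{\{\veps\|X\|^2\ge n\}}$ precisely to absorb the heavy-tail events. Incidentally, your Sherman--Morrison identity has the wrong sign (adding $X\otimes X$ \emph{increases} the upper potential), and the one-step balance you state, $\delta\,\trace (M^2)\ge\trace M$, is not the correct BSS inequality; the correct requirement is roughly $\delta\ge(1-\overline{m}_A(u))^{-1}$, which is what produces $u_m\approx u_0+m+\sqrt{mn}=(\sqrt m+\sqrt n)^2$ from $u_0=n+\sqrt{mn}$.

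Second, the step ``STP lets me replace the quadratic forms by their traces up to a uniform $o(1)$'' is the crux and is not justified. The form $X^\top(u+\Delta-A)^{-1}X=\sum_i\langle X,x_i\rangle^2/(u+\Delta-\lambda_i)$ is a \emph{weighted} sum, while \eqref{eq:STP} controls only unweighted projections onto subspaces; the paper bridges the gap via the dyadic level sets $I_j=\{i:4^{j-1}\le u-\lambda_i<4^j\}$, applying STP to each spectral projection $\proj_{I_j}$ separately. For the resulting errors to sum to $O(\sqrt\veps)$ one needs $\max_j|I_j|/16^j\le 1/(\veps n)$, i.e.\ the spectrum of $A^{(k)}$ must not cluster too much just below the barrier. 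This is not automatic: it is enforced at every step by the regularity shift $\Delta_R^k$, whose cumulative cost is then shown to be only $O(\veps n)$ using the monotone decrease of the potential (Lemma~\ref{le:deltaC}). This is the paper's main technical novelty over Srivastava--Vershynin, and your ``warm-up'' device (crudely bounding the first $O(\sqrt n)$ steps) does not substitute for it, since the bad spectral configurations can arise at any step $k$. Without some analogue of the regularity shifts, the concentration step --- and with it the sharp constant --- is out of reach.
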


Theorem \ref{th:max} is proved in Section \ref{se:max}.

Combining Theorem \ref{th:max} with Proposition \ref{pr:logconc} and
Proposition \ref{pr:iid}, we obtain the following statement (again, for
the second part we use the Marchenko--Pastur law):

\begin{corollary}[Largest eigenvalue convergence]\label{co:min}
  Let $X_n$, $m_n$, $\widehat\Sigma_n$ and $A_n$, $,n\in\dN$, be as in
  \eqref{eq:Xn}, \eqref{eq:mn}, \eqref{eq:Sigman}, and \eqref{eq:A}
  respectively. If the centered isotropic random vector $X_n$ is log-concave
  for every $n\in\dN$ or if ${((X_n)_k)}_{1\leq k\leq n,n\in\dN}$ are i.i.d.\
  with finite $4$th moment then
  \[
  \limsup_{n\to\infty}\frac{\dE(\lambda_{\max}(A_n))}{(\sqrt{m_n}+\sqrt{n})^2}\le 1.
  \]
  If additionally $\lim_{n\to\infty}\frac{n}{m_n}=\rho$ with $\rho\in(0,\infty)$
  in other words $\underline{\rho}=\overline{\rho}\in(0,\infty)$, then
  \[
  \lambda_{\max}(\widehat\Sigma_n)
  \overset{\dP}{\underset{n\to\infty}{\longrightarrow}}
  (1+\sqrt{\rho})^2.
  \]
\end{corollary}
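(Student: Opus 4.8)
The plan is to read off the first assertion directly from Theorem~\ref{th:max} and then to upgrade the resulting first--moment bound to convergence in probability using the Marchenko--Pastur edge inequality \eqref{eq:MPedge}. First I would check that under either hypothesis the property \eqref{eq:STP} holds: in the log--concave case this is Proposition~\ref{pr:logconc}, and in the case of i.i.d.\ coordinates with finite fourth moment it is the second item of Proposition~\ref{pr:iid}. Theorem~\ref{th:max} then gives
\[
\limsup_{n\to\infty}\frac{\dE(\lambda_{\max}(A_n))}{(\sqrt{m_n}+\sqrt n)^2}\le 1,
\]
which is exactly the first assertion. For the second assertion, assuming in addition $n/m_n\to\rho\in(0,\infty)$, I would divide by $m_n$: since $A_n=m_n\widehat\Sigma_n$ and $(\sqrt{m_n}+\sqrt n)^2=m_n(1+\sqrt{n/m_n})^2$ with $(1+\sqrt{n/m_n})^2\to(1+\sqrt\rho)^2$, the display above becomes $\limsup_{n\to\infty}\dE(\lambda_{\max}(\widehat\Sigma_n))\le(1+\sqrt\rho)^2=:a^+$.

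Next I would establish the matching lower bound in probability. Under either hypothesis the weak convergence \eqref{eq:MP} holds --- by Pajor--Pastur \cite{MR2539559} in the log--concave case, and by the classical Marchenko--Pastur theorem in the i.i.d.\ case, finite variance being implied by the finite fourth moment --- so \eqref{eq:MPedge} yields $\liminf_{n\to\infty}\lambda_{\max}(\widehat\Sigma_n)\ge a^+$ almost surely; in particular $\dP(\lambda_{\max}(\widehat\Sigma_n)<a^+-\veps)\to 0$ for every $\veps>0$.

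It then remains to show $\dP(\lambda_{\max}(\widehat\Sigma_n)>a^++\delta)\to 0$ for every $\delta>0$. Writing $Y_n:=\lambda_{\max}(\widehat\Sigma_n)\ge 0$, and for $\veps,\delta>0$ setting $p_n:=\dP(Y_n<a^+-\veps)$ and $q_n:=\dP(Y_n>a^++\delta)$, I would split $\dE(Y_n)$ over the events $\{Y_n<a^+-\veps\}$, $\{a^+-\veps\le Y_n\le a^++\delta\}$, $\{Y_n>a^++\delta\}$ and bound the three contributions from below by $0$, by $(a^+-\veps)(1-p_n-q_n)$, and by $(a^++\delta)q_n$, obtaining
\[
(\delta+\veps)\,q_n\le\dE(Y_n)-(a^+-\veps)+(a^+-\veps)\,p_n .
\]
Taking $\limsup_{n\to\infty}$ and using $p_n\to 0$ and $\limsup_n\dE(Y_n)\le a^+$ gives $\limsup_n q_n\le\veps/(\delta+\veps)$; letting $\veps\to 0$ gives $\limsup_n q_n=0$. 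Combined with the lower bound, this is $\lambda_{\max}(\widehat\Sigma_n)\to a^+$ in probability.

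The main difficulty lies entirely in Theorem~\ref{th:max}, on which the corollary rests; within the present argument the one delicate point is that the expectation bound from Theorem~\ref{th:max} must be used \emph{together with} the almost sure lower bound from \eqref{eq:MPedge}, since one--sided control of $\dE(Y_n)$ by itself cannot preclude a small amount of mass of $Y_n$ escaping above $a^+$.
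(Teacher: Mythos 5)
Your proposal is correct and follows the paper's intended route: the first assertion is exactly Theorem~\ref{th:max} combined with Proposition~\ref{pr:logconc} (log-concave case) or the second part of Proposition~\ref{pr:iid} (i.i.d.\ case with finite fourth moment), and the second assertion is obtained, as the paper indicates, by combining the resulting bound on $\limsup_n\dE(\lambda_{\max}(\widehat\Sigma_n))$ with the almost sure lower bound $\liminf_n\lambda_{\max}(\widehat\Sigma_n)\ge(1+\sqrt\rho)^2$ coming from the Marchenko--Pastur law \eqref{eq:MPedge}. Your explicit first-moment splitting argument correctly fills in the elementary step that the paper leaves implicit.
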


\subsection*{Outline of the argument and novelty}

Assume we are given a random matrix
\[
A=\sum_{k=1}^{m}X^{(k)}\otimes X^{(k)},
\]
where $X^{(k)}$ are i.i.d.\ isotropic random vectors. As we already mentioned
above, the key ingredient in estimating the extremal eigenvalues of $A$ is the
following rank one update formula known as the Sherman--Morrison formula:
\[
(M+xx^\top)^{-1}=M^{-1}-\frac{M^{-1}xx^\top M^{-1}}{1+x^\top M^{-1}x},
\]
which is valid for any non-singular $n\times n$ matrix $M$ and a vector $x$
with $1+x^\top M^{-1}x\neq 0$. Using the above identity and assuming that $M$ is
symmetric, the restriction on $\dR$ of the Cauchy--Stieltjes transform of the
spectral distribution of $M+xx^T$, which is defined as an appropriately scaled
trace of $(u-M-xx^\top)^{-1}$, $u\in\dR$, can be in a simple way expressed in
terms of the Cauchy--Stieltjes transform of $M$. To be more precise, setting
\[
A^{(0)}:=0%
\quad\text{and}\quad%
A^{(k)}:=A^{(k-1)}+X^{(k)}\otimes X^{(k)},\quad k=1,\dots,m,
\]
we get, for any $k=1,\dots,m$ and any $u\in\dR$,
\[
\trace (u-A^{(k)})^{-1}=\trace (u-A^{(k-1)})^{-1}
+\frac{{X^{(k)}}^\top(u-A^{(k-1)})^{-2}X^{(k)}}{1-{X^{(k)}}^\top(u-A^{(k-1)})^{-1}X^{(k)}}.
\]
A crucial observation, made already in \cite{MR2780071} and further developed
in \cite{srivastava-vershynin} is that the Cauchy--Stieltjes transform on the
real line can be efficiently used to control the extreme eigenvalues of the
matrix. The basic idea is, starting from some fixed $u_0\neq 0$, to define
inductively a sequence of {\it random} numbers $u_k$ ($k\leq m$) in such a way
that all $u_k$'s stay on the same side of the spectra of $A^{(k)}$'s, at the
same time not departing too far from the spectrum. Then the expectation of the
corresponding extreme eigenvalue of $A=A^{(m)}$ can be estimated by $\dE u_m$. The
increments $u_k-u_{k-1}$ are defined with help of the last formula as certain
functions of $A^{(k-1)}$, $u_{k-1}$ and $X^{(k)}$, and their expectations are
controlled using the information on the distribution of $X^{(k)}$ as well as
certain induction hypothesis. At this level, the approach used in the present
paper is similar to \cite{srivastava-vershynin}.

On the other hand, as our result is asymptotically sharp and covers the
i.i.d.\ case with almost optimal moment conditions, the technical part of our
argument differs significantly from \cite{srivastava-vershynin}. In
particular, we introduce the ``regularity shifts'', which are designed in such
a way that $u_k$'s stay ``sufficiently far'' from the spectrum of $A^{(k)}$'s,
which guarantees validity of certain concentration inequalities, whereas at
the same time not departing ``too far'' so that one still gets a satisfactory
estimate of the expectation of the spectral edges. The shifts (which we denote by $\delta^k_R$ and $\Delta^k_R$)
are defined in Sections~\ref{se:proof of thmin} and~\ref{se:proof of thmax} (see, in
particular, lemmas~\ref{le:lb:deltaC} and~\ref{le:deltaC}).

Let us emphasize once more that the proofs we obtain are much simpler and
shorter than the original combinatorial approach of Bai--Yin and the
analytic approach based on the resolvent (more precisely the Cauchy--Stieltjes
transform on the complex plane outside the real axis), while the class of
distributions covered in our paper is much larger. In
particular, Theorem~\ref{th:min} of the present paper essentially recovers a
recent result \cite{tikhomirov-asymptotic}.
In must be noted, however, that in our paper we replace
convergence almost surely with the weaker convergence in probability. 

\subsection*{Discussion and extensions}

In this note we restrict our analysis to random vectors with real coordinates
because we think that this is simply more adapted to geometric dependence such
as log-concavity. It is likely that the method remains valid for random
vectors with complex entries. The Bai--Yin theorem is also available for
random symmetric matrices (which are the sum of rank two updates which are no
longer positive semidefinite) but it is unclear to us if one can adapt the
method to this situation. One can ask in another direction if the method
remains usable for non-white population covariance matrices, and for the
so-called information plus noise covariance matrices, two models studied at
the edge by Bai and Silverstein \cite{MR1617051,MR2930382} among others. One
can ask finally if the method allows to extract at least the scaling of the
Tracy--Widom fluctuation at the edge. For the Tracy--Widom fluctuation at the
edge of empirical covariance matrices we refer to Johansson
\cite{MR1737991}, %
Johnstone \cite{MR1863961}, %
Borodin and Forrester \cite{MR1986402}, %
Soshnikov \cite{MR1933444}, %
Péché \cite{MR2475670}, %
Feldheim and Sodin \cite{MR2647136}, %
Pillai and Yin \cite{pillai-yin}, %
and references therein. It was shown by Lee and Yin \cite{MR3161313} that for
centered Wigner matrices the finiteness of the fourth moment is more than
enough for the Tracy--Widom fluctuation of the largest eigenvalue. One can ask
the same for the largest eigenvalue of the empirical covariance matrix of
random vectors with i.i.d.\ entries, and one can additionally ask if a finite
second moment is enough for the Tracy--Widom fluctuation of the smallest
eigenvalue.

\subsection*{Structure}

The rest of the article is structured as follows. Section
\ref{se:pr:logconc+iid} provides the proof of Proposition \ref{pr:logconc} and
Proposition \ref{pr:iid}. In Sections~\ref{se:min} and~\ref{se:max} we
prove Theorem \ref{th:min} and Theorem \ref{th:max},
respectively.

\subsection*{Notations}

We set $\NRM{v}:=\sqrt{v_1+\cdots+v_n^2}$ and
$\NRM{v}_\infty:=\max(|v_1|,\ldots,|v_n|)$ for any vector $v\in\dR^n$. We
denote $\NRM{f}_\infty:=\sup_{x\in S}|f(x)|$ for any function $f:S\to\dR$. We
often use the notation $|S|:=\card(S)$ for a set $S$. Further, we denote by
\[
\lambda_{\max}(M):=\lambda_1(M)\geq\cdots\geq\lambda_n(M)=:\lambda_{\min}(M)
\]
the eigenvalues of a symmetric $n\times n$ matrix $M\in\mathcal{M}_n(\dR)$.
We denote by $I_n$ the $n\times n$ identity matrix. For any real number $u$ we
sometimes abridge $uI_n$ into $u$ in matrix expressions such as in
$M-uI_n=M-u$. 




\section{Proof of Proposition \ref{pr:logconc} and Proposition \ref{pr:iid}}
\label{se:pr:logconc+iid}

\begin{proof}[Proof of Proposition \ref{pr:logconc}]
  Assume $X$ is a centered isotropic log-concave vector in $\dR^n$ and $\proj:\dR^n\to\dR^n$ is a non-zero
  orthogonal projection.
  The random vector $\proj X$ is log-concave with mean zero and covariance
  matrix $\proj\proj^\top=\proj^2=\proj$. Restricted to the image of $\proj$,
  the vector $Y=\proj X$ is log-concave with covariance matrix $I_r$
  where $r=\rank \proj$. The so-called thin-shell phenomenon \cite{MR1997580}
  states that ``most'' of the distribution of $Y$ is supported in a thin-shell
  around the Euclidean sphere of radius $\sqrt{r}$. Quantitative estimates
  have been obtained notably by Klartag \cite{MR2311626}, Fleury
  \cite{MR2652173}, Guédon and Milman \cite{MR2846382}, see also Guédon
  \cite{MR3178607}. On the other hand, it is also known that the tail of the
  of norm of $Y$ is sub-exponenial, see Paouris \cite{MR2276533}, and also
  Admaczak et al \cite{MR3150710}. The following inequality, taken from
  \cite[Theorem 1.1]{MR2846382}, captures both phenomena: there exist
  absolute constants $c,C\in(0,\infty)$ such that for any real $u\geq0$,
  \[
  \dP\PAR{\ABS{\NRM{Y}-\sqrt{r}}\geq u\sqrt{r}}
  \leq C\exp\bigr(-c\sqrt{r}\min(u,u^3)\bigr).
  \]
  This is more than enough for our needs. Namely, let $\beta\in(0,1/20)$, and
  let $u=u(r)\in(0,\infty)$ and $\alpha=\alpha(r)\in(0,1)$ be such that
  $\alpha\geq(1+r)^{-\beta}$ and
  $u\geq\max((1+r)^{-\beta},2\alpha/(1-\alpha^2))$. Note that
  $2\alpha/(1-\alpha^2)\in(0,1)$ when $\alpha\in(0,\sqrt{2}-1)$, and that
  $2\alpha/(1-\alpha^2)\to0$ as $\alpha\to0$. Now, using the inequality
  $\exp(-2t)\leq t^{-4}$ for $t>0$, we get, if $\alpha u\in(0,1]$,
  \begin{align*}
  \dP(\NRM{Y}^2-r\geq u^2r)
  &=\dP(\NRM{Y}\geq\sqrt{r+u^2r}) \\
  &\leq\dP(\NRM{Y}\geq\sqrt{r}+\alpha u\sqrt{r}) \\
  &\leq C\exp\bigr(-c\sqrt{r}(\alpha u)^3\bigr) \\
  &\leq \frac{2^4C}{c^4(u^2r)^2\alpha^{12}u^8} = \frac{o(r)}{(u^2r)^2},
  \end{align*}
  while if $\alpha u\in[1,\infty)$ we get 
  \begin{align*}
  \dP(\NRM{Y}^2-r\geq u^2r)
  \leq \cdots
  \leq C\exp\bigr(-c\sqrt{r}\alpha u\bigr) 
  \leq \frac{2^4C}{c^4(u^2r)^2\alpha^4} = \frac{o(r)}{(u^2r)^2}.
  \end{align*}
  Similarly, for an arbitrary $\beta\in(0,1/20)$, let $u=u(r)\in(0,\infty)$
  and $\alpha=\alpha(r)\in(0,1)$ be such that $\alpha\geq(1+r)^{-\beta}$ and
  $u\geq\max((1+r)^{-\beta},2\alpha/(1+\alpha^2))$.
  If $u>1$ then necessarily
  $$\dP(\NRM{Y}^2-r\leq -u^2r)=0.$$
  Otherwise, $\alpha u\in(0,1]$ and using again the inequality
  $\exp(-2t)\leq t^{-4}$ for $t>0$, we get
  \begin{align*}
  \dP(\NRM{Y}^2-r\leq -u^2r)
  &\leq\dP(\NRM{Y}\leq\sqrt{r-u^2r}) \\
  &\leq\dP(\NRM{Y}\leq\sqrt{r}-\alpha u\sqrt{r}) \\
  &\leq C\exp\bigr(-c\sqrt{r}(\alpha u)^3\bigr) \\
  &\leq \frac{2^4C}{c^4(u^2r)^2\alpha^{12}u^{8}} %
  = \frac{o(r)}{(u^2r)^2}.
  \end{align*}

  Thus, we obtain $\dP\bigr(\bigr|\NRM{Y}^2-r\bigr|\geq t\bigr)\leq o(r)t^{-2}$ for
  $t\geq o(r)$ as expected.
\end{proof}

\begin{proof}[Proof of Proposition \ref{pr:iid}] \ %
  \begin{itemize}
  \item\emph{Proof of the first part (uniform integrability
      \eqref{eq:WTP-a}).} Recall that we are given a random variable $\xi$
    with zero mean and unit variance and that for every $n\in\dN$ the
    coordinates of $X_n$ are independent copies of $\xi$. We want to show that
    \[
    \lim_{M\to\infty}\sup_{\substack{n\in\dN\\y\in S^{n-1}}}%
    \dE\bigl(\langle X_n,y\rangle^2\IND_{\{\langle X_n,y\rangle^2\geq M\}}\bigr)=0.
    \]
    For every $x\in\dR^n$, we define $f_n(x):=\DOT{X_n,x}$. Clearly,
    $\dE(f_n^2(x))=\NRM{x}^2$ since $X_n$ is isotropic. Let us start with some
    comments to understand the problem. The random variables
    ${(f_n^2(y))}_{n\in\dN,y\in S^{n-1}}$ belong to the unit sphere of $L^1$.
    If $\xi$ has finite fourth moment $B:=\dE(\xi^4)<\infty$ then by expanding
    and using isotropy we get $\dE(f_n^4(y))\leq\max(B,3)$ which shows that
    the family ${(f_n^2(y))}_{n\in\dN,y\in S^{n-1}}$ is bounded in $L^2$ and
    thus uniformly integrable. How to proceed when $\xi$ does not have a
    finite fourth moment? If $y$ belongs to the canonical basis of $\dR^n$
    then $f_n(y)$ is distributed as $\xi$ and has thus the same integrability.
    On the other hand, if $y$ is far from being sparse, say
    $y=(n^{-1/2},\ldots,n^{-1/2})$, then $f_n(y)$ is distributed as
    $n^{-1/2}(\xi_1+\cdots+\xi_n)$ where the $\xi_i$'s are independent copies
    of $\xi$, which is close in distribution to the standard Gaussian law
    $\mathcal{N}(0,1)$ by the Central Limit Theorem (CLT).

    We will use the following uniform quantitative CLT. Even though it probably
    exists somewhere in the literature, we provide a short proof for
    convenience. It can probably also be proved using the classical Fourier
    analytic smoothing inequality \cite[equation (3.13) XVI.3
    p.~538]{MR0270403} which is the basis of many uniform CLT estimates.

    \begin{lemma}[Uniform quantitative CLT]\label{le:UQCLT}
      Let $\xi$ be a random variable with zero mean and unit variance and
      let $\Phi$ be the cumulative distribution of the standard real Gaussian
      law $\mathcal{N}(0,1)$ of zero mean and unit variance. For any $\veps>0$
      there exists $\delta>0$ depending only on $\veps$ and the law of $\xi$
      with the following property: if $n\in\dN$ and $y\in S^{n-1}$ is such
      that $\NRM{y}_\infty\leq\delta$, then the cumulative distribution
      function $F_n$ of $\sum_{i=1}^ny_i\xi_i$ satisfies
      \[
      \NRM{F_n-\Phi}_\infty\leq\veps.
      \]
    \end{lemma}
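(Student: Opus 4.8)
The plan is to reduce the statement to the classical Berry--Esseen theorem by truncating $\xi$. The issue is that we only assume a finite second moment, whereas Berry--Esseen needs a third moment; this third moment will be manufactured by cutting $\xi$ off at a level $T$ chosen large, depending only on $\veps$ and the law of $\xi$. (One could instead invoke Lindeberg's CLT or the Fourier smoothing inequality, as hinted in the statement, but the truncation route is self-contained modulo citing the classical Berry--Esseen bound.)

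First I would fix $\veps>0$ and, for a threshold $T\geq1$, write each coordinate as $\xi_i=\xi_i'+\xi_i''$, where $\xi_i':=\xi_i\IND_{\ABS{\xi_i}\leq T}-\dE\PAR{\xi_i\IND_{\ABS{\xi_i}\leq T}}$ is the centered truncation, so that $\ABS{\xi_i'}\leq 2T$, and $\xi_i'':=\xi_i-\xi_i'$ is centered with
\[
\dE(\xi_i''^2)=\mathrm{Var}\PAR{\xi_i\IND_{\ABS{\xi_i}>T}}\leq\dE\PAR{\xi^2\IND_{\ABS{\xi}>T}}=:\eta(T),
\]
where $\eta(T)\to0$ as $T\to\infty$ by dominated convergence; moreover $\sigma'^2:=\dE(\xi_1'^2)=\dE\PAR{\xi^2\IND_{\ABS{\xi}\leq T}}-\PAR{\dE(\xi\IND_{\ABS{\xi}\leq T})}^2$ satisfies $1-2\eta(T)\leq\sigma'^2\leq1$. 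Setting $S_n:=\sum_iy_i\xi_i$, $S_n':=\sum_iy_i\xi_i'$ and $S_n'':=\sum_iy_i\xi_i''$, the normalization $\NRM{y}=1$ gives $\dE(S_n''^2)=\sum_iy_i^2\,\dE(\xi_i''^2)\leq\eta(T)$ \emph{uniformly in $n$ and in $y\in S^{n-1}$} --- this uniformity is exactly what working on the sphere buys us --- hence $\dP(\ABS{S_n''}\geq s)\leq\eta(T)/s^2$ for every $s>0$ by Chebyshev's inequality.

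Next I would apply the classical Berry--Esseen theorem to the independent, bounded, centered summands $y_i\xi_i'$. Using $\dE\ABS{\xi_1'}^3\leq 2T\,\dE(\xi_1'^2)=2T\sigma'^2$ and $\sum_i\ABS{y_i}^3\leq\NRM{y}_\infty\sum_iy_i^2=\NRM{y}_\infty$, the Lyapunov ratio satisfies
\[
L_n:=\frac{\sum_i\dE\ABS{y_i\xi_i'}^3}{\PAR{\sum_i\dE(y_i\xi_i')^2}^{3/2}}
=\frac{\dE\ABS{\xi_1'}^3\sum_i\ABS{y_i}^3}{\sigma'^3}
\leq\frac{2T\,\NRM{y}_\infty}{\sigma'},
\]
so $\sup_{x\in\dR}\ABS{\dP(S_n'\leq x)-\Phi(x/\sigma')}\leq C_0L_n$ for an absolute constant $C_0$. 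Comparing $\Phi(\cdot/\sigma')$ with $\Phi$ via the elementary inequality $\sup_x\ABS{\Phi(x/s)-\Phi(x)}\leq(1-s)/(s\sqrt{2\pi e})$ valid for $s\in(0,1]$ (the maximum of $x\mapsto x\phi(x)$ being attained at $x=1$), using $\Phi'\leq1/\sqrt{2\pi}$ to absorb the $\pm s$ shifts, and combining with the deterministic inclusions $\{S_n\leq x\}\subseteq\{S_n'\leq x+s\}\cup\{S_n''<-s\}$ and $\{S_n'\leq x-s\}\cap\{S_n''\leq s\}\subseteq\{S_n\leq x\}$, I obtain
\[
\NRM{F_n-\Phi}_\infty\leq\frac{1-\sigma'}{\sigma'\sqrt{2\pi e}}+\frac{s}{\sigma'\sqrt{2\pi}}+C_0L_n+\frac{\eta(T)}{s^2}.
\]

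Finally I would fix the parameters in the correct order. Since $1-\sigma'\leq1-\sqrt{1-2\eta(T)}\leq2\eta(T)$, one first picks $T=T(\veps,\text{law of }\xi)$ large enough that $\eta(T)\leq1/4$ and the first term is $\leq\veps/4$; then picks $s=s(\veps)$ small enough that the second term is $\leq\veps/4$; then enlarges $T$ if necessary so that $\eta(T)/s^2\leq\veps/4$; and, with $T$ (hence $\sigma'$) now frozen, sets $\delta:=\delta(\veps,T)>0$ small enough that $\NRM{y}_\infty\leq\delta$ forces $C_0L_n\leq 2C_0T\delta/\sigma'\leq\veps/4$. Adding the four terms yields $\NRM{F_n-\Phi}_\infty\leq\veps$ whenever $\NRM{y}_\infty\leq\delta$, with $\delta$ depending only on $\veps$ and the law of $\xi$. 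The only genuine subtlety is the order of the quantifiers: $T$ must be chosen using only $\veps$ and the law of $\xi$ \emph{before} $\delta$ is introduced, and the two smallness regimes are complementary --- $\NRM{y}=1$ makes the truncation error $\eta(T)$ uniform in $n$ and $y$, while $\NRM{y}_\infty\leq\delta$ makes the Berry--Esseen error small once $T$ has been frozen.
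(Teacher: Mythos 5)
Your proof is correct, but it takes a genuinely different route from the paper's. The paper argues by contradiction: if no $\delta$ worked, one could pick $y_m\in S^{n_m-1}$ with $\NRM{y_m}_\infty\leq 1/m$ and $\NRM{F_m-\Phi}_\infty>\veps$, and then the second-order Taylor expansion $\varphi(t)=1-t^2/2+o(t^2)$ of the characteristic function of $\xi$, combined with $\sum_i y_{m,i}^2=1$ and $\max_i|y_{m,i}|\to0$, forces $\varphi_m(t)\to e^{-t^2/2}$ pointwise; Lévy's continuity theorem then gives weak convergence to $\mathcal{N}(0,1)$, which (since $\Phi$ is continuous, hence pointwise convergence of the CDFs upgrades to uniform convergence) contradicts the assumption. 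That argument is short and purely qualitative, and it exploits the finite second moment only through the Taylor expansion of $\varphi$. You instead give a direct, quantitative construction: you manufacture the missing third moment by truncating $\xi$ at a level $T=T(\veps)$, control the centered tail $S_n''$ in $L^2$ uniformly over $n$ and $y\in S^{n-1}$ (which is indeed exactly what the normalization $\NRM{y}=1$ provides), apply the non-identically-distributed Berry--Esseen bound to the bounded summands $y_i\xi_i'$ with Lyapunov ratio $\leq 2T\NRM{y}_\infty/\sigma'$, and assemble the four error terms with the quantifiers in the right order ($T$ and $s$ frozen before $\delta$). Your version is longer but yields an explicit $\delta$ in terms of the truncation level and the tail function $\eta(T)$, whereas the paper's compactness-style argument produces $\delta$ non-constructively; both correctly handle the fact that only a second moment is assumed. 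All the individual estimates you use (the variance bound $\sigma'^2\geq 1-2\eta(T)$, the bound $\ABS{\Phi(x/s)-\Phi(x)}\leq(1-s)/(s\sqrt{2\pi e})$, the sandwiching of $\{S_n\leq x\}$ between events for $S_n'$) check out.
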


    \begin{proof}[Proof of Lemma \ref{le:UQCLT}]
      To prove the lemma, let us assume the contrary. Then there exists
      $\veps>0$ and a sequence ${(y_m)}_{m\geq1}$ in $\ell^2(\dN)$ such that
      $\NRM{y_m}=1$ and $\NRM{y_m}_\infty\leq1/m$ for every $m\in\dN$, and such
      that $\NRM{F_m-\Phi}_\infty>\veps$ where $F_m$ is the cumulative
      distribution function of $S_m:=\sum_{i=1}^\infty y_{m,i}\xi_i$. Let
      $\varphi_m$ be the characteristic function of $S_m$. We have
      $\varphi_m(t)=\prod_{i=1}^\infty\varphi(y_{m,i}t)$ where $\varphi$ is
      the characteristic function of $\xi$. Fix any $t\in\dR$. By assumption
      on $\xi$, we get $\varphi(t)=1-\frac{t^2}{2}+o(t^2)$. Hence, using the identities
      $\NRM{y_m}^2:=\sum_{i=1}^\infty y_{m,i}^2=1$ and
      $\NRM{y_m}_\infty:=\max_{i\in\dN}|y_{m,i}|\leq 1/m$ together with the
      formula (valid for $m\to\infty$):
      \[
      \varphi_m(t)%
      =\prod_{i=1}^\infty\PAR{1-\frac{y_{m,i}^2t^2}{2}+o((y_{m,i}^2t^2))}%
      =\exp\PAR{-\frac{t^2}{2}\sum_{i=1}^\infty y_{m,i}^2+\sum_{i=1}^\infty o(y_{m,i}^2t^2)},
      \]
      we get $\lim_{m\to\infty}\varphi_m(t)=e^{-t^2/2}$. By the Lévy theorem
      for characteristic functions, it follows that $F_m\to\Phi$ pointwise as
      $m\to\infty$, which yields $S_m\to\mathcal{N}(0,1)$ weakly as
      $m\to\infty$, contradicting to our initial assumption.
    \end{proof}
    
    Let us continue with the proof of the uniform integrability. 
    Since $\xi^2\in L^1$ we get, by dominated convergence, 
    \[
    h(M):=\dE(\xi^2\IND_{\{\xi^2\geq M\}})%
    \underset{M\to\infty}{\longrightarrow}0.
    \]
    Let $\veps>0$, and let $\delta>0$ be defined from $\veps$ and the law of
    $\xi$ as in the above lemma (we can, of course, assume that $\delta\to 0$
    with $\veps\to 0$). Let $M>0$ and $n\in\dN$ and $y\in S^{n-1}$.
    Let us write $y=w+z$ where $w_i:=y_i\IND_{\ABS{y_i}\leq\delta^2}$ and
    $z_i:=y_i\IND_{\ABS{y_i}>\delta^2}$ for any $i=1,\ldots,n$. Then it is
    easily seen that
    \begin{itemize}
    \item[$\bullet$] $\supp(z)\cap\supp(w)=\varnothing$;
    \item[$\bullet$] $\NRM{w}\leq1$ and $\NRM{w}_\infty\leq\delta^2$;
    \item[$\bullet$] $\NRM{z}\leq1$ and $|\supp(z)|\leq 1/\delta^{4}$,
    \end{itemize}    
    where $\supp(x):=\{i:x_i\neq0\}$. Now we have
    $f_n^2(y)\leq2(f_n^2(w)+f_n^2(z))$ and
    \begin{align*}
      \dE(f^2_n(y)\IND_{\{f^2_n(y)\geq M\}}) %
      &\leq%
      2\dE((f_n^2(w)+f_n^2(z))%
      (\IND_{\{f_n^2(w)\geq\frac{M}{4}\}}+\IND_{\{f_n^2(z)\geq\frac{M}{4}\}}))\\
      &\leq 2\NRM{z}^2\dP\Bigr(f_n^2(w)\geq{\scriptstyle\frac{1}{4}}M\Bigr)%
      +2\NRM{w}^2\dP\Bigr(f_n^2(z)\geq{\scriptstyle\frac{1}{4}}M\Bigr)\\
      &\quad+2\dE(f_n^2(z)\IND_{\{f_n^2(z)\geq\frac{M}{4}\}})%
      +2\dE(f_n^2(w)\IND_{\{f_n^2(w)\geq\frac{M}{4}\}})\\
      &=:(*)+(**)\\
      &\quad+(***)+(****).
    \end{align*}
    Now, by Markov's inequality
    \[
    (*)+(**)
    \leq \frac{16}{M}.
    \]
    Second, using that $f_n^2(x)\leq
    \|\proj_x X_n\|^2$, valid for any $x\in\dR^n$ with $\|x\|\leq
    1$ and orthogonal projection $\proj_x$ onto its support, we obtain
    \begin{align*}
      (***)
      &\leq 2%
      \dE\Bigr(\Bigr(\sum_{i:z_i\neq0}\xi_i^2\Bigr)%
      \sum_{i:z_i\neq0}\IND_{\{\xi_i^2\geq\frac{\delta^8M}{4}\}}\Bigr)\\
      &= 2%
      \dE\Bigr(\sum_{i,j\in\supp(z)}%
      \xi_i^2\IND_{\{\xi_j^2\geq\frac{\delta^8M}{4}\}}\Bigr)\\
      &\leq 2%
      \Bigr(\frac{1}{\delta^8}\frac{4}{\delta^8M}%
      +\frac{h(\frac{\delta^8M}{4})}{\delta^4}\Bigr)%
      =\frac{8}{\delta^{16}M}+\frac{2h(\frac{\delta^8M}{4})}{\delta^{4}},
    \end{align*}
    where in the third line we used Markov's inequality.
    Third, we write
    \[
    (****)=2\dE(f_n^2(w))-2\dE(f_n^2(w)\IND_{\{f_n^2(w)<\frac{M}{4}\}}).
    \]
    Now if $\NRM{w}\leq\delta$ then $(****)\leq 2\delta^2$. Suppose in contrast
    that $\NRM{w}>\delta$, and denote $w_*:=w/\NRM{w}$. Then
    $\NRM{w_*}_\infty\leq\delta$, and therefore, by the CLT of Lemma
    \ref{le:UQCLT}, the distribution of $f_n(w_*)$ is $\veps$-close to
    $\mathcal{N}(0,1)$, and in particular, there exist $M_*(\veps)>0$ and
    $\rho(\veps)>0$ depending {\it only} on $\veps$ such that $\rho(\veps)\to0$ as
    $\veps\to0$ and
    \[
    \dE(f_n^2(w_*)\IND_{\{f_n^2(w_*)<M_*(\veps)\}})\geq 1-\rho(\veps).
    \]
    Thus, having in mind that $f_n^2(w_*)=f_n^2(w)/\|w\|^2$,
    we get
    \[
    (****)\leq2\|w\|^2\rho(\veps)\leq 2\rho(\veps)
    \]
    provided that $M\geq4M_*(\veps)$. So, in all cases,
    as long as $M$ is sufficiently large,
    \[
    (****)\leq 2\delta^2+2\rho(\veps).
    \]
    
    Finally, take any $M>0$ such that 
    $M\geq\max(16/\veps,8/(\delta^{16}\veps),4M_*(\veps))$
    and such that $2h(\delta^8M/4)/\delta^{14}\leq\veps$; then
    the desired result follows from
    \[
    (*)+(**)+(***)+(****)\leq4\veps+2\delta^2+2\rho(\veps).
    \]

  \item\emph{Proof of the first part (improved \eqref{eq:WTP-b}).}
    As before, we assume that $\xi$ is a random variable with zero mean and unit variance,
    and denote by $(\xi_i)$ a sequence of i.i.d.\ copies of $\xi$. Let us
    recall a kind of the weak law of large numbers for weighted sums, taken from
    \cite[Lemma 5]{tikhomirov-asymptotic}, which can be seen as a consequence
    of Lévy's continuity theorem for characteristic functions: if
    ${(\eta_i)}_{i\in I}$ is a sequence (finite or infinite) of i.i.d.\
    real random variables with zero mean then for every $\veps>0$ there exists
    $\delta>0$ which may depend on the law of the random variables with the
    following property: for every deterministic sequence ${(t_i)}_{i\in I}$ in
    $[0,\infty)$ such that $\NRM{t}_1:=\sum_{i\in I}t_i=1$ and
    $\NRM{t}_\infty:=\max_{i\in I}t_i\leq\delta$, we have
    \[
    \dP\Bigr(\Bigr|\sum_{i\in I}t_i\eta_i\Bigr|>\veps\Bigr)<\veps.
    \]
    Setting $\eta_i:=\xi_i^2-1$, it follows that there exists $h:(0,1]\to(0,1]$ such that, given any
    $\veps>0$ and a sequence ${(t_i)}_{i\in I}$ in $[0,\infty)$ with
    $\NRM{t}_1=1$ and $\NRM{t}_\infty\leq h(\veps)$, we have
    \begin{equation}\label{eq:deviation0}
      \dP\Bigr(\Bigr|\sum_{i\in I}t_i\xi_i^2-1\Bigr|>\veps\Bigr)\leq \veps.
    \end{equation}
    Without loss of generality, one can take $h$ strictly monotone (i.e.\
    invertible).
    
    We now proceed similarly to \cite[Proposition~1.3]{srivastava-vershynin}.
    Fix $n\in\dN$ and let $\proj$ be a non-zero orthogonal projection of $\dR^n$ of rank
    $r$. Let $X={(\xi_k)}_{1\leq k\leq n}$ be a random vector of
    $\dR^n$ with $\xi_1,\ldots,\xi_n$ i.i.d.\ copies of $\xi$. We have
    \[
    \NRM{\proj X}^2=\DOT{X,\proj X}.
    \]
    Let us also denote the matrix of $\proj$ in the canonical basis as
    $\proj$. We have $P^2=P=P^\top$ and $\trace(\proj)=\rank(\proj)=r$. Let
    $\proj_0$ be the matrix obtained from $\proj$ by zeroing the diagonal. We
    have $\proj-\proj_0=\diag(\proj)$. A standard decoupling inequality proved
    in \cite{decoupling} (see also the book \cite{ MR1666908}) states that for
    any convex function $F$,
    \begin{equation}\label{eq:decoupling}
    \dE(F(\DOT{X,\proj_0X}))\leq \dE(F(4\DOT{X,\proj_0X'})),
    \end{equation}
    where $X'$ is an independent copy of $X$. In particular the choice
    $F(u)=u^2$ gives
    \[
    \dE(\DOT{X,\proj_0X}^2)\leq 16\,\dE(\DOT{X,\proj_0X'}^2).
    \]
    Now recall that if $Z$ is a random vector of $\dR^n$ with covariance
    matrix $\Gamma$ and if $B$ is a $n\times n$ matrix then
    $\dE(\DOT{Z,BZ})=\trace(\Gamma B^\top)$. Seeing $X$ and $X'$ as column
    vectors,
    \begin{align*}
      \dE(\DOT{X,\proj_0X'}^2)
      &
      =\dE(X'^\top \proj_0 XX^\top \proj_0X')\\
      &
      =\dE(X'^\top \proj_0 \dE(XX^\top) \proj_0X')\\
      &
      =\dE(X'^\top \proj_0^2X')\\
      &
      =\trace(\proj_0^2)\\
      &
      \leq 2\trace((\proj-\proj_0)^2)+2\trace(\proj^2)\\
      &
      \leq 2\trace(\proj^2)+2\trace(\proj^2)\\
      &
      = 4r.
    \end{align*}
    Therefore
    \begin{equation*}\label{eq:ep0}
      \dE\PAR{\DOT{X,\proj_0 X}^2}\leq 64r
    \end{equation*}
    and by Markov's inequality we get
    \begin{equation}\label{eq:deviation1}
      \dP(\ABS{\DOT{X,\proj_0X}}>r^{3/4})\leq\frac{64}{\sqrt{r}}.
    \end{equation}
    Next note that 
    \begin{equation*}\label{eq:diag}
      \DOT{X,(\proj-\proj_0)X} =\sum_{i=1}^n \proj_{i,i}\xi_i^2
    \end{equation*}
    with $0\leq P_{i,i}\leq 1$ and $\sum_{i=1}^n P_{i,i}=r$. Hence taking
    $t_i=P_{i,i}/r$ and $\veps:=h^{-1}(1/r)$ with $\veps:=1$ if $1/r$ is outside
    of the range of $h$, we get, using \eqref{eq:deviation0},
    \[
    \dP\PAR{\ABS{\DOT{X,(\proj-\proj_0)X}-r}>\veps r}\leq\veps.
    \]
    Finally, by combining with \eqref{eq:deviation1} we obtain
    \[
    \dP\PAR{\ABS{\DOT{X,\proj X}-r}>(\veps+r^{-1/4})r}\leq\veps+\frac{64}{\sqrt{r}}
    \]
    and this implies the desired result, namely
    \[
    \dP\PAR{\ABS{\DOT{X,\proj X}-r}>o(r)}= o(1).
    \]    
  \item\emph{Proof of the second part (improved \eqref{eq:STP}).} Let
    $\xi$ be a random variable with zero mean, unit variance and a finite fourth moment.
    Further, let $\proj=(\proj_{ij})$, $\proj_0$ and $r$ have the same meaning as before, and $X=(\xi_k)_{1\leq k\leq n}$,
    where $\xi_k$'s are i.i.d.\ copies of $\xi$.
    For any $u>0$ we have
    \begin{align*}
    \dP&\PAR{\ABS{\|\proj X\|^2-r}>u}\\
    &\le \dP\PAR{\ABS{\DOT{X,\proj_0 X}}>u/2}%
    +\dP\PAR{\ABS{\DOT{X,(\proj-\proj_0) X}-r}>u/2}\\
    &=:(*)+(**).
    \end{align*}
    Let us estimate the last two quantities $(*)$ and $(**)$ separately. In
    both cases we will compute selected moments and use Markov's inequality.
    Set $B:=\dE(\xi_k^4)$. Note that $B\geq1$ since $\dE\xi^2=1$.
    Since $\xi_k$'s are independent, we
    get, for any unit vector $y=(y_k)_{1\leq k\leq n}$,
    \begin{equation}\label{eq:fourth moment}
    \dE(\DOT{X,y}^4)
    =
    \sum_i\dE(\xi_i^4)y_i^2y_i^2
    +3\sum_{i\neq j}\dE(\xi_i^2)\dE(\xi_j^2)y_i^2y_j^2
    \leq \max(B,3).
    \end{equation}
    The decoupling inequality \eqref{eq:decoupling} with $F(u)=u^4$ gives
    \[
    \dE(\DOT{X,\proj_0X}^4)\leq 256\,\dE(\DOT{X,\proj_0X'}^4),
    \]
    where $X'$ is an independent copy of $X$.
    Next, in view of \eqref{eq:fourth moment}, we get
    \[
    \dE(\DOT{X,\proj_0X'}^4) %
    \leq\dE(\|\proj_0X'\|^4)\max_{\NRM{y}=1}\dE(\DOT{X,y}^4) %
    \leq \max(B,3)\dE(\|\proj_0X'\|^4).
    \]
    Since $X'$ has independent coordinates of zero mean and unit variance, we get
    \begin{align*}
    \dE(\|\proj_0X'\|^4) %
    &\leq 8\dE(\|\proj X'\|^4) %
    +8\dE(\|(\proj-\proj_0)X'\|^4)\\
    &\leq 8\dE\Bigl(\sum\limits_{i,j}\proj_{ij}\xi_i\xi_j\Bigr)^2+
    8\dE\Bigl(\sum\limits_{i=1}^n\xi_i^2{\proj_{ii}}^2\Bigr)^2\\
    &\leq 8\max(B,2)\sum\limits_{i,j}{\proj_{ij}}^2+8\sum\limits_{i\neq j}\proj_{ii}\proj_{jj}
    +8B\sum_{i,j}{\proj_{ii}}^2{\proj_{jj}}^2\\
    &\leq 8\max(B,2)r+8r^2 %
    +8Br^2\\
    &\leq 32Br^2.
    \end{align*}
    Hence, $\dE(\DOT{X,\proj_0X}^4)\leq (256Br)^2$, and applying Markov's
    inequality, we get the following bound for $(*)$:
    \[
    (*)\le \frac{(1024Br)^2}{u^4}.
    \]

    Let us turn to estimating $(**)$. We will use symmetrization, truncation,
    and concentration. Let $\widetilde X=(\widetilde \xi_k)_{1\leq k\leq n}$ be an
    independent copy of $X$. Note that
    \[
    \dE((\DOT{\widetilde X,(\proj-\proj_0) \widetilde X}-r)^2)
    =\dE\Bigl(\Bigl(\sum_{k=1}^n\proj_{kk}({\widetilde \xi_k}^2-1)\Bigr)^2\Bigr)
    \leq Br,
    \]
    so, using the independence of $X$ and $\widetilde X$ and applying Markov's
    inequality, we get
    \begin{align*}
    (**)&\leq 2\dP\PAR{\ABS{\DOT{X,(\proj-\proj_0) X}%
        -\DOT{\widetilde X,(\proj-\proj_0) \widetilde X}}>u/2-\sqrt{2Br}}\\
    &= 2\dP\Bigl(\Bigl|\sum_{k=1}^n%
    \proj_{kk}({\xi_k}^2-{\widetilde\xi_k}^2)\Bigr|>u/2-\sqrt{2Br}\Bigr).
    \end{align*}
    Clearly, the variables ${\xi_k}^2-{\widetilde\xi_k}^2$ ($1\leq k\leq n$) are
    symmetrically distributed, with the variance bounded from above by $2B$.
    Let $h:\dR_+\to\dR_+$ be a function defined as
    \[
    h(t):=\dE\PAR{({\xi_k}^2-{\widetilde\xi_k}^2)^2\chi_t},
    \]
    where $\chi_t$ denotes the indicator of the event
    $\{|{\xi_k}^2-{\widetilde\xi_k}^2|\ge t\}$. Clearly, $h(t)\to 0$ when $t$
    tends to infinity. Note that, by Hoeffding's inequality, we have
    \begin{align*}
    \dP\Bigl(\Bigl|%
    \sum_{k=1}^n\proj_{kk}({\xi_k}^2-{\widetilde\xi_k}^2)(1-\chi_{r^{1/4}})\Bigr|>u/4\Bigr)
    &\leq 2\exp\Bigl(-\frac{2u^2}{16\sqrt{r}}\bigl(4\sum\nolimits_{k}{\proj_{kk}}^2\bigr)^{-1}\Bigr)\\
    &\leq 2\exp\bigl(-u^2/(32r\sqrt{r})\bigr).
    \end{align*}
    On the other hand, since the random variable
    $({\xi_k}^2-{\widetilde\xi_k}^2)\chi_{r^{1/4}}$ has zero mean, we have
    \[
    \dE\Bigl(\Bigl(\sum_{k=1}^n\proj_{kk}({\xi_k}^2-{\widetilde\xi_k}^2)\chi_{r^{1/4}}\Bigr)^2\Bigr)
    \le r h(r^{1/4}).
    \]
    Applying Markov's inequality, we get for all $u>4\sqrt{2Br}$,
    \[
    \dP\Bigl(\Bigl|%
    \sum_{k=1}^n\proj_{kk}({\xi_k}^2-{\widetilde\xi_k}^2)\chi_{r^{1/4}}\Bigr|>u/4-\sqrt{2Br}\Bigr)%
    \le \frac{r h(r^{1/4})}{(u/4-\sqrt{2Br})^2}.
    \]
    Combining the estimates, we obtain
    \begin{align*}
    (**)&\leq
    2\dP\Bigl(\Bigl|%
    \sum_{k=1}^n\proj_{kk}({\xi_k}^2-{\widetilde\xi_k}^2)(1-\chi_{r^{1/4}})\Bigr|>u/4\Bigr)\\
    &\hspace{1cm}+
    2\dP\Bigl(\Bigl|%
    \sum_{k=1}^n\proj_{kk}({\xi_k}^2-{\widetilde\xi_k}^2)\chi_{r^{1/4}}\Bigr|>u/4-\sqrt{2Br}\Bigr)\\
    &\leq 4\exp\bigl(-u^2/(32r\sqrt{r})\bigr)+\frac{128r h(r^{1/4})}{u^2} %
    \quad\text{for all $u>8\sqrt{2Br}$}.
    \end{align*}
    Finally, grouping together the bounds for $(*)$ and $(**)$, we get for all
    $u>8\sqrt{2Br}$,
    \[
    \dP\PAR{\ABS{\|\proj X\|^2-r}>u} %
    \leq %
    \frac{(1024Br)^2}{u^4} %
    +4\exp\bigl(-u^2/(32r\sqrt{r})\bigr) %
    +\frac{128r h(r^{1/4})}{u^2}.
    \]
    Set $\alpha\in(0,1/6)$. For any $u>8\sqrt{2B}r^{1-\alpha}=o(r)$, using the
    inequality $e^{-2t}\leq 1/t^4$ for $t>0$, the right hand side of the
    last equation above is bounded above by
    \[
    \frac{r}{u^2}\frac{(1024 B)^2r}{u^2} \\
    +\frac{r}{u^2}4\frac{64^4r^{5}}{u^6} 
    +\frac{r}{u^2}128h(r^{1/4})
    =\frac{o(r)}{u^2}.
    \]
    This proves the desired result.
    We note that the proof can be shortened and simplified under the stronger
    assumption that $\dE(|\xi_1|^p)<\infty$ for some $p>4$, see also
    \cite[Proposition 1.3]{srivastava-vershynin} for thin-shell estimates in
    the same spirit.
  \end{itemize}
\end{proof}

\section{Proof of Theorem \ref{th:min}}
\label{se:min}

As in \cite{MR3029269,srivastava-vershynin}, for every
$t\in\dR\setminus\{\lambda_1(S),\ldots,\lambda_n(S)\}$, we set
\[
\underline{m}_S(t)
:= \trace((S-tI_n)^{-1})=\sum_{k=1}^n\frac{1}{\lambda_k(S)-t}
\]
The function $t\mapsto\underline{m}_S(t)$ is positive and strictly increasing
on $(-\infty,\lambda_n(S))$. We note that $n\underline{m}_S$ is the restriction on
$\dR$ of the Cauchy--Stieltjes transform of the empirical spectral distribution
of $S$. What is important to us is that $\underline{m}_S$ encodes as
singularities the eigenvalues of $S$, is monotone on $(-\infty,\lambda_n(S))$, and
behaves nicely under rank one updates of $S$.

\subsection{Feasible lower shift}\label{se:feas-lower-shift}

Let $A$ be an $n\times n$ positive semi-definite non-random matrix with
eigenvalues $\lambda_{\max}:=\lambda_1\ge\cdots\ge\lambda_n=:\lambda_{\min}\ge
0$ and a corresponding orthonormal basis of eigenvectors $(x_i)_{i=1}^n$, and
let $u<\lambda_{\min}$. Further, assume that
$x$ is a (non-random) vector in $\dR^n$. We are interested in
those numbers $\delta\ge 0$ that (deterministically) satisfy
\begin{equation}\label{eq:small-gen-conds}
\lambda_{\min}>u+\delta\quad\text{and}\quad
\underline{m}_{A+xx^\top}(u+\delta)\le\underline{m}_A(u).
\end{equation}
Following \cite{srivastava-vershynin}, any value of $\delta$ satisfying
\eqref{eq:small-gen-conds}, will be called \emph{a feasible lower shift} with
respect to $A$, $x$ and $u$. The following statement is taken from
\cite{srivastava-vershynin}; we provide its proof for reader's convenience.

\begin{lemma}[Feasible lower shift -- {\cite[Lemma~2.2]{srivastava-vershynin}}]
  \label{le:q1q2}
  Let $\delta\geq 0$ be such that $u+\delta<\lambda_{\min}$. Let us define
  \begin{align*}
    q_1(\delta)
    &:=x^\top(A-u-\delta)^{-1}x
    =\sum_{i=1}^n\frac{\DOT{x,x_i}^2}{\lambda_i-u-\delta}\\
    q_2(\delta)&:=
    \frac{x^\top(A-u-\delta)^{-2}x}{\trace((A-u-\delta)^{-2})} %
    = \Bigr(\sum_{i=1}^n(\lambda_i-u-\delta)^{-2}\Bigr)^{-1}
    \sum_{i=1}^n\frac{\langle x,x_i\rangle^2}{\bigl(\lambda_i-u-\delta\bigr)^2}.
  \end{align*}
  Then a sufficient condition for \eqref{eq:small-gen-conds} to be satisfied is
  $
  q_2(\delta)\ge \delta (1+q_1(\delta)).
  $
\end{lemma}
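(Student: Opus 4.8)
The plan is to convert the spectral condition \eqref{eq:small-gen-conds} into a scalar algebraic inequality by means of the Sherman--Morrison rank one update formula, and then to control the increment of $\underline{m}_A$ by a crude monotonicity estimate. Throughout I write $t:=u+\delta$ and $M:=A-tI_n$. The hypothesis $u+\delta<\lambda_{\min}$ says exactly that each eigenvalue $\lambda_i-t$ of $M$ is strictly positive, so $M$ is positive definite and invertible; in particular the first condition $\lambda_{\min}>u+\delta$ in \eqref{eq:small-gen-conds} is part of the hypotheses and needs nothing more.

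First I would apply the Sherman--Morrison formula to $M+xx^\top=A+xx^\top-tI_n$. Since $M^{-1}$ is positive definite we have $1+x^\top M^{-1}x\ge 1>0$, so the formula is valid and
\[
(A+xx^\top-tI_n)^{-1}=M^{-1}-\frac{M^{-1}xx^\top M^{-1}}{1+x^\top M^{-1}x}.
\]
Taking traces and recalling that, by definition, $x^\top M^{-1}x=q_1(\delta)$, $\trace(M^{-1})=\underline{m}_A(u+\delta)$ and $x^\top M^{-2}x=q_2(\delta)\,\trace(M^{-2})$, we get
\[
\underline{m}_{A+xx^\top}(u+\delta)
=\trace(M^{-1})-\frac{x^\top M^{-2}x}{1+x^\top M^{-1}x}
=\underline{m}_A(u+\delta)-\frac{q_2(\delta)\,\trace(M^{-2})}{1+q_1(\delta)}.
\]

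Next I would bound the increment of $\underline{m}_A$. Writing
\[
\underline{m}_A(u+\delta)-\underline{m}_A(u)=\sum_{i=1}^n\frac{\delta}{(\lambda_i-u-\delta)(\lambda_i-u)}
\]
and using that $\lambda_i-u\ge\lambda_i-u-\delta>0$ (valid since $\delta\ge0$ and $u+\delta<\lambda_{\min}\le\lambda_i$), each summand is at most $\delta(\lambda_i-u-\delta)^{-2}$, hence
\[
\underline{m}_A(u+\delta)-\underline{m}_A(u)\le\delta\sum_{i=1}^n(\lambda_i-u-\delta)^{-2}=\delta\,\trace(M^{-2}).
\]
Combining this with the previous display, the inequality $\underline{m}_{A+xx^\top}(u+\delta)\le\underline{m}_A(u)$ holds as soon as $\delta\,\trace(M^{-2})\le q_2(\delta)\,\trace(M^{-2})/(1+q_1(\delta))$; since $\trace(M^{-2})>0$ and $1+q_1(\delta)>0$, this is equivalent to $\delta(1+q_1(\delta))\le q_2(\delta)$, which is precisely the asserted sufficient condition.

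There is essentially no hard step here: the only points requiring attention are checking that $M$ is positive definite — so that Sherman--Morrison applies and all traces of negative powers of $M$ are strictly positive — which is exactly where $u+\delta<\lambda_{\min}$ is used, and the elementary estimate $(\lambda_i-u-\delta)^{-1}(\lambda_i-u)^{-1}\le(\lambda_i-u-\delta)^{-2}$ that lets one absorb the full increment of $\underline{m}_A$ into $\delta\,\trace(M^{-2})$. The genuine difficulty of the section lies downstream, in choosing the random shifts $\delta=\delta^k_R$ along the rank one updates so that this deterministic sufficient condition can actually be satisfied in expectation, which is the content of the later lemmas.
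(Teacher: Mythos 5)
Your proof is correct and follows essentially the same route as the paper: Sherman--Morrison for the trace of the rank one update, the bound $\underline{m}_A(u+\delta)-\underline{m}_A(u)\le\delta\,\trace((A-u-\delta)^{-2})$ (which you prove eigenvalue-by-eigenvalue where the paper phrases it as positive definiteness of $\delta(A-u-\delta)^{-2}-((A-u-\delta)^{-1}-(A-u)^{-1})$), and then the same algebraic reduction to $\delta(1+q_1(\delta))\le q_2(\delta)$.
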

\begin{proof}  
  If $S$ is a symmetric matrix and if $x$ is a vector, and if both $S$ and
  $S+xx^\top$ are invertible, then $1+x^\top S^{-1} x\neq0$ since $x^\top
  S^{-1}x=-1$ gives $(S+xx^\top)S^{-1}x=0$. Moreover the inverse
  $(S+xx^\top)^{-1}$ of the rank one update $S+xx^\top$ of $S$ can be
  expressed as
  \begin{equation}\label{eq:sherman-morrison}
    (S+xx^\top)^{-1} = S^{-1}-\frac{S^{-1}xx^\top S^{-1}}{1+x^\top S^{-1}x}.
  \end{equation}
  This is known as the Sherman--Morrison formula. This allows to write
  \begin{align*}
    \underline{m}_{A+xx^\top}(u+\delta)
    &=\trace((A-u-\delta+xx^\top)^{-1}) \\
    &=\trace((A-u-\delta)^{-1})
    -\frac{x^\top(A-u-\delta)^{-2}x}{1+x^\top(A-u-\delta)^{-1}x}\\
    &=\underline{m}_A(u)
    +\trace((A-u-\delta)^{-1}-(A-u)^{-1})
    -\frac{x^\top(A-u-\delta)^{-2}x}{1+x^\top(A-u-\delta)^{-1}x}.
  \end{align*}
  Now since $A-(u+\delta)I_n$ is positive definite, it follows that
  \[
  \delta(A-u-\delta)^{-2}-((A-u-\delta)^{-1}-(A-u)^{-1}) 
  \]
  is positive definite, and therefore
  \[
  \underline{m}_{A+xx^\top}(u+\delta)-\underline{m}_A(u)
  \leq
  \delta\trace((A-u-\delta)^{-2})
  -\frac{x^\top(A-u-\delta)^{-2}x}{1+x^\top(A-u-\delta)^{-1}x}.
  \]
  Finally it can be checked that the right hand side is $\leq0$ if
  $\delta(1+q_1(\delta))-q_2(\delta)\leq0$.
\end{proof}

\begin{lemma}[Construction of the feasible shift]\label{le:de}
Let $A$, $x$, $u$ and $q_1,q_2$ be as above, $\veps\in(0,1)$ and assume that
\[
\lambda_{\min}-u\geq 2/\veps^2.
\]
Then the quantity
\[
\delta:=\frac{(1-\veps)%
  \IND_{\{q_1(1/\veps)\leq(1+\veps)\underline{m}_A(u)+\veps\}}}
{(1+\veps)(1+\underline{m}_A(u))}
\Bigr(\sum_{i=1}^n(\lambda_i-u)^{-2}\Bigr)^{-1} \sum_{i=1}^n\frac{\langle
  x,x_i\rangle^2\IND_{\{\langle x,x_i\rangle^2\leq
    1/\veps\}}}{\bigl(\lambda_i-u\bigr)^2}
\]
satisfies $\delta\leq 1/\veps$ and is a feasible lower shift w.r.t.\ $A$, $x$ and $u$, i.e.\
$\lambda_{\min}>u+\delta$ and $q_2(\delta)\geq \delta(1+q_1(\delta))$.
\end{lemma}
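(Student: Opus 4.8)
The goal is to verify that the explicitly constructed $\delta$ meets both requirements of Lemma~\ref{le:q1q2}: that $u+\delta<\lambda_{\min}$ and that $q_2(\delta)\geq\delta(1+q_1(\delta))$. The first is easy: by the displayed formula, $\delta$ is a (truncated, reweighted) convex combination of the quantities $\langle x,x_i\rangle^2(\lambda_i-u)^{-2}/\sum_j(\lambda_j-u)^{-2}$ damped by the prefactor $(1-\veps)/((1+\veps)(1+\underline m_A(u)))<1$, so one only needs an upper bound on $\delta$. Using $\langle x,x_i\rangle^2\IND_{\{\langle x,x_i\rangle^2\le1/\veps\}}\le1/\veps$ crudely gives $\delta\le 1/\veps$, and the hypothesis $\lambda_{\min}-u\ge2/\veps^2\ge1/\veps$ (for $\veps<1$, after possibly a harmless constant) then yields $u+\delta\le u+1/\veps\le\lambda_{\min}$; one should be a touch careful to get strict inequality, e.g.\ by noting $2/\veps^2>1/\veps$ strictly for $\veps\in(0,1)$.

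The substance is the inequality $q_2(\delta)\ge\delta(1+q_1(\delta))$. The plan is to compare $q_1(\delta),q_2(\delta)$ (evaluated at the shifted point $u+\delta$) with their ``unshifted'' analogues at $u$, exploiting that $0\le\delta\le1/\veps$ while $\lambda_i-u\ge\lambda_{\min}-u\ge2/\veps^2$, so the multiplicative distortion of each factor $(\lambda_i-u-\delta)^{-1}$ versus $(\lambda_i-u)^{-1}$ is at most $(1-\delta/(\lambda_{\min}-u))^{-1}\le(1-\veps/2)^{-1}\le 1+\veps$ (for $\veps$ small), and similarly from below by $1$. Concretely: (i) $q_1(\delta)\le(1+\veps)^2$ times the truncated sum $\sum_i\langle x,x_i\rangle^2\IND_{\{\langle x,x_i\rangle^2\le1/\veps\}}(\lambda_i-u)^{-1}$ plus the contribution of the large coordinates — but on the event where the indicator $\IND_{\{q_1(1/\veps)\le(1+\veps)\underline m_A(u)+\veps\}}$ in the definition of $\delta$ vanishes we have $\delta=0$ and the inequality $q_2(0)\ge0$ is trivial, so we may assume that indicator equals $1$, which bounds $q_1(\delta)$ by roughly $(1+\veps)\underline m_A(u)+\veps$ up to the distortion; (ii) on that same event, $1+q_1(\delta)\le(1+\veps)(1+\underline m_A(u))$ after absorbing constants. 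For $q_2$ from below: drop the truncation indicator inside would go the wrong way, so instead use that $q_2(\delta)=(\sum_i(\lambda_i-u-\delta)^{-2})^{-1}\sum_i\langle x,x_i\rangle^2(\lambda_i-u-\delta)^{-2}\ge(1-\veps)\cdot(\sum_i(\lambda_i-u)^{-2})^{-1}\sum_i\langle x,x_i\rangle^2\IND_{\{\langle x,x_i\rangle^2\le1/\veps\}}(\lambda_i-u)^{-2}$ — the factor $(1-\veps)$ coming again from the $(1\pm\veps)$ distortion of numerator and denominator (the denominator grows by at most $(1+\veps)^2$, the numerator shrinks by at most the truncation and a factor $1$), and dropping the large coordinates only decreases the numerator. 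Putting (i), (ii) and this together, the definition of $\delta$ is precisely engineered so that
\[
q_2(\delta)\ \ge\ (1-\veps)\,\Theta\ =\ \delta\,(1+\veps)(1+\underline m_A(u))\ \ge\ \delta\,(1+q_1(\delta)),
\]
where $\Theta:=(\sum_i(\lambda_i-u)^{-2})^{-1}\sum_i\langle x,x_i\rangle^2\IND_{\{\langle x,x_i\rangle^2\le1/\veps\}}(\lambda_i-u)^{-2}$ is the reweighted truncated sum appearing in the formula for $\delta$.

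The main obstacle is bookkeeping the $(1\pm\veps)$ factors consistently: one must make sure the distortion estimates $(\lambda_i-u-\delta)^{-1}\le(1+\veps)(\lambda_i-u)^{-1}$ and $\ge(\lambda_i-u)^{-1}$ are applied in the direction that is favorable in each of $q_1$ (upper bound) and $q_2$ (lower bound), and that after squaring for $q_2$ the accumulated constant is still dominated by the single factor $(1-\veps)/((1+\veps)(1+\underline m_A(u)))$ built into $\delta$ — this is why the hypothesis is $\lambda_{\min}-u\ge2/\veps^2$ rather than merely $\ge1/\veps$, giving $\delta/(\lambda_{\min}-u)\le\veps/2$ and hence $(1-\delta/(\lambda_{\min}-u))^{-2}\le(1-\veps/2)^{-2}\le1+\veps$ for $\veps$ small. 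I would also note at the outset the trivial-case reduction (indicator $=0\Rightarrow\delta=0$), which disposes of the only place where $q_1(\delta)$ could otherwise be large. No genuinely deep idea is needed beyond Lemma~\ref{le:q1q2}; it is a careful perturbation argument.
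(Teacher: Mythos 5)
Your proposal is correct and follows essentially the same route as the paper: on the event where the indicator is $1$ you bound $1+q_1(\delta)\le 1+q_1(1/\veps)\le(1+\veps)(1+\underline{m}_A(u))$ by monotonicity of $q_1$ together with $\delta\le1/\veps$, and you bound $q_2(\delta)\ge(1-\veps)q_2(0)\ge(1-\veps)\Theta$ using $\delta\le 1/\veps\le\tfrac{\veps}{2}(\lambda_{\min}-u)$, which is exactly the paper's argument. The only slip is the intermediate claim $(1-\veps/2)^{-2}\le1+\veps$ (false for all $\veps>0$); what is needed, true, and identical to the paper's inequality \eqref{eq:def:delta:epsilon} is $(1-\veps/2)^{-2}\le(1-\veps)^{-1}$, which leaves your final chain $q_2(\delta)\ge(1-\veps)\Theta=\delta(1+\veps)(1+\underline{m}_A(u))\ge\delta(1+q_1(\delta))$ unchanged.
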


\begin{proof}
  First, note that the condition $\lambda_{\min}-u\geq 2/\veps^2$
  immediately implies that 
  \[
  \veps (\lambda_i-u)^2-\frac{2(\lambda_i-u)}{\veps}+\frac{1}{\veps^2} %
  \geq 0, \quad i=1,\dots,n,
  \]
  which is in turn equivalent to the relation
  \begin{equation}\label{eq:def:delta:epsilon}
    \frac{1}{(1-\veps)\bigl(\lambda_i-u\bigr)^2}%
    \geq \frac{1}{\bigl(\lambda_i-u-1/\veps\bigr)^2},\quad i=1,\dots,n.
  \end{equation}
  Now, let us return to $\delta$. The inequality $\delta\leq 1/\veps$
  follows directly from its definition. Next,
  \[
  \frac{(1+q_1(\delta))\IND_{\{q_1(1/\veps)\leq (1+\veps)\underline{m}_A(u)+\veps\}}}
  {(1+\veps)(1+\underline{m}_A(u))}\leq 1,
  \]
  so we get
  \[
  \delta(1+q_1(\delta))\leq (1-\veps)q_2(0)\leq q_2(\delta),
  \]
  where the last inequality comes from \eqref{eq:def:delta:epsilon} and
  the definition of $q_2$.
\end{proof}

\subsection{Randomization and control of expectations}

Let, as before, $A$ be an $n\times n$ non-random positive semidefinite matrix with
eigenvalues $\lambda_1\ge\dots\ge\lambda_n\ge 0$, and let $u<\lambda_n$.
We define a (random) quantity $\delta$ as in
Lemma~\ref{le:de}, replacing the fixed vector $x$ with a random isotropic
vector $X$.

\begin{lemma}\label{le:lb:q_1:control}
  Let $\veps\in\bigl(0,12^{-3}\bigr)$, $n\geq 12/\veps^4$,
  and let $X$ be a random isotropic vector in $\dR^n$ such that
  \[
  \dP\PAR{\|\proj X\|^2-\rank \proj\geq \veps\rank \proj/6}\leq \veps^4
  \]
  for any non-zero orthogonal projection $\proj$ of rank at least
  $\veps^{11}n/72$. Further, let the non-random matrix $A$, the numbers
  $(\lambda_i)_{i\leq n}$ and vectors $(x_i)_{i\leq n}$ be as above, and
  $u\in\dR$ be such that $\lambda_{\min}-u\geq
  6\veps^{-2}+\veps^{-1}$. Assume additionally that
  \begin{equation}\label{eq:lb:small:differ}
    \sum_{i=1}^n\frac{1}{(\lambda_i-u)(\lambda_i-u+1)}\leq \frac{1}{\veps n}.
  \end{equation}
  Then, with $q_1$ defined as in Lemma~\ref{le:q1q2} (with $X$ replacing the
  non-random vector $x$), we have
  \[
  \dP\bigl\{q_1(1/\veps)+1\geq (1+\veps)(1+\underline{m}_A(u))\bigr\} %
  \leq 4\veps^2.
  \]
\end{lemma}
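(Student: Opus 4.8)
The plan is to control $q_1(1/\veps)=\sum_i\frac{\langle X,x_i\rangle^2}{\lambda_i-u-1/\veps}$ in expectation, then invoke Markov's inequality. The first observation is that, since $\lambda_{\min}-u\geq 6\veps^{-2}+\veps^{-1}$, we have $\lambda_i-u-1/\veps\geq \frac{5}{6}(\lambda_i-u)$ for all $i$ (this is exactly the kind of elementary bound used to establish \eqref{eq:def:delta:epsilon} in Lemma~\ref{le:de}), so $q_1(1/\veps)\leq\frac{6}{5}\sum_i\frac{\langle X,x_i\rangle^2}{\lambda_i-u}$, reducing us to $q_1(0)$. Since $X$ is isotropic, $\dE q_1(0)=\sum_i\frac{1}{\lambda_i-u}=\underline m_A(u)$, so in expectation $q_1(1/\veps)$ is already of the right size; the point of the lemma is that the \emph{fluctuations} of $q_1$ are small, and for that a first-moment argument alone is too weak — we need the hypothesis on $\|\proj X\|^2$ for projections.

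The key idea is a \emph{dyadic decomposition of spectral space}. Split the index set $\{1,\dots,n\}$ according to the size of $\lambda_i-u$ into blocks $B_j=\{i:\ 2^{j}\leq\lambda_i-u<2^{j+1}\}$ (with the largest block possibly extending to $+\infty$), and on each block replace $(\lambda_i-u)^{-1}$ by the constant $2^{-j}$ up to a factor $2$. Letting $\proj_j$ be the orthogonal projection onto $\mathrm{span}\{x_i:i\in B_j\}$ and $r_j=\rank\proj_j=|B_j|$, we get
\[
q_1(0)\ \leq\ \sum_j 2^{-j}\|\proj_j X\|^2\ \leq\ 2\sum_j 2^{-j}\bigl(\|\proj_j X\|^2-r_j\bigr)_+\ +\ 2\sum_j 2^{-j}r_j,
\]
and the deterministic term $\sum_j 2^{-j}r_j$ is comparable to $\underline m_A(u)$. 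For each block with $r_j\geq\veps^{11}n/72$, the hypothesis gives $\dP(\|\proj_j X\|^2-r_j\geq \veps r_j/6)\leq\veps^4$, so off an event of probability at most (number of large blocks)$\times\veps^4$ we have $\|\proj_j X\|^2\leq(1+\veps/6)r_j$ simultaneously; the number of relevant dyadic scales is only logarithmic (bounded using $n\geq 12/\veps^4$), which keeps the union bound below $4\veps^2$. The blocks with $r_j<\veps^{11}n/72$ are handled \emph{deterministically}: here I would bound $\|\proj_j X\|^2$ crudely — but $X$ is random, so instead the right move is to control the total contribution $\sum_{j:\,r_j\text{ small}}2^{-j}\|\proj_j X\|^2$ in $L^1$ using isotropy (its expectation is $\sum_{j\text{ small}}2^{-j}r_j$) together with hypothesis \eqref{eq:lb:small:differ}, which precisely says $\sum_i\frac{1}{(\lambda_i-u)(\lambda_i-u+1)}\leq\frac{1}{\veps n}$ and is designed to make the small-block contribution to $\underline m_A(u)$ itself negligible (of order $1/(\veps n)$ times the relevant weights), hence absorbable into the $\veps$-slack.

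Putting the pieces together: on the good event one has $q_1(1/\veps)\leq\frac{6}{5}\bigl[(1+\veps/6)\cdot(\text{deterministic part})+(\text{small-block remainder})\bigr]$, and the deterministic part is $\leq(1+C\veps)\underline m_A(u)$ while the remainder is $\leq\veps$-ish by \eqref{eq:lb:small:differ}; choosing constants carefully (the hypotheses $\veps<12^{-3}$ and the various powers $\veps^{11}$, $\veps^{-2}$ are tuned for exactly this) yields $q_1(1/\veps)+1\leq(1+\veps)(1+\underline m_A(u))$ off an event of probability $\leq 4\veps^2$. \textbf{The main obstacle} I anticipate is bookkeeping: one must simultaneously (i) keep the number of dyadic scales small enough that the union bound over large blocks stays under $4\veps^2$, (ii) verify that lumping $(\lambda_i-u)^{-1}$ into dyadic constants costs only bounded factors that get swallowed by the $\veps$-slack rather than a fixed constant $>1$, and (iii) show the small-rank blocks' total weight is genuinely lower-order — all three being quantitative and requiring the precise numerology of the hypotheses, which is why the statement is stated with such specific exponents rather than with soft $o(\cdot)$ notation.
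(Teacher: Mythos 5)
Your overall architecture is the right one — pass from $q_1(1/\veps)$ to $q_1(0)$, partition the spectrum into multiplicative blocks, apply the projection hypothesis to large-rank blocks and first-moment/Markov arguments to the rest — and it matches the paper's proof in spirit. But two steps, exactly the ones you flag as ``bookkeeping'' risks, lose fixed constants greater than $1$ and therefore cannot yield the sharp $(1+\veps)$ bound. First, the reduction $q_1(1/\veps)\leq\frac65 q_1(0)$ is fatally lossy: since $\underline m_A(u)$ is of order $1$ in the application, a multiplicative factor $6/5$ on $q_1(0)\approx\underline m_A(u)$ already violates $q_1(1/\veps)\leq(1+\veps)\underline m_A(u)+\veps$. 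The hypothesis $\lambda_{\min}-u\geq 6\veps^{-2}+\veps^{-1}$ is tuned to give precisely $\frac{1}{\lambda_i-u-1/\veps}\leq\frac{1+\veps/6}{\lambda_i-u}$, i.e.\ a loss of only $1+\veps/6$; you must use that exact form. Second, genuinely dyadic blocks $B_j=\{i:2^j\leq\lambda_i-u<2^{j+1}\}$ force a factor $2$ when you replace $(\lambda_i-u)^{-1}$ by a block constant, and that factor does \emph{not} get swallowed by the $\veps$-slack — your obstacle (ii) is not a bookkeeping worry but a genuine failure of the dyadic choice. The paper instead partitions $\ln(\lambda_i-u)$ into intervals of length $\veps/6$, so the within-block ratio is $e^{\veps/6}=1+O(\veps)$.

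Once the blocks are that fine, your union-bound count also breaks as stated: the number of scales is not ``logarithmic, bounded using $n\geq 12/\veps^4$'' — over the full range of $\lambda_i-u$ it is unbounded in terms of $n$ and $\veps$, and even $\log_2 n$ many scales times $\veps^4$ need not be $\leq 4\veps^2$ uniformly in $n$. The missing idea is to first truncate to the window $\veps^4 n/12<\lambda_i-u<6n/\veps^3$, whose multiplicative width is $\mathrm{poly}(1/\veps)$ \emph{independently of $n$}, so that the number of blocks of log-width $\veps/6$ is at most $\veps^{-2}$ and the union bound gives $\veps^{-2}\cdot\veps^4=\veps^2$. The two excluded tails are then handled in expectation: the lower tail via \eqref{eq:lb:small:differ} (which gives $\dE\sum_{\lambda_i-u\leq\veps^4n/12}\frac{\langle X,x_i\rangle^2}{\lambda_i-u}\leq\veps^3/6$) and the upper tail trivially (at most $n$ terms each $\leq\veps^3/(6n)$), each contributing $\veps^2$ to the failure probability by Markov. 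Your small-\emph{rank} blocks inside the window are also handled by Markov, but using the lower bound $\lambda_i-u>\veps^4 n/12$ rather than \eqref{eq:lb:small:differ}. With these repairs — the $1+\veps/6$ factor, blocks of multiplicative width $e^{\veps/6}$, and the $n$-independent truncation — your argument becomes the paper's proof.
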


\begin{proof}
  First, note that the lower bound on $\lambda_{\min}-u$ implies that
  $q_1(1/\veps)\leq (1+\veps/6)q_1(0)$ (deterministically). Let us
  split the index set $\{1,\dots,n\}$ into several subsets in the following
  way: First, let $I:=\{i\leq n:\,\lambda_i-u\leq \veps^4 n/12\}$. Next,
  we set 
  \[
  J:=\{i\leq n:\,\lambda_i-u\geq 6n/\veps^3\},
  \]
  so that
  \[
  \{1,\dots,n\}\setminus (I\cup J)%
  =\bigl\{i\leq n:\,\veps^4 n/12< \lambda_i-u<6n/\veps^3\bigr\}.
  \]
  Note that, by the choice of $\veps$, we have
  $\exp(1/(12\veps))\geq 72/\veps^7$. Hence, the interval
  $(\ln(\veps^4n/12),\ln(6n/\veps^3))$ can be partitioned into
  $\lfloor\veps^{-2}\rfloor$ subintervals ${\mathcal S}_k$ ($k\leq
  \veps^{-2}$) of length at most $\veps/6$ each. Then we let
  \[
  I_k:=\bigl\{i\leq n:\,\ln(\lambda_i-u)\in {\mathcal S}_k\bigr\},%
  \quad k\leq \veps^{-2}.
  \]
  Obviously,
  \[
  q_1(0)%
  =\sum_{i\in I}\frac{\DOT{X,x_i}^2}{\lambda_i-u}%
  +\sum_{i\in J}\frac{\DOT{X,x_i}^2}{\lambda_i-u}%
  +\sum_{k\leq \veps^{-2}}\sum_{i\in I_k}\frac{\DOT{X,x_i}^2}{\lambda_i-u}%
  =(*)+(**)+(***).
  \]
  Let us estimate the three quantities separately.

  First, in view of the condition \eqref{eq:lb:small:differ}, the lower bound
  on $n$ and the definition of $I$, we have
  \[
  \dE(*)=\sum_{i\in I}\frac{1}{\lambda_i-u}\leq
  \sum_{i\in I}\frac{\veps^4n/12+1}{(\lambda_i-u)(\lambda_i-u+1)}
  \leq \frac{\veps^3}{6}.
  \]
  Hence, by Markov's inequality,
  \[
  \dP\bigl\{(*)\geq \veps/6\bigr\}\leq \veps^2.
  \]

  Similarly,
  \[
  \dE(**)=\sum_{i\in J}\frac{1}{\lambda_i-u}\leq \frac{\veps^3}{6},
  \]
  whence
  \[
  \dP\bigl\{(**)\geq \veps/6\bigr\}\leq \veps^2.
  \]

  Now, we consider the quantity $(***)$. First, assume that for some $k\leq
  \veps^{-2}$ we have $|I_k|\leq \veps^{11} n/72$. Since for every
  $i\in I_k$ we have $\lambda_i-u\geq \veps^4 n/12$, we obtain
  \[
  \dE\sum_{i\in I_k}\frac{\DOT{X,x_i}^2}{\lambda_i-u}\leq \frac{\veps^7}{6},
  \]
  whence, by Markov's inequality,
  \[
  \dP\Bigl\{\sum_{i\in I_k}\frac{\DOT{X,x_i}^2}{\lambda_i-u} %
  \geq \veps^3/6\Bigr\}\leq \veps^4.
  \]
  Now, if for some $k\leq \veps^{-2}$ we have $|I_k|>\veps^{11}
  n/72$, then, by the condition on projections and the definition of $I_k$,
  denoting by $P_k$ the orthogonal projection onto the span of $(x_i)_{i\in
    I_k}$, we obtain
  \[
  \dP\Bigl\{\sum_{i\in I_k}\frac{\DOT{X,x_i}^2}{\lambda_i-u}\geq
  \sum_{i\in
    I_k}\frac{\exp(\veps/6)(1+\veps/6)}{\lambda_i-u}\Bigr\} \leq
  \dP\bigl\{\|\proj_k X\|^2\geq (1+\veps/6)\rank \proj_k\bigr\}\leq
  \veps^4.
  \]

  Combining all the above estimates together, we get
  \begin{align*}
  \dP&\bigl\{q_1(1/\veps)%
  \geq (1+\veps/6)%
  \bigl(\veps/2+\exp(\veps/6)(1+\veps/6)\underline{m}_A(u)\bigr)\bigr\}\\
  &\leq\dP\bigl\{(*)+(**)+(***)%
  \geq \veps/6+\veps/6+\veps/6+
  \exp(\veps/6)(1+\veps/6)\underline{m}_A(u)\bigr\}\\
  &\leq 4\veps^2.
  \end{align*}
  It remains to note that
  \[
  (1+\veps/6)\bigl(\veps/2%
  +\exp(\veps/6)(1+\veps/6)\underline{m}_A(u)\bigr)+1%
  \leq (1+\veps)(1+\underline{m}_A(u)).
  \]
\end{proof}

\begin{lemma}[Control of $\dE\delta$]\label{le:Ed1}
  Let $\veps\in\bigl(0,12^{-3}\bigr)$, $n\geq 12/\veps^4$, and let
  $A$, $(\lambda_i)_{i\leq n}$, $(x_i)_{i\leq n}$, $X$ and $\delta$ be the
  same as in Lemma~\ref{le:lb:q_1:control} (and satisfy the same conditions).
  Assume additionally that $\dE(\langle X,x_i\rangle^2\IND_{\{\langle
    X,x_i\rangle^2\leq 1/\veps\}})\geq r$ ($i\leq n$) for some $r\in[0,1]$. Then
  \[
  \dE\delta\geq \frac{(1-\veps)r}{(1+\veps)(1+\underline{m}_A(u))}-4\veps.
  \]
\end{lemma}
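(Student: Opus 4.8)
The plan is to split $\delta$ into a deterministic prefactor, a bounded random weighted average, and a single indicator, and then estimate each ingredient separately. Recalling the definition of $\delta$ from Lemma~\ref{le:de} (with the random vector $X$ in place of $x$), I would write
\[
\delta=\frac{1-\veps}{(1+\veps)(1+\underline{m}_A(u))}\,Z\,\IND_E,
\]
where $E:=\{q_1(1/\veps)\leq(1+\veps)\underline{m}_A(u)+\veps\}$ and, with weights $w_i:=(\lambda_i-u)^{-2}\bigl(\sum_{j}(\lambda_j-u)^{-2}\bigr)^{-1}$ (so $w_i\geq0$ and $\sum_i w_i=1$),
\[
Z:=\sum_{i=1}^n w_i\,\DOT{X,x_i}^2\IND_{\{\DOT{X,x_i}^2\leq 1/\veps\}}.
\]
Since each summand $\DOT{X,x_i}^2\IND_{\{\DOT{X,x_i}^2\leq 1/\veps\}}$ lies in $[0,1/\veps]$ and $Z$ is a convex combination of them, we get the two-sided deterministic bound $0\leq Z\leq 1/\veps$; this is precisely where the truncation built into the definition of $\delta$ pays off.

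Next I would bound $\dE Z$ from below. By linearity of expectation and the additional hypothesis $\dE\bigl(\DOT{X,x_i}^2\IND_{\{\DOT{X,x_i}^2\leq 1/\veps\}}\bigr)\geq r$ for every $i\leq n$, we obtain $\dE Z=\sum_i w_i\,\dE\bigl(\DOT{X,x_i}^2\IND_{\{\DOT{X,x_i}^2\leq 1/\veps\}}\bigr)\geq r\sum_i w_i=r$. To pass from $\dE Z$ to $\dE(Z\IND_E)$, observe that the complement event satisfies $E^c=\{q_1(1/\veps)>(1+\veps)\underline{m}_A(u)+\veps\}\subseteq\{q_1(1/\veps)+1\geq(1+\veps)(1+\underline{m}_A(u))\}$, so Lemma~\ref{le:lb:q_1:control} (whose hypotheses are assumed to hold here) gives $\dP(E^c)\leq 4\veps^2$. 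Combining this with $0\leq Z\leq1/\veps$,
\[
\dE(Z\IND_E)=\dE Z-\dE(Z\IND_{E^c})\geq r-\frac{1}{\veps}\dP(E^c)\geq r-4\veps.
\]

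Finally, since $u<\lambda_{\min}$ forces $\underline{m}_A(u)=\sum_k(\lambda_k-u)^{-1}>0$, we have $(1+\veps)(1+\underline{m}_A(u))>1$, and therefore
\[
\dE\delta=\frac{1-\veps}{(1+\veps)(1+\underline{m}_A(u))}\,\dE(Z\IND_E)\geq\frac{(1-\veps)r}{(1+\veps)(1+\underline{m}_A(u))}-\frac{4\veps(1-\veps)}{(1+\veps)(1+\underline{m}_A(u))}\geq\frac{(1-\veps)r}{(1+\veps)(1+\underline{m}_A(u))}-4\veps,
\]
which is the claimed inequality. The argument is mostly bookkeeping: the only genuinely delicate points are securing the deterministic upper bound $Z\leq1/\veps$ (so that the contribution of $Z$ on the bad event $E^c$ is negligible after multiplying the probability estimate by $1/\veps$) and checking that $E^c$ is contained in the event controlled by Lemma~\ref{le:lb:q_1:control}, so that its $4\veps^2$ bound can be invoked verbatim.
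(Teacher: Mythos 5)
Your proof is correct and follows essentially the same route as the paper's: both isolate the deterministic prefactor, use the hypothesis $\dE(\langle X,x_i\rangle^2\IND_{\{\langle X,x_i\rangle^2\leq 1/\veps\}})\geq r$ to lower-bound the truncated weighted average by $r$ in expectation, and charge the bad event $E^c$ at cost $\veps^{-1}\dP(E^c)\leq 4\veps$ via Lemma~\ref{le:lb:q_1:control}. The only cosmetic difference is that the paper invokes the bound $\delta\leq 1/\veps$ directly to control the bad-event contribution, whereas you bound $Z\leq 1/\veps$ and absorb the prefactor at the end; both are valid.
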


\begin{proof}
  Since
  \[
  \delta=\frac{(1-\veps)(1-\IND_{\{q_1(1/\veps)>
      (1+\veps)\underline{m}_A(u)+\veps\}})}
  {(1+\veps)(1+\underline{m}_A(u))}
  \Bigr(\sum_{i=1}^n(\lambda_i-u)^{-2}\Bigr)^{-1} \sum_{i=1}^n\frac{\langle
    X,x_i\rangle^2\IND_{\{\langle X,x_i\rangle^2\leq
      1/\veps\}}}{\bigl(\lambda_i-u\bigr)^2},
  \]
  and in view of the bound $\delta\leq 1/\veps$, we get
  \begin{align*}
    \dE\delta &\geq \frac{(1-\veps)}
    {(1+\veps)(1+\underline{m}_A(u))}%
    \Bigl(\sum_{i=1}^n(\lambda_i-u)^{-2}\Bigr)^{-1}
    \sum_{i=1}^n\frac{%
      \dE\bigl(\langle X,x_i\rangle^2\IND_{\{\langle X,x_i\rangle^2\leq 1/\veps\}}\bigr)}%
    {\bigl(\lambda_i-u\bigr)^2}\\
    &-\veps^{-1}\dP\bigl\{q_1(1/\veps)>
    (1+\veps)\underline{m}_A(u)+\veps\bigr\}.
  \end{align*}
  Finally, applying Lemma~\ref{le:lb:q_1:control} to the last expression, we get the result.
\end{proof}

\subsection{Proof of Theorem~\ref{th:min}, completed}\label{se:proof of thmin}

Let $(X_n)_{n\in\dN}$ be as in the statement of the theorem. Without loss of
generality, we can assume that both functions $f,g:\dN\to\dR_+$ in the Weak
Tail Projection property \eqref{eq:WTP-b} are non-increasing. Additionally let
us define
\[
h(M):=\sup_{\substack{n\in\dN\\y\in S^{n-1}}}%
  \dE\bigl(\langle X_n,y\rangle^2\IND_{\{\langle X_n,y\rangle^2\geq M\}}\bigr),%
  \quad M\geq 0
\]
Note that \eqref{eq:WTP-a} gives $\lim_{M\to\infty}h(M)=0$.

Take any $\veps\in(0,12^{-3})$ and define $n_\veps$ as the smallest integer
greater than $12/\veps^4$ such that {\it (a)} $g(\veps^{11}n_\veps/72)\leq
\veps^4$ and $f(\veps^{11}n_\veps/72)\leq \veps/6$ and {\it (b)}
for all $n\geq n_\veps$ we have $(\sqrt{m_n}-\sqrt{n})^2\geq \varepsilon$ and $m_n/n\geq 3\overline{\rho}^{-1}/4+1/4$
(the latter implies $(\sqrt{m_n/n}-1)^{-1}\leq 2/(\overline{\rho}^{-1/2}-1)$). From now
on, we fix an $n\geq n_\veps$, let $m:=m_n$ and let $X^{(1)},\dots,X^{(m)}$ be
i.i.d.\ copies of $X_n$. We define
\[
A^{(0)}:=0%
\quad\text{and}\quad%
A^{(k)}:=A^{(k-1)}+X^{(k)}\otimes X^{(k)},\quad 1\leq k\leq m,
\]
so that $A_n=A^{(m)}$. Set
\[
u_0:=n-\sqrt{mn}
\]
and let $u_1,\dots,u_m$ be a collection of \emph{random} numbers defined
inductively as follows:
\[
u_k:=u_{k-1}+\delta^k-\delta_R^k,
\]
where $\delta^k$ is defined as $\delta$ in Lemma~\ref{le:de} (with $A^{(k-1)}$, $u_{k-1}$
and $X^{(k)}$ replacing $A$, $u$ and $x$, respectively) and {\it the regularity shift} $\delta_R^k$ is defined by
\[
\delta_R^k:=\min\Bigl\{\ell\in\{0,1,\ldots\}:\,%
\underline{m}_{A^{(k)}}(u_{k-1}+\delta^k-\ell)
-\underline{m}_{A^{(k)}}(u_{k-1}+\delta^k-\ell-1)\leq\frac{1}{\veps n}\Bigr\}.
\]
Note that lemmas~\ref{le:q1q2} and~\ref{le:de} imply that we have,
(deterministically) for all $k\ge 0$,
\[
\underline{m}_{A^{(k)}}(u_k)%
\leq\underline{m}_{A^{(0)}}(u_0)%
=\frac{n}{\sqrt{mn}-n}.
\]

The ultimate purpose of the shift $\delta_R^k$ is to guarantee relation \eqref{eq:lb:small:differ}
which was an important condition in proving the concentration lemma~\ref{le:lb:q_1:control}.
Of course, since $\delta_R^k$ moves $u_k$ away from the spectrum, one must make sure
that the cumulative impact of the shifts $\delta_R^k$ is small enough and does not destroy
the desired asymptotic estimate.

\begin{lemma}\label{le:lb:deltaC}
  With $\delta_R^k$ ($1\leq k\leq m$) defined above, the following holds
  deterministically:
  \[
  \sum_{k=1}^m\delta_R^k\leq \frac{\veps n^2}{\sqrt{mn}-n}.
  \]
\end{lemma}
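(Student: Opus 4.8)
The plan is to combine the minimality built into the definition of $\delta_R^k$ with the telescoping monotonicity $\underline{m}_{A^{(k)}}(u_k)\le\underline{m}_{A^{(0)}}(u_0)$ recorded just above the statement. No probabilistic input is needed: the bound is a purely deterministic consequence of how $u_k$ and $\delta_R^k$ are constructed.

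First I would treat a single index $k\in\{1,\dots,m\}$. Write $v_k:=u_{k-1}+\delta^k$, so that $u_k=v_k-\delta_R^k$. Since $\delta^k$ is a feasible lower shift with respect to $A^{(k-1)}$, $X^{(k)}$ and $u_{k-1}$ (Lemmas~\ref{le:q1q2} and~\ref{le:de}), relation \eqref{eq:small-gen-conds} gives $v_k<\lambda_{\min}(A^{(k)})$; hence $v_k-\ell<\lambda_{\min}(A^{(k)})$ for every integer $\ell\ge0$, so each $\underline{m}_{A^{(k)}}(v_k-\ell)$ is finite, and the unit increments $\underline{m}_{A^{(k)}}(v_k-\ell)-\underline{m}_{A^{(k)}}(v_k-\ell-1)=\sum_i\bigl((\lambda_i-v_k+\ell)(\lambda_i-v_k+\ell+1)\bigr)^{-1}$ (with $\lambda_i$ the eigenvalues of $A^{(k)}$) are positive and tend to $0$ as $\ell\to\infty$, so $\delta_R^k$ is a well-defined finite nonnegative integer. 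By the minimality in its definition, for every integer $\ell$ with $0\le\ell\le\delta_R^k-1$ one has $\underline{m}_{A^{(k)}}(v_k-\ell)-\underline{m}_{A^{(k)}}(v_k-\ell-1)>1/(\veps n)$; summing these $\delta_R^k$ inequalities telescopes to
\[
\underline{m}_{A^{(k)}}(v_k)-\underline{m}_{A^{(k)}}(u_k)>\frac{\delta_R^k}{\veps n},
\]
an inequality that trivially also holds (as $0\ge0$) when $\delta_R^k=0$. Feasibility of $\delta^k$ again, via \eqref{eq:small-gen-conds}, gives $\underline{m}_{A^{(k)}}(v_k)=\underline{m}_{A^{(k-1)}+X^{(k)}\otimes X^{(k)}}(u_{k-1}+\delta^k)\le\underline{m}_{A^{(k-1)}}(u_{k-1})$, so with $a_k:=\underline{m}_{A^{(k)}}(u_k)$ (finite and positive, since every eigenvalue of $A^{(k)}$ exceeds $u_k$) the display becomes $\delta_R^k/(\veps n)\le a_{k-1}-a_k$. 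Summing over $k=1,\dots,m$ the right side telescopes:
\[
\frac{1}{\veps n}\sum_{k=1}^m\delta_R^k\le a_0-a_m\le a_0=\underline{m}_{A^{(0)}}(u_0)=\frac{n}{\sqrt{mn}-n},
\]
using $a_m\ge0$, $A^{(0)}=0$ and $u_0=n-\sqrt{mn}$. Multiplying by $\veps n$ gives the claim.

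The only point I would double-check carefully is that every argument at which $\underline{m}$ is evaluated in this computation — namely $v_k-\ell$, $u_k$, and $u_{k-1}$ — lies strictly below the spectrum of the relevant matrix, so that $\underline{m}$ there is finite, positive and increasing; this is exactly what the feasibility inequality $\lambda_{\min}(A^{(k)})>v_k$ provides, together with the observation that the regularity shift only moves $u_k$ further to the left. Apart from that bookkeeping, the proof is a pure telescoping argument and uses none of the concentration machinery of the surrounding lemmas.
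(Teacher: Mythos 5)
Your proof is correct and is essentially identical to the paper's: both use the minimality in the definition of $\delta_R^k$ to get $\underline{m}_{A^{(k)}}(u_k)\le\underline{m}_{A^{(k-1)}}(u_{k-1})-\delta_R^k/(\veps n)$ and then telescope against $\underline{m}_{A^{(0)}}(u_0)=n/(\sqrt{mn}-n)$ and the positivity of $\underline{m}_{A^{(m)}}(u_m)$. The extra bookkeeping you add (finiteness of $\delta_R^k$ and the location of all evaluation points below the spectrum) is a welcome elaboration of points the paper leaves implicit.
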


\begin{proof}
  Take any admissible $k\geq 1$. The definition of $\delta_R^k$ immediately
  implies that for all $0\leq\ell<\delta_R^k$ we have
  \[
  \underline{m}_{A^{(k)}}(u_{k-1}+\delta^k-\ell)-
  \underline{m}_{A^{(k)}}(u_{k-1}+\delta^k-\ell-1)>\frac{1}{\veps n}.
  \]
  Hence,
  \[
  \underline{m}_{A^{(k)}}(u_k)=\underline{m}_{A^{(k)}}(u_{k-1}+\delta^k-\delta_R^k)\\
  \leq\underline{m}_{A^{(k)}}(u_{k-1}+\delta^k)-\frac{\delta_R^k}{\veps n}\\
  \leq\underline{m}_{A^{(k-1)}}(u_{k-1})-\frac{\delta_R^k}{\veps n}.
  \]
  Thus, $\underline{m}_{A^{(k)}}(u_k)\leq
  \underline{m}_{A^{(k-1)}}(u_{k-1})-\frac{\delta_R^k}{\veps n}$ for all
  $k\ge 1$, which, together with the relations
  $0<\underline{m}_{A^{(m)}}(u_m)$ and
  $\underline{m}_{A^{(0)}}(u_0)=\frac{n}{\sqrt{mn}-n}$ implies the result.
\end{proof}

Now, fix for a moment any $k\in\{1,\dots,m\}$ and let $\mathcal{F}_{k-1}$ be
the $\sigma$-algebra generated by the vectors $X^{(1)},\dots,X^{(k-1)}$, with the convention
$\mathcal{F}_0=\{\varnothing,\Omega\}$. We
will first estimate the conditional expectation
$\dE(\delta^k\,|\,\mathcal{F}_{k-1})$.

Note that, by the definition of $u_{k-1}$, we have (deterministically)
\[
\underline{m}_{A^{(k-1)}}(u_{k-1})-\underline{m}_{A^{(k-1)}}(u_{k-1}-1)%
\leq \frac{1}{\veps n}
\]
(the above relation holds for $k>1$ in view of the definition of $\delta^{k-1}_R$ and
for $k=1$ ~--- because of the definition of $n_\varepsilon$).
Together with the lower bound on $n$ (which implies the conditions on
orthogonal projections assumed in Lemma~\ref{le:Ed1}) and the condition
\[
\underline{m}_{A^{(k-1)}}(u_{k-1})%
\leq \underline{m}_{A^{(0)}}(u_{0})%
=\frac{n}{\sqrt{mn}-n},
\]
we get from Lemma~\ref{le:Ed1} that
\[
\dE(\delta^k\,|\,\mathcal{F}_{k-1})%
\geq
\frac{(1-\veps)(1-h(1/\veps))}{(1+\veps)(1+\underline{m}_{A^{(0)}}(u_{0}))}-4\veps
=\frac{(1-\veps)(1-h(1/\veps))}{1+\veps}\Bigl(1-\sqrt{\frac{n}{m}}\Bigr)-4\veps.
\]
Hence, by the definition of $u_k$'s and Lemma~\ref{le:lb:deltaC}, we obtain
\[
\dE u_m\geq u_0+\frac{(1-\veps)(1-h(1/\veps))}{1+\veps}\bigl(m-\sqrt{mn}\bigr)-4\veps m
-\frac{\veps n^2}{\sqrt{mn}-n}.
\]
Since $u_m<\lambda_{\min}(A_n)$ (deterministically), we get from the above relation
\begin{align*}
  \dE\lambda_{\min}(A_n)%
  &\geq (\sqrt{m}-\sqrt{n})^2-
  (3\veps+h(1/\veps))\bigl(m-\sqrt{mn}\bigr)-4\veps m
  -\frac{\veps n^2}{\sqrt{mn}-n}\\
  &\geq (\sqrt{m}-\sqrt{n})^2-7\veps
  m-h(1/\veps)m-\frac{2\veps n}{\overline\rho^{-1/2}-1}.
\end{align*}

Since the above estimate holds for arbitrarily small $\veps$, and having
in mind that $h(1/\veps)\to 0$ with $\varepsilon\to0$, we get the desired result.

\section{Proof of Theorem \ref{th:max}}
\label{se:max}

Following \cite{MR3029269,srivastava-vershynin}, for arbitrary $n\times n$ symmetric matrix $S$ and
every $t\in\dR\setminus\{\lambda_1(S),\ldots,\lambda_n(S)\}$, we set
\[
\overline{m}_S(t)
:= \trace((tI_n-S)^{-1})=\sum_{k=1}^n\frac{1}{t-\lambda_k(S)}.
\]
The function $t\mapsto\overline{m}_S(t)=-\underline{m}_S(t)$ is positive and
strictly decreasing on $(\lambda_1(S),+\infty)$.

\subsection{Feasible upper shift}\label{ss:feasible-up-shift}

Let $A$ be an $n\times n$ positive semi-definite non-random matrix with
eigenvalues $\lambda_{\max}:=\lambda_1\ge\cdots\ge\lambda_n=:\lambda_{\min}\ge
0$ and a corresponding orthonormal basis of eigenvectors $(x_i)_{i=1}^n$, and
let $u>\lambda_1$ be such that $\overline{m}_A(u)<1$. Further, assume that $x$
is a (non-random) vector in $\dR^n$. In this section, we consider
those numbers $\Delta\ge 0$ that (deterministically) satisfy
\begin{equation}\label{eq:largest-gen-conds}
  \lambda_{\max}(A+xx^\top)<u+\Delta%
  \quad\text{and}\quad%
  \overline{m}_{A+xx^\top}(u+\Delta)\le\overline{m}_A(u).
\end{equation}
Following \cite{srivastava-vershynin}, any value of $\Delta$ satisfying
\eqref{eq:largest-gen-conds}, will be called \emph{a feasible upper shift}
with respect to $A$, $x$ and $u$. The following statement is taken from
\cite{srivastava-vershynin}; we provide its proof for completeness.

\begin{lemma}[Feasible upper shift -- {\cite[Lemma~3.3]{srivastava-vershynin}}]
  \label{le:Q1Q2}
  Let $u>\lambda_1$ and $\overline{m}_A(u)<1$.
  For any $\Delta>0$, define
  \begin{align*}
    Q_1(\Delta)
    &:=x^\top(u+\Delta-A)^{-1}x %
    =\sum_{i=1}^n\frac{\langle x,x_i\rangle^2}{u+\Delta-\lambda_i} \\
    Q_2(\Delta)
    &:= \frac{x^\top(u+\Delta-A)^{-2}x}{\overline{m}_A(u)-\overline{m}_A(u+\Delta)} %
    =\bigl(\overline{m}_A(u)-\overline{m}_A(u+\Delta)\bigr)^{-1}
    \sum_{i=1}^n\frac{\langle x,x_i\rangle^2}{\bigl(u+\Delta-\lambda_i\bigr)^2}.
  \end{align*}
  Then a sufficient condition for \eqref{eq:largest-gen-conds} to be satisfied
  is
  \[
  Q_1(\Delta)<1\quad\text{and}\quad Q_2(\Delta)\le 1-Q_1(\Delta).
  \]
\end{lemma}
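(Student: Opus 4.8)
The plan is to mimic the proof of Lemma~\ref{le:q1q2}, now working with the rank-one \emph{downdate} obtained from $(u+\Delta)I_n-(A+xx^\top)=S-xx^\top$, where $S:=(u+\Delta)I_n-A$. Since $u+\Delta>u>\lambda_1$, the matrix $S$ is positive definite, and conjugating by $S^{-1/2}$ shows that $S-xx^\top$ is positive definite if and only if $\|S^{-1/2}x\|^2=x^\top S^{-1}x<1$. As $x^\top S^{-1}x=Q_1(\Delta)$, the first hypothesis $Q_1(\Delta)<1$ is exactly equivalent to $\lambda_{\max}(A+xx^\top)<u+\Delta$, i.e.\ the first condition in \eqref{eq:largest-gen-conds}; it also ensures that $\overline{m}_{A+xx^\top}(u+\Delta)$ is well defined and that $1-x^\top S^{-1}x=1-Q_1(\Delta)\neq 0$.

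Next I would apply the Sherman--Morrison formula in the ``downdate'' form
\[
(S-xx^\top)^{-1}=S^{-1}+\frac{S^{-1}xx^\top S^{-1}}{1-x^\top S^{-1}x},
\]
which is a one-line verification (multiply out $(S-xx^\top)$ times the right-hand side). Taking traces, using $S^\top=S$ so that $\trace(S^{-1}xx^\top S^{-1})=x^\top S^{-2}x$, and noting $\trace(S^{-1})=\overline{m}_A(u+\Delta)$, we obtain
\[
\overline{m}_{A+xx^\top}(u+\Delta)=\overline{m}_A(u+\Delta)+\frac{x^\top S^{-2}x}{1-Q_1(\Delta)},
\qquad x^\top S^{-2}x=\sum_{i=1}^n\frac{\langle x,x_i\rangle^2}{(u+\Delta-\lambda_i)^2}.
\]

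Here I would use that $\overline{m}_A$ is strictly decreasing on $(\lambda_1,\infty)$, so $\overline{m}_A(u)-\overline{m}_A(u+\Delta)>0$ for $\Delta>0$; this makes $Q_2(\Delta)$ well defined and gives $x^\top S^{-2}x=Q_2(\Delta)\bigl(\overline{m}_A(u)-\overline{m}_A(u+\Delta)\bigr)$. Substituting,
\[
\overline{m}_{A+xx^\top}(u+\Delta)-\overline{m}_A(u)
=\bigl(\overline{m}_A(u)-\overline{m}_A(u+\Delta)\bigr)\Bigl(\frac{Q_2(\Delta)}{1-Q_1(\Delta)}-1\Bigr),
\]
so, since the prefactor is positive, the second condition in \eqref{eq:largest-gen-conds} holds whenever $Q_2(\Delta)/(1-Q_1(\Delta))\le 1$, i.e.\ $Q_2(\Delta)\le 1-Q_1(\Delta)$, which is the second hypothesis. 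This completes the argument.

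There is essentially no obstacle. Unlike the feasible lower shift lemma, where one needed the operator inequality $\delta(A-u-\delta)^{-2}\succeq(A-u-\delta)^{-1}-(A-u)^{-1}$ to pass from the Sherman--Morrison expansion to a usable bound, here the Sherman--Morrison step produces an exact identity, so the only mild points to check are the equivalence $S-xx^\top\succ 0\iff x^\top S^{-1}x<1$ and the strict monotonicity of $\overline{m}_A$ that justifies dividing by $\overline{m}_A(u)-\overline{m}_A(u+\Delta)$.
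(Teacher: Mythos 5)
Your proposal is correct and follows essentially the same route as the paper: the Sherman--Morrison downdate identity for $\trace\bigl((u+\Delta-A-xx^\top)^{-1}\bigr)$, the observation that $Q_1(\Delta)<1$ together with conjugation by $S^{-1/2}$ yields positive definiteness of $(u+\Delta)I_n-(A+xx^\top)$, and the positivity of $\overline{m}_A(u)-\overline{m}_A(u+\Delta)$ to convert $Q_2(\Delta)\le 1-Q_1(\Delta)$ into the desired trace inequality. The only cosmetic difference is that you present the positive-definiteness step as an equivalence and dispense with the paper's reduction to $x\neq 0$, neither of which changes the substance.
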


\begin{proof}
  We can assume without loss of generality that $x\neq 0$, so that
  $Q_2(\Delta)>0$. The Sherman--Morrison formula \eqref{eq:sherman-morrison}
  gives
  \begin{align*}
    \overline{m}_{A+xx^\top}(u+\Delta)%
    &=\trace((u+\Delta)I_n-A-xx^\top)^{-1}\\
    &=\trace((u+\Delta)I_n-A)^{-1}
    +\frac{\bigl\|((u+\Delta)I_n-A)^{-1}x\bigr\|^2}%
    {1-x^\top\bigl((u+\Delta)I_n-A\bigr)^{-1}x}\\
    &=\overline{m}_A(u+\Delta) %
    +\Bigl(1-\sum_{i=1}^n\frac{\langle
      x,x_i\rangle^2}{u+\Delta-\lambda_i}\Bigr)^{-1}%
    \sum_{i=1}^n\frac{\langle
      x,x_i\rangle^2}{\bigl(u+\Delta-\lambda_i\bigr)^2}.
  \end{align*}
  Hence, we have
  $\overline{m}_{A+xx^\top}(u+\Delta)\le \overline{m}_{A}(u)$ if and only if
  \[
  \Bigl(1-\sum_{i=1}^n\frac{\langle x,x_i\rangle^2}{u+\Delta-\lambda_i}\Bigr)^{-1}
  \sum_{i=1}^n\frac{\langle x,x_i\rangle^2}{\bigl(u+\Delta-\lambda_i\bigr)^2}\le 
  \overline{m}_A(u)-\overline{m}_A(u+\Delta).
  \]
  But the latter inequality clearly holds if $Q_1(\Delta)<1$ and
  $Q_2(\Delta)\leq1-Q_1(\Delta)$.
  
  Next, the rank one matrix
  \[
  (u+\Delta-A)^{-1/2}x((u+\Delta-A)^{-1/2}x)^\top
  =(u+\Delta-A)^{-1/2}xx^\top(u+\Delta-A)^{-1/2}
  \]
  has eigenvalues $0$ and $\|(u+\Delta-A)^{-1/2}x\|^2$. But
  the condition $Q_1(\Delta)<1$ implies
  $\|(u+\Delta-A)^{-1/2}x\|^2=x^\top(u+\Delta-A)^{-1}x<1$. Therefore the
  matrix
  \[
  I_n-(u+\Delta-A)^{-1/2}xx^\top(u+\Delta-A)^{-1/2}
  \]
  is positive definite, implying that
  \[
  (u+\Delta-A)^{1/2}I_n(u+\Delta-A)^{1/2}-xx^\top%
  =(u+\Delta)I_n-(A+xx^\top)
  \]
  is positive definite (recall that for any two positive definite matrices $S$ and $T$, the matrix $S^{1/2}TS^{1/2}$ is positive definite). Hence, $u+\Delta>\lambda_{\max}(A+xx^\top)$.
\end{proof}

Following \cite{srivastava-vershynin}, we will treat the quantities $Q_1$ and
$Q_2$ separately: for a specially chosen number $\tau\in(0,1)$, we will take
$\Delta_1$ such that $Q_1(\Delta_1)\le\tau$, and $\Delta_2$ such that
$Q_2(\Delta_2)\le 1-\tau$. Then, in view of monotonicity of $Q_1$ and $Q_2$,
the sum $\Delta_1+\Delta_2$ will be a feasible upper shift. In our case,
$\tau$ shall be close to $\overline{m}_A(u)$.

Let us introduce ``level sets'' $I_j$ as follows:
\begin{equation}\label{eq:def level sets}
I_j:=\bigl\{i\le n:\, 4^{j-1}\le u-\lambda_i< 4^j\bigr\},\quad j\in\dN.
\end{equation}
Note that the condition $\overline{m}_A(u)<1$ immediately implies that
$|I_j|<4^j$ for all $j\geq 1$. Moreover, if we additionally assume that
$\max_{\ell\in\dN}\frac{|I_\ell|}{16^\ell}$ is sufficiently small then the
following estimate of the ratio $\frac{|I_j|}{4^j}$ is valid:
\begin{lemma}\label{le:prop Ij}
Let the sets $I_j$ be as above and assume that for some $\veps\in(0,1]$ we have
\[
\max_{\ell\in\dN}\frac{|I_\ell|}{16^\ell}\leq \frac{1}{\veps n}.
\]
Then
\[
\frac{|I_j|}{4^j}\leq \sqrt{\frac{|I_j|}{\veps n}},\;\;j\in\dN.
\]
\end{lemma}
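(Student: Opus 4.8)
The plan is to reduce the claimed bound directly to the hypothesis, which after squaring is literally the same statement. Fix $j\in\dN$. If $|I_j|=0$ then both sides of the desired inequality vanish and there is nothing to do, so I will assume $|I_j|\geq 1$ (recall $|I_j|$ is the cardinality of a subset of $\{1,\dots,n\}$, so it is a nonnegative integer).

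First I would factor the left-hand side, using $4^j=\sqrt{16^j}$, as
\[
\frac{|I_j|}{4^j}
=\sqrt{|I_j|}\,\sqrt{\frac{|I_j|}{16^j}}.
\]
Then I would invoke the hypothesis in the form
\[
\frac{|I_j|}{16^j}\leq\max_{\ell\in\dN}\frac{|I_\ell|}{16^\ell}\leq\frac{1}{\veps n},
\]
and, since $t\mapsto\sqrt{t}$ is nondecreasing on $[0,\infty)$, conclude
\[
\frac{|I_j|}{4^j}\leq\sqrt{|I_j|}\,\sqrt{\frac{1}{\veps n}}=\sqrt{\frac{|I_j|}{\veps n}}.
\]
Since $j\in\dN$ was arbitrary and the hypothesis is already uniform over indices $\ell$, this settles every $j$ simultaneously.

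There is no genuine obstacle here; the content of the lemma is entirely the bookkeeping of exponents, namely that the conclusion is normalized by $4^j$ while the hypothesis is normalized by $16^j=(4^j)^2$, so that exactly ``one square root's worth'' of the factor $|I_j|$ survives on the right-hand side. The only things worth a remark are the trivial case $|I_j|=0$ and the fact that all quantities involved are nonnegative, so that passing to square roots preserves the inequality.
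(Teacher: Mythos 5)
Your proof is correct. The only thing to check is the algebraic identity
\[
\frac{|I_j|}{4^j}=\sqrt{|I_j|}\,\sqrt{\frac{|I_j|}{16^j}},
\]
which holds since $16^j=(4^j)^2$, and then the hypothesis bounds the second factor by $(\veps n)^{-1/2}$; monotonicity of the square root and nonnegativity of all quantities do the rest, exactly as you say. The paper reaches the same bound by a slightly more roundabout route: it introduces a free parameter $\alpha>0$, splits into the cases $|I_j|\geq\alpha 4^j$ (where the hypothesis gives $|I_j|/4^j\leq\alpha^{-1}|I_j|/(\veps n)$) and $|I_j|<\alpha 4^j$ (where trivially $|I_j|/4^j\leq\alpha$), and then optimizes by choosing $\alpha=\sqrt{|I_j|/(\veps n)}$ to equalize the two bounds. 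That is the standard ``geometric mean of two estimates'' device, and your factorization is precisely the closed form of that optimization, so the two arguments have identical content; yours is simply more direct and dispenses with the case analysis (and with the paper's implicit restriction to $I_j\neq\varnothing$, which you handle explicitly).
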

\begin{proof}
Fix any natural $j$ such that $I_j\neq\varnothing$. Then, obviously,
\[
\frac{|I_j|}{4^j}\leq \frac{4^j}{\veps n}.
\]
Fix for a moment any $\alpha>0$. If $|I_j|\geq \alpha 4^j$ then
\[
\frac{|I_j|}{4^j}\leq \frac{\alpha^{-1} |I_j|}{\veps n}.
\]
Otherwise, $\frac{|I_j|}{4^j}\leq \alpha$. It remains to choose
$\alpha:=\sqrt{\frac{|I_j|}{\veps n}}$.
\end{proof}

For every $j\geq 1$, denote
\begin{equation}\label{eq:def hj}
h_j:=\sum_{i\in I_j}\langle x,x_i\rangle^2-|I_j|.
\end{equation}

\begin{lemma}[Definition of $\Delta_1$]\label{le:Q1}
  Let $A$, $x$ and $u$ be as above, with $u>\lambda_{\max}$ and $\overline{m}_A(u)<1$,
  and let $\veps\in(0,1/4]$ be a real parameter.
  Define a number $\Delta_1$ as follows:
  \[
  \Delta_1:=\Delta_1'+\sum_{j=1}^{\lfloor\log_4(n/\veps^2)\rfloor}\Delta_{1,j},
  \]
  where
  \[
  \Delta_1':=\veps^{-1/2}\|x\|^2\IND_{\{\veps\|x\|^2\geq n\}}
  \]
  and, for each natural $j\le \log_4(n/\veps^2)$, we let
  \[
  \Delta_{1,j}:=\veps^{-1} h_j\IND_{\{h_j> \veps^2 2^j\sqrt{|I_j|}\}}.
  \]
  Let $Q_1$ be as in Lemma \ref{le:Q1Q2}. Then $\Delta_1$ satisfies
  \[
  Q_1(\Delta_1)\leq\overline{m}_A(u+\Delta_1)+6\sqrt{\veps}+
  8\veps\sqrt{n}\sqrt{\max_{\ell\in\dN}\frac{|I_\ell|}{16^\ell}}.
  \]
\end{lemma}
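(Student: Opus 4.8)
\textbf{Proof plan for Lemma \ref{le:Q1}.}
The goal is to bound $Q_1(\Delta_1)=\sum_{i=1}^n\frac{\langle x,x_i\rangle^2}{u+\Delta_1-\lambda_i}$ by grouping the indices according to the level sets $I_j$, and showing that on each level set the contribution of $\Delta_1$ to the denominator either is negligible (because the fluctuation $h_j$ is small), or has been explicitly compensated by the corresponding shift $\Delta_{1,j}$. First I would split $Q_1(\Delta_1)=\sum_{i:\,u-\lambda_i<1}+\sum_{j\ge 1}\sum_{i\in I_j}$, and handle the ``small gap'' part $u-\lambda_i<1$ directly: these indices contribute at most $\|x\|^2/\Delta_1\le\|x\|^2/\Delta_1'$ when $\veps\|x\|^2\ge n$, which is $\le\sqrt{\veps}$; when $\veps\|x\|^2<n$ one instead absorbs this block into the bound together with the large-$j$ tail, using that $u+\Delta_1-\lambda_i\ge u+\Delta_1-\lambda_{\max}$ is bounded below appropriately. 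For the main sum over $j$, on each $I_j$ I would write $u+\Delta_1-\lambda_i = (u-\lambda_i)+\Delta_1 \ge 4^{j-1}+\Delta_{1,j}$, so that
\[
\sum_{i\in I_j}\frac{\langle x,x_i\rangle^2}{u+\Delta_1-\lambda_i}
\le \frac{|I_j|+h_j}{4^{j-1}+\Delta_{1,j}}.
\]

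Now I would distinguish the two regimes defining $\Delta_{1,j}$. If $h_j\le\veps^2 2^j\sqrt{|I_j|}$ then $\Delta_{1,j}=0$ and the block is $\le\frac{|I_j|}{4^{j-1}}+\frac{\veps^2 2^j\sqrt{|I_j|}}{4^{j-1}}$; the first term is comparable to $\overline{m}_A(u+\Delta_1)$ restricted to $I_j$ (since there $u+\Delta_1-\lambda_i< 4^j+\Delta_1$, but one must be slightly careful and instead compare to $4\sum_{i\in I_j}(u+\Delta_1-\lambda_i)^{-1}$ up to the constant $4$ coming from $4^j$ versus $4^{j-1}$), while the second term $\le 4\veps^2 2^j/\sqrt{|I_j|}\cdot(|I_j|/4^j)$ summed over $j$ telescopes geometrically and, via Lemma \ref{le:prop Ij}, is controlled by $\veps\sqrt{n}\sqrt{\max_\ell|I_\ell|/16^\ell}$. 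If instead $h_j>\veps^2 2^j\sqrt{|I_j|}$ then $\Delta_{1,j}=\veps^{-1}h_j$, so $\frac{|I_j|+h_j}{4^{j-1}+\veps^{-1}h_j}\le \frac{|I_j|}{4^{j-1}}+\frac{h_j}{\veps^{-1}h_j}=\frac{|I_j|}{4^{j-1}}+\veps$; but there are at most $\lfloor\log_4(n/\veps^2)\rfloor$ values of $j$ in play, and moreover the ``bad'' $j$'s are few because $\sum_j h_j$-type sums are constrained — more precisely one uses $|I_j|\ge (2^j)^2\cdot(\veps^2\sqrt{|I_j|}/|I_j|)\cdots$; the cleaner route is that $h_j>\veps^2 2^j\sqrt{|I_j|}$ forces $\sqrt{|I_j|}/2^j>$ something, and combined with $|I_j|<4^j$ one bounds the number of such $j$ or directly sums $\sum_j\frac{|I_j|}{4^{j-1}}$ against $\overline{m}_A(u+\Delta_1)$ plus a $\sqrt\veps$ error. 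Summing the two regimes over all $j\le\log_4(n/\veps^2)$, together with the truncation error from discarding $j>\log_4(n/\veps^2)$ (where $4^{j-1}\ge n/(4\veps^2)$ and $|I_j|<4^j$, so each such block contributes $O(\veps^2/\sqrt n)$ per unit and the total is negligible), yields the claimed inequality.

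The bookkeeping that I expect to be the main obstacle is matching constants in the comparison $\sum_{i\in I_j}\frac{\langle x,x_i\rangle^2}{u+\Delta_1-\lambda_i}\lesssim \overline{m}_A(u+\Delta_1)\big|_{I_j}+(\text{error})$: on $I_j$ the denominator $u-\lambda_i$ lives in $[4^{j-1},4^j)$, a factor-$4$ window, and one must also account for $\Delta_1$ appearing in $\overline{m}_A(u+\Delta_1)=\sum_i(u+\Delta_1-\lambda_i)^{-1}$ but not cleanly in $\sum_i(u-\lambda_i)^{-1}$; the shift $\Delta_{1,j}$ only ``fixes'' its own level set, so one needs that $\Delta_1\ge\Delta_{1,j}$ for every $j$ (immediate from nonnegativity of all summands) to guarantee $u+\Delta_1-\lambda_i\ge 4^{j-1}+\Delta_{1,j}$, and then that the residual discrepancy between $\frac{|I_j|}{4^{j-1}}$ and $4\sum_{i\in I_j}\frac{1}{u+\Delta_1-\lambda_i}$ is itself absorbable. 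The rest — the geometric summation of the $\veps^2 2^j\sqrt{|I_j|}/4^{j-1}$ terms using $|I_j|<4^j$ and Lemma \ref{le:prop Ij}, the count of bad levels, and the tail truncation — is routine once the constants $6$ and $8$ in the statement are tracked with a little slack.
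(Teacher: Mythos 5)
There is a genuine gap, and it sits exactly where you flagged "the main obstacle": your decomposition cannot produce the coefficient $1$ in front of $\overline{m}_A(u+\Delta_1)$. Starting from $\sum_{i\in I_j}\frac{\langle x,x_i\rangle^2}{u+\Delta_1-\lambda_i}\le\frac{|I_j|+h_j}{4^{j-1}+\Delta_{1,j}}$ and then comparing $\frac{|I_j|}{4^{j-1}}$ to $\sum_{i\in I_j}\frac{1}{u+\Delta_1-\lambda_i}$ necessarily loses a multiplicative factor (at least $4$ from the dyadic window, plus an uncontrolled $\Delta_1/4^{j-1}$), applied to the \emph{main} term. Since $\overline{m}_A(u+\Delta_1)$ may be close to $1$ and the lemma is later used precisely to keep $Q_1<1$, a bound of the form $4\,\overline{m}_A(u+\Delta_1)+\text{error}$ is useless; the discrepancy is of order $1$, not of order $\sqrt{\veps}$, so it is not "absorbable". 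The paper's proof avoids this entirely by centering first: it writes $Q_1(\Delta_1)=\overline{m}_A(u+\Delta_1)+\sum_{j\ge1}\sum_{i\in I_j}\frac{\langle x,x_i\rangle^2-1}{u+\Delta_1-\lambda_i}$, so the Stieltjes transform appears \emph{exactly}, with coefficient $1$ and no dyadic loss, and the level-set machinery is applied only to the fluctuations, yielding terms like $4\sum_j\frac{h_j}{\Delta_1+4^j}$ where the factor $4$ multiplies only small quantities. This centering is the one idea your plan is missing, and without it the argument fails.

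A second, independent problem is your treatment of the "bad" levels $j$ with $h_j>\veps^22^j\sqrt{|I_j|}$: bounding each such block by $\veps$ and summing over up to $\lfloor\log_4(n/\veps^2)\rfloor$ levels gives $\veps\log_4(n/\veps^2)$, which grows with $n$ and is not $O(\sqrt{\veps})$; your fallback ("the bad $j$'s are few because...") is not substantiated and is not how it works. The correct step bounds the whole sum at once by the mediant inequality: $\sum_{j\in J}\frac{h_j}{\Delta_1+4^j}\le\frac{\sum_{j\in J}h_j}{\sum_{j\in J}\Delta_{1,j}}=\veps$, using that $\Delta_1$ dominates the sum of all the shifts $\Delta_{1,j}=\veps^{-1}h_j$ simultaneously. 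Two minor remarks: your "small gap" case $u-\lambda_i<1$ is vacuous, since $\overline{m}_A(u)<1$ forces $u-\lambda_i>1$ for every $i$; and the tail $j>\log_4(n/\veps^2)$ is where the shift $\Delta_1'$ is actually needed (it gives $\frac{4\|x\|^2}{\Delta_1+n/\veps^2}\le4\sqrt{\veps}$), not the small-gap block. Your handling of the $J'$ levels via Lemma~\ref{le:prop Ij} and the geometric sum producing $8\veps\sqrt{n}\sqrt{\max_\ell|I_\ell|/16^\ell}$ does match the paper.
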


\begin{proof}
  Denote by $J$ the set of all $j\le \log_4(n/\veps^2)$ such that $h_j>
  \veps^2 2^j\sqrt{|I_j|}$ and let $J'$ be its complement inside
  $\{1,\dots,\lfloor\log_4(n/\veps^2)\rfloor\}$. Then
  \begin{align*}
    Q_1(\Delta_1)&=\overline{m}_A(u+\Delta_1)
    +\sum_{j=1}^\infty\sum_{i\in I_j}\frac{\langle x,x_i\rangle^2-1}{u+\Delta_1-\lambda_i}\\
    &\le\overline{m}_A(u+\Delta_1) + \sum_{j>\log_4(n/\veps^2)}
    \sum_{i\in I_j}\frac{\langle x,x_i\rangle^2}{u+\Delta_1-\lambda_i}+
    4\sum_{j\in J'}\frac{h_j}{\Delta_1+4^j}
    +4\sum_{j\in J}\frac{h_j}{\Delta_1+4^j}\\
    &=:\overline{m}_A(u+\Delta_1)+ (*)+4(**)+4(***).
  \end{align*}
  We shall estimate the last three quantities separately. First, note that
  \[
  (*)\le \frac{4\|x\|^2}{\Delta_1+n/\veps^2}\le 
  \frac{4\sqrt{\veps}\|x\|^2}{\|x\|^2\IND_{\{\veps\|x\|^2\ge n\}}+n/\veps^{3/2}}
  \le 4\sqrt{\veps}.
  \]
  Next,
  \[
  (**)\le \veps^2\sum_{j\in J'}\sqrt{\frac{|I_j|}{4^j}}
  \le \veps^2\sqrt{\max_{\ell\in\dN}\frac{|I_\ell|}{16^\ell}}\sum_{j\le\log_4(n/\veps^2)}2^j
  \le 2\veps\sqrt{n}\sqrt{\max_{\ell\in\dN}\frac{|I_\ell|}{16^\ell}}.
  \]
  Finally, we have
  \[
  (***)\le \frac{\sum_{j\in J}h_j}{\sum_{j\in J}\Delta_{1,j}}\le\veps.
  \]
  Summing up the estimates, we get the result.
\end{proof}

Denote 
\[
F_2(\Delta):=\Bigl(\sum_{i=1}^n\frac{1}{(u+\Delta-\lambda_i)(u-\lambda_i)}\Bigr)^{-1}
\sum_{i=1}^n\frac{\langle x,x_i\rangle^2}{\bigl(u+\Delta-\lambda_i\bigr)^2},\quad\Delta\ge 0
\]
(clearly, $F_2(\Delta)=\Delta Q_2(\Delta)$ for all $\Delta>0$).

\begin{lemma}[Definition of $\Delta_2$]\label{le:Q2}
  Let $A$, $u$ and $x$ be as above, with $u>\lambda_1$, and let $\alpha>0$ satisfy
  $\overline{m}_A(u)+\alpha<1$. Define $\Delta_2$ according to the following
  procedure: if 
  \[
  (1+\alpha)F_2(0)\le
  \alpha(u-\lambda_1)(1-\overline{m}_A(u)-\alpha)
  \]
  then set
  \[
  \Delta_2:=\frac{(1+\alpha)F_2(0)}{1-\overline{m}_A(u)-\alpha},
  \]
  otherwise, take the smallest non-negative integer $j$ such that
  \[
  Q_2\bigl(2^j(u-\lambda_1)\bigr)\le 1-\overline{m}_A(u)-\alpha
  \]
  and let
  \[
  \Delta_2:=2^j(u-\lambda_1).
  \]
  Then $\Delta_2=0$ whenever $x=0$, and, for $x\neq 0$, we have $\Delta_2\neq 0$ and
  \[
  Q_2(\Delta_2)\le 1-\overline{m}_A(u)-\alpha.
  \]
\end{lemma}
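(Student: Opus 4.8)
The plan is to dispose of the degenerate case $x=0$ first, and then handle the two branches in the definition of $\Delta_2$ separately. If $x=0$ then $F_2(0)=0$, so $(1+\alpha)F_2(0)=0\le\alpha(u-\lambda_1)(1-\overline{m}_A(u)-\alpha)$ (the right-hand side being nonnegative since $u>\lambda_1$, $\alpha>0$, and $\overline{m}_A(u)+\alpha<1$); hence the first branch applies and yields $\Delta_2=0$, while $Q_2\equiv 0$, so there is nothing further to verify. Assume henceforth $x\neq 0$, so that $\sum_i\langle x,x_i\rangle^2=\|x\|^2>0$ and consequently $F_2(0)=\bigl(\sum_i(u-\lambda_i)^{-2}\bigr)^{-1}\sum_i\langle x,x_i\rangle^2(u-\lambda_i)^{-2}>0$.

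For the first branch, by definition $\Delta_2=(1+\alpha)F_2(0)/(1-\overline{m}_A(u)-\alpha)$, which is positive (so $\Delta_2\neq 0$) and, by the branch hypothesis, satisfies $\Delta_2\le\alpha(u-\lambda_1)$. Since $\lambda_1=\lambda_{\max}$, we have $u-\lambda_1\le u-\lambda_i$ for every $i$, whence $0<u+\Delta_2-\lambda_i\le(1+\alpha)(u-\lambda_i)$ for all $i$. I would then bound the numerator and denominator of $Q_2(\Delta_2)=\frac{x^\top(u+\Delta_2-A)^{-2}x}{\overline{m}_A(u)-\overline{m}_A(u+\Delta_2)}$: the numerator $\sum_i\langle x,x_i\rangle^2(u+\Delta_2-\lambda_i)^{-2}$ is at most $\sum_i\langle x,x_i\rangle^2(u-\lambda_i)^{-2}$ because $\Delta_2>0$, while the denominator equals $\Delta_2\sum_i(u-\lambda_i)^{-1}(u+\Delta_2-\lambda_i)^{-1}\ge\frac{\Delta_2}{1+\alpha}\sum_i(u-\lambda_i)^{-2}$ by the uniform bound above. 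Dividing, $Q_2(\Delta_2)\le(1+\alpha)F_2(0)/\Delta_2=1-\overline{m}_A(u)-\alpha$, as required.

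For the second branch the construction already forces $Q_2(\Delta_2)\le1-\overline{m}_A(u)-\alpha$, so the only points to check are that the integer $j$ exists and that $\Delta_2\neq 0$. For existence I would observe that $x^\top(u+\Delta-A)^{-2}x=\sum_i\langle x,x_i\rangle^2(u+\Delta-\lambda_i)^{-2}\le\|x\|^2\Delta^{-2}$ (using $u-\lambda_i>0$), whereas $\overline{m}_A(u)-\overline{m}_A(u+\Delta)$ increases to $\overline{m}_A(u)>0$ as $\Delta\to\infty$; hence $Q_2(\Delta)\to 0$, so $Q_2\bigl(2^j(u-\lambda_1)\bigr)\le1-\overline{m}_A(u)-\alpha$ for all sufficiently large $j$, and the least such $j$ exists. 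Then $\Delta_2=2^j(u-\lambda_1)\ge u-\lambda_1>0$, so $\Delta_2\neq 0$.

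I expect the first branch to be the only step that calls for a genuine idea: one has to notice that the threshold in the branch condition is tuned precisely so that $\Delta_2\le\alpha(u-\lambda_1)$, and that---$\lambda_1$ being the largest eigenvalue---this single inequality controls $u+\Delta_2-\lambda_i$ by $(1+\alpha)(u-\lambda_i)$ \emph{uniformly} in $i$, which is exactly what is needed to compare the denominator of $Q_2(\Delta_2)$ with $\sum_i(u-\lambda_i)^{-2}$. Everything else is routine manipulation of the sums defining $Q_2$ and $F_2$, together with the easy $\Delta\to\infty$ asymptotics used in the second branch.
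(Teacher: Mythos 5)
Your proof is correct and follows essentially the same route as the paper: in the first branch you exploit $\Delta_2\le\alpha(u-\lambda_1)$ to get the uniform bound $u+\Delta_2-\lambda_i\le(1+\alpha)(u-\lambda_i)$, which is exactly how the paper derives $F_2(\Delta_2)\le(1+\alpha)F_2(0)$ and hence $Q_2(\Delta_2)\le 1-\overline{m}_A(u)-\alpha$. Your explicit verification that the integer $j$ exists in the second branch (via $Q_2(\Delta)\to 0$ as $\Delta\to\infty$) is a detail the paper leaves implicit, but it is a welcome addition rather than a divergence.
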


\begin{proof}
  The case $x=0$ is trivial, so further we assume that $x\neq 0$ implying $\Delta_2\neq 0$.
  First, suppose that 
  \[
  (1+\alpha)F_2(0)\leq \alpha(u-\lambda_1)(1-\overline{m}_A(u)-\alpha).
  \]
  Then, in view of the definition of $\Delta_2$, we have $\Delta_2\leq
  \alpha(u-\lambda_1)$, and
  \begin{align*}
    F_2(\Delta_2)%
    &\leq
    \Bigl(\sum_{i=1}^n\frac{1}{(u+\Delta_2-\lambda_i)(u-\lambda_i)}\Bigr)^{-1}
    \sum_{i=1}^n\frac{\langle x,x_i\rangle^2}{\bigl(u-\lambda_i\bigr)^2}\\
    &\leq (1+\alpha)F_2(0).
  \end{align*}
  Hence,
  \[
  Q_2(\Delta_2)=F_2(\Delta_2)/\Delta_2\leq 1-\overline{m}_A(u)-\alpha.
  \]
  If 
  \[
  (1+\alpha)F_2(0)> \alpha(u-\lambda_1)(1-\overline{m}_A(u)-\alpha)
  \]
  then the statement follows directly from the definition of $\Delta_2$.
\end{proof}

\subsection{Randomization and control of expectations}

In this subsection, we ``randomize'' Lemma~\ref{le:Q1} and
Lemma~\ref{le:Q2}. Let, as before, $A$ be an $n\times n$ (non-random) positive
semidefinite matrix with eigenvalues $\lambda_1\ge\dots\ge\lambda_n\ge 0$, let
$u>\lambda_1$ satisfy $\overline{m}_A(u)<1$. We define (random) quantities
$\Delta_1$ and $\Delta_2$ as in Lemma~\ref{le:Q1} and Lemma~\ref{le:Q2},
replacing the fixed vector $x$ with a random isotropic vector $X$. In the
following two lemmas, we will estimate the expectations of $\Delta_1$ and
$\Delta_2$.

\begin{lemma}[Control of $\dE\Delta_1$]\label{le:ED1}
  Let $\veps\in(0,1/4]$ and let $A$, $\lambda_i$, $u$ be as above and $I_j$ be defined
  according to \eqref{eq:def level sets}. Assume additionally that
  \[
  \max_{\ell\in\dN}\frac{|I_\ell|}{16^\ell}\le \frac{1}{\veps n}.
  \]
  Further, let $X\in\dR^n$ be an isotropic random vector and $K\ge 1$. Assume
  that for any non-zero orthogonal projection $\proj:\dR^n\to\dR^n$ we have
  \[
  \dP\bigl\{\|\proj(X)\|^2-\rank\proj\ge t\bigr\} %
  \le \frac{g(\rank\proj)\rank\proj}{t^2},\;\;t\geq f(\rank\proj)\rank\proj,
  \]
  where functions $f:\dN\to[0,1]$ and $g:\dN\to[0,K]$ satisfy
  \[
  g(k)\leq \veps^{11/2}%
  \quad\text{and}\quad%
  f(k)\leq \veps^2\;\;\;\;\forall k\geq \veps^{17}n.
  \]
  Define a random variable $\Delta_1$ as in Lemma~\ref{le:Q1} replacing
  the non-random vector $x$ with $X$.
  Then
  \[
  \dE\Delta_1\leq 32K\sqrt{\veps}.
  \]
\end{lemma}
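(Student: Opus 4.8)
The plan is to bound $\dE\Delta_1'$ and each $\dE\Delta_{1,j}$ separately; the only real input is the assumed tail bound applied to well-chosen projections, combined with a layer-cake computation of truncated first moments. For $\dE\Delta_1'$ I would apply the hypothesis to $\proj=I_n$, whose rank $n$ satisfies $n\ge\veps^{17}n$, so $g(n)\le\veps^{11/2}$ and $f(n)\le\veps^2$. Writing
\[
\dE\bigl(\NRM{X}^2\IND_{\{\NRM{X}^2\ge n/\veps\}}\bigr)
=\frac{n}{\veps}\,\dP\bigl(\NRM{X}^2\ge n/\veps\bigr)
+\int_{n/\veps}^\infty\dP\bigl(\NRM{X}^2\ge s\bigr)\,ds
\]
and using that $\{\NRM{X}^2\ge s\}=\{\NRM{X}^2-n\ge s-n\}$ with $s-n\ge\tfrac34 s\ge\veps^2 n=f(n)n$ on the range of integration (here $\veps\le\tfrac14$ forces $s\ge n/\veps\ge 4n$), the tail bound gives $\dP(\NRM{X}^2\ge s)\le 16\veps^{11/2}n/(9s^2)$; integrating and multiplying by $\veps^{-1/2}$ yields $\dE\Delta_1'\le\tfrac{32}{9}\veps^{6}\le 4\veps$.

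Now fix $j\le\log_4(n/\veps^2)$ with $I_j\neq\varnothing$, write $r_j:=|I_j|$, let $\proj_j$ be the orthogonal projection onto $\mathrm{span}\{x_i:i\in I_j\}$ (so $\rank\proj_j=r_j$), and set $h_j:=\NRM{\proj_j X}^2-r_j$ and $a_j:=\veps^2 2^j\sqrt{r_j}$. The first step is to check that $a_j\ge f(r_j)r_j$ in all cases, so the tail hypothesis applies from $t=a_j$ on. From $\overline{m}_A(u)<1$ one has $r_j<4^j$, hence $2^j>\sqrt{r_j}$ and $a_j>\veps^2 r_j$; this settles the case $r_j\ge\veps^{17}n$, where $f(r_j)\le\veps^2$. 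For $r_j<\veps^{17}n$ I would instead use the standing assumption $r_j\le 16^j/(\veps n)$, i.e.\ $2^j\ge(\veps n r_j)^{1/4}$, which gives $a_j\ge\veps^{9/4}n^{1/4}r_j^{3/4}\ge r_j\ge f(r_j)r_j$ since $r_j<\veps^{17}n\le\veps^9 n$. Then the layer-cake identity
\[
\dE\bigl(h_j\IND_{\{h_j>a_j\}}\bigr)
=a_j\,\dP(h_j>a_j)+\int_{a_j}^\infty\dP(h_j>s)\,ds
\le\frac{2\,g(r_j)\,r_j}{a_j},
\]
together with $\dE\Delta_{1,j}=\veps^{-1}\dE(h_j\IND_{\{h_j>a_j\}})$ and $r_j/a_j=\sqrt{r_j}/(\veps^2 2^j)$, gives $\dE\Delta_{1,j}\le 2g(r_j)\sqrt{r_j}/(\veps^3 2^j)$.

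It remains to sum over $j$. Using $\sqrt{r_j}/2^j\le 2^j/\sqrt{\veps n}$ (again from $r_j\le 16^j/(\veps n)$), each term is $\le 2g(r_j)2^j/(\veps^{7/2}\sqrt n)$. For the indices with $r_j\ge\veps^{17}n$ one has $g(r_j)\le\veps^{11/2}$, so the term is $\le 2\veps^2 2^j/\sqrt n$; since $\sum_{j\le\log_4(n/\veps^2)}2^j\le 2\sqrt n/\veps$, these indices contribute at most $4\veps$. For the indices with $r_j<\veps^{17}n$ only $g(r_j)\le K$ is available, but there $\sqrt{r_j}/2^j\le\veps^{17/2}\sqrt n/2^j$ as well, so the term is $\le 2K\min\bigl(2^j/(\veps^{7/2}\sqrt n),\,\veps^{11/2}\sqrt n/2^j\bigr)$; the two quantities cross near $2^j=\veps^{9/2}\sqrt n$, where each equals $\veps$, and since the $2^j$ are distinct powers of $2$ the two one-sided geometric series each sum to at most $2\veps$, so these indices contribute at most $8K\veps$. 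Combining, $\dE\Delta_1\le 4\veps+4\veps+8K\veps\le 16K\veps\le 32K\sqrt\veps$, using $K\ge 1$ and $\veps\le\tfrac14$.

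The main obstacle is the last step: controlling the contribution of the ``low-rank'' level sets $r_j<\veps^{17}n$, for which the sharp bound $g(r_j)\le\veps^{11/2}$ is unavailable and $g(r_j)$ may be as large as $K$. The point is that the smallness $r_j<\veps^{17}n$ produces a second, complementary upper bound $\sqrt{r_j}/2^j\le\veps^{17/2}\sqrt n/2^j$ on top of the universal one $\sqrt{r_j}/2^j\le 2^j/\sqrt{\veps n}$; the minimum of the two decays geometrically away from the dyadic scale $2^j\sim\veps^{9/2}\sqrt n$ and sums to $O(\veps^4)$ over all scales, which is exactly what is needed to absorb the $\veps^{-3}$ loss coming from the $\veps^{-1}$ prefactor in $\Delta_{1,j}$ and the $\veps^{-2}$ in the threshold $a_j$. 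The remaining pieces --- the layer-cake estimate, the verification $a_j\ge f(r_j)r_j$, and the bound on $\dE\Delta_1'$ --- are routine.
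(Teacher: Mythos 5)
Your proof is correct and follows essentially the same route as the paper's: a layer-cake bound for $\dE\Delta_1'$ and for each $\dE\Delta_{1,j}$, verification that the threshold $\veps^2 2^j\sqrt{|I_j|}$ dominates $f(|I_j|)\,|I_j|$ via the dichotomy $|I_j|\gtrless\veps^{17}n$, and the two complementary estimates $\sqrt{|I_j|}/2^j\le 2^j/\sqrt{\veps n}$ and, in the low-rank case, $\sqrt{|I_j|}/2^j\le\veps^{17/2}\sqrt{n}/2^j$. Your only deviation is in summing the low-rank terms via the minimum of the two geometric bounds around the crossing scale $2^j\sim\veps^{9/2}\sqrt n$, which is a mild sharpening of the paper's argument (the paper uses the flat bound $2K\veps$ on the $O(\log_4(1/\veps))$ largest scales and so picks up a logarithm, hence its $32K\sqrt\veps$ versus your $16K\veps$).
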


\begin{proof}
  Let us estimate separately the expectations of $\Delta_1'$ and
  $\Delta_{1,j}$, ($j\leq \log_4(n/\veps^2)$), where the quantities are defined as in
  Lemma~\ref{le:Q1}. First,
  \begin{align*}
    \dE \Delta_1' %
    &=\veps^{-3/2}n\dP\{\|X\|^2\ge n/\veps\} %
    +\int_{\veps^{-3/2}n}^\infty \dP\{\|X\|^2\geq\sqrt{\veps}\tau\}\,d\tau\\
    &\leq\frac{\veps^{4}}{(\veps^{-1}-1)^2}
    +\int_{\veps^{-3/2}n-\veps^{-1/2}n}^\infty \frac{\veps^{9/2}n}{\tau^2}\,d\tau\\
    &\leq \frac{\veps^{4}}{(\veps^{-1}-1)^2}+\frac{\varepsilon^5}{\varepsilon^{-1}-1}\\
    &\leq 4\veps^6.
  \end{align*}
  Next, fix any natural $j\le\log_4(n/\veps^2)$ with $I_j\neq\varnothing$. We
  let $h_j$ to be defined as in \eqref{eq:def hj}, with the random vector $X$
  replacing the non-random $x$. We have
  \[
  \dE\Delta_{1,j}%
  =\veps 2^j \sqrt{|I_j|}\dP\bigl\{h_j> \veps^2 2^j\sqrt{|I_j|}\bigr\}%
  +\int_{\veps 2^j\sqrt{|I_j|}}^\infty%
  \dP\{h_j\ge \veps\tau\}\,d\tau.
  \]
  Note that $h_j=\|\proj_j(X)\|^2-\rank\proj_j$ and $|I_j|=\rank\proj_j$,
  where $\proj_j$ is the orthogonal projection onto the span of $\{x_i,\,i\in
  I_j\}$. Let us consider two cases.

  \noindent 1) $|I_j|\geq \veps^{17}n$. Since $\overline{m}_A(u)<1$,
  we have $|I_j|\leq 4^j$, and $\veps^2 |I_j|\leq
  \veps^2 2^j\sqrt{|I_j|}$. Hence, from the above formula for $\dE\Delta_{1,j}$ and
  the conditions on projections of $X$ we obtain
  \begin{align*}
    \dE\Delta_{1,j}&\leq \frac{\veps^{5/2}\sqrt{|I_j|}}{2^j}
    +\frac{\veps^{5/2}\sqrt{|I_j|}}{2^j}\\
    &\leq 2^{j+1}\veps^{5/2}\sqrt{\max_{\ell\in\dN}\frac{|I_\ell|}{16^\ell}}\\
    &\leq 2^{j+1}\veps^{2} n^{-1/2}.
  \end{align*}
  
  \noindent 2) $|I_j|< \veps^{17}n$. Then, in view of Lemma~\ref{le:prop Ij},
  $|I_j|\leq \veps^8 4^j$, implying
  \[
  f(\rank\proj_j)\leq 1 %
  \leq |I_j|%
  =\sqrt{|I_j|}\sqrt{|I_j|}%
  \leq \veps^4 2^j\sqrt{|I_j|}%
  <\veps^2 2^j\sqrt{|I_j|}.
  \]
  Applying again the formula for $\dE\Delta_{1,j}$ together with the projection conditions
  and Lem\-ma~\ref{le:prop Ij}, we obtain
  \[
  \dE\Delta_{1,j}\le\frac{2K\sqrt{|I_j|}}{\veps^3 2^j}\leq \min\Bigl(
  \frac{2^{j+1}K}{\veps^{7/2}\sqrt{n}},2\veps K\Bigr).
  \]
  Thus, in any case $\dE\Delta_{1,j}\le 2^{j+1}\veps^{2} n^{-1/2}+\min\Bigl(
  \frac{2^{j+1}K}{\veps^{7/2}\sqrt{n}},2\veps K\Bigr)$.
  Summing over all $j\leq \log_4(n/\veps^2)$, we get
  \begin{align*}
  \sum_{j\le\log_4(n/\veps^2)}\dE\Delta_{1,j}&\leq 4 \veps+
  \sum_{j\le\log_4(\veps^9 n)}\frac{2^{j+1}K}{\veps^{7/2}\sqrt{n}}+2K\veps(-11\log_4\veps+1)\\
  &\leq 4\veps+4K\veps+22K\sqrt{\veps}+2K\veps,
  \end{align*}
  and the statement follows.
\end{proof}

The next lemma does not require assumptions on projections of $X$ other than of rank one.

\begin{lemma}[Control of $\dE\Delta_2$]\label{le:ED2}
  Let $A$ and $\lambda_i$ ($i\leq n$) be as above and
  let $X$ be an isotropic random vector in $\dR^n$ such that for some
  $\kappa,K>0$ we have $\max_{\NRM{y}=1}\dE|\langle X,y\rangle|^{2+\kappa}\le
  K$, and let $u>\lambda_1$ and $\alpha\in(0,1/2]$ satisfy
  $\overline{m}_A(u)+\alpha<1$. Then for
  \[
  w_{\ref{le:ED2}}:=K2^{1+\kappa}/(\alpha^{1+\kappa/2}(1-2^{-\kappa/2}))
  \]
  we have
  \[
  \dE\Delta_2\le \frac{1+\alpha}{1-\overline{m}_A(u)-\alpha}%
  \Bigl(1+%
  \frac{w_{\ref{le:ED2}}}{(1-\overline{m}_A(u)-\alpha)^{\kappa/2}(u-\lambda_1)^{\kappa/2}}\Bigr),
  \]
  where $\Delta_2$ is defined as in Lemma~\ref{le:Q2}, with $X$ replacing $x$.
\end{lemma}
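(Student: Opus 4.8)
The plan is to estimate $\dE\Delta_2$ by splitting the expectation according to which of the two branches in the definition of $\Delta_2$ from Lemma~\ref{le:Q2} is active, and to control the deterministic magnitude of $\Delta_2$ in the ``bad'' branch by a tail argument using the $(2+\kappa)$-moment bound on the marginals of $X$. Recall that $F_2(0)=\sum_i\langle X,x_i\rangle^2(u-\lambda_i)^{-2}$ and that $\overline{m}_A(u)=\sum_i(u-\lambda_i)^{-1}$, so by isotropy $\dE F_2(0)=\sum_i(u-\lambda_i)^{-2}$, which is at most $(u-\lambda_1)^{-1}\overline{m}_A(u)\le (u-\lambda_1)^{-1}$. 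The first branch gives $\Delta_2=(1+\alpha)F_2(0)/(1-\overline{m}_A(u)-\alpha)$ directly, whose expectation is exactly $(1+\alpha)\dE F_2(0)/(1-\overline{m}_A(u)-\alpha)$; this already accounts for the ``$1$'' in the bracket of the claimed bound, since $\dE F_2(0)\le(u-\lambda_1)^{-1}\cdot 1$ is dominated after recalling $F_2(0)=\Delta Q_2$ evaluated appropriately. So the heart of the matter is the second branch.

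First I would analyse the second branch. There, $\Delta_2=2^j(u-\lambda_1)$ where $j$ is the least non-negative integer with $Q_2(2^j(u-\lambda_1))\le 1-\overline{m}_A(u)-\alpha$. Writing $\Delta=2^j(u-\lambda_1)$, one has $u+\Delta-\lambda_i\ge \Delta$ for all $i$, so
\[
Q_2(\Delta)=F_2(\Delta)/\Delta\le \frac{1}{\Delta}\Bigl(\sum_i\frac{1}{(u+\Delta-\lambda_i)(u-\lambda_i)}\Bigr)^{-1}\sum_i\frac{\langle X,x_i\rangle^2}{(u+\Delta-\lambda_i)^2}.
\]
A cleaner route: since the branch is taken only when $(1+\alpha)F_2(0)>\alpha(u-\lambda_1)(1-\overline{m}_A(u)-\alpha)$, on this event $F_2(0)$ is large, of order at least $\alpha(u-\lambda_1)(1-\overline{m}_A(u)-\alpha)/(1+\alpha)$. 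The index $j$ is roughly $\log_2$ of the ratio $F_2(0)/\bigl((u-\lambda_1)(1-\overline{m}_A(u)-\alpha)\bigr)$, up to constants, because $Q_2(\Delta)\le (1+\text{const})F_2(0)/\Delta$ once $\Delta\gtrsim (u-\lambda_1)$; hence $\Delta_2\lesssim (1+\alpha)F_2(0)/(1-\overline{m}_A(u)-\alpha)$ plus a geometric correction. The point is that in \emph{both} branches $\Delta_2\le C(\alpha)\,F_2(0)/(1-\overline{m}_A(u)-\alpha)$ for an explicit $C(\alpha)$, but this is only an $L^1$ bound on $F_2(0)$, which gives the ``$1$'' term; the extra term with $w_{\ref{le:ED2}}$ and the exponent $\kappa/2$ must come from a more careful tail estimate of $\Delta_2$ at large values, exploiting that $\Delta_2$ can only be a large dyadic multiple of $(u-\lambda_1)$ when $F_2(0)$ is correspondingly large, and then using Markov with the $(1+\kappa/2)$-th moment of $F_2(0)$ (equivalently the $(2+\kappa)$-th moment of the marginals) to bound $\dP\{\Delta_2\ge 2^j(u-\lambda_1)\}$ by a summable-in-$j$ quantity.

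The key computation I expect to do is: integrate the tail bound
\[
\dP\{\Delta_2\ge 2^j(u-\lambda_1)\}\le \dP\{F_2(0)\ge c\,2^j(u-\lambda_1)(1-\overline{m}_A(u)-\alpha)\}\le \frac{\dE\bigl(F_2(0)^{1+\kappa/2}\bigr)}{\bigl(c\,2^j(u-\lambda_1)(1-\overline{m}_A(u)-\alpha)\bigr)^{1+\kappa/2}},
\]
and bound $\dE(F_2(0)^{1+\kappa/2})$ using the $(2+\kappa)$-moment hypothesis on $\langle X,y\rangle$ together with the fact that $\sum_i(u-\lambda_i)^{-2}\le (u-\lambda_1)^{-1}$ and a convexity/power-mean inequality; summing the resulting geometric series in $j$ produces the factor $(1-2^{-\kappa/2})^{-1}$ and the power $2^{1+\kappa}$ visible in $w_{\ref{le:ED2}}$. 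Then $\dE\Delta_2\le(u-\lambda_1)\sum_{j\ge 0}2^j\dP\{\Delta_2\ge 2^j(u-\lambda_1)\}$ plus the contribution of the first branch.

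The main obstacle I foresee is getting the bookkeeping of constants exactly right so that everything collapses into the stated closed form, in particular matching the power $\kappa/2$ of $(1-\overline{m}_A(u)-\alpha)(u-\lambda_1)$ in the denominator. This requires bounding $\dE(F_2(0)^{1+\kappa/2})$ cleanly: one writes $F_2(0)=\sum_i c_i\langle X,x_i\rangle^2$ with $c_i=(u-\lambda_i)^{-2}/\sum_\ell(u-\lambda_\ell)^{-2}$ a probability vector (times the normalising factor $\sum_\ell(u-\lambda_\ell)^{-2}$), applies Jensen in the form $\bigl(\sum_i c_i Z_i\bigr)^{1+\kappa/2}\le \sum_i c_i Z_i^{1+\kappa/2}$ with $Z_i=\langle X,x_i\rangle^2$, and then uses $\dE|\langle X,x_i\rangle|^{2+\kappa}\le K$; this yields $\dE(F_2(0)^{1+\kappa/2})\le K\bigl(\sum_\ell(u-\lambda_\ell)^{-2}\bigr)^{1+\kappa/2}\le K(u-\lambda_1)^{-(1+\kappa/2)}$ after using $\overline{m}_A(u)<1$ again. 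Feeding this into the geometric sum and choosing the comparison constant $c$ (which arises from the precise branch condition and the estimate $Q_2(\Delta)\le (1+O(1))F_2(0)/\Delta$ for $\Delta\ge u-\lambda_1$) should reproduce $w_{\ref{le:ED2}}=K2^{1+\kappa}/(\alpha^{1+\kappa/2}(1-2^{-\kappa/2}))$ and complete the proof.
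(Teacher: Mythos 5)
Your overall architecture (dyadic decomposition of $\Delta_2$, Markov's inequality with $(1+\kappa/2)$-th moments, geometric series in $j$) is the same as the paper's, and your Jensen-based moment bound is essentially the paper's Minkowski argument. But there are two problems, one of which is a genuine gap.

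First, a definitional slip: the paper's $F_2(\Delta)$ carries the normalizing prefactor $\bigl(\sum_i (u+\Delta-\lambda_i)^{-1}(u-\lambda_i)^{-1}\bigr)^{-1}$, which you dropped. With the correct definition the weights $\beta_i$ in $F_2(0)=\sum_i\beta_i\langle X,x_i\rangle^2$ sum to exactly $1$, so $\dE F_2(0)=1$ by isotropy; this is precisely what produces the clean ``$1$'' in the bracket of the claimed bound. Your unnormalized version gives $\dE F_2(0)=\sum_i(u-\lambda_i)^{-2}$ and the bookkeeping no longer matches the statement.

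Second, and more seriously: your reduction of the event $\{\Delta_2\ge 2^j(u-\lambda_1)\}$, $j\ge 1$, to a tail event for $F_2(0)$ rests on the claim that $Q_2(\Delta)\le C\,F_2(0)/\Delta$ once $\Delta$ is at least of order $u-\lambda_1$, i.e.\ that $F_2(\Delta)\le C\,F_2(0)$ pointwise. This is false. Writing $a_i:=u-\lambda_i$, take $n=2$, $a_1=1$, $a_2=T$ large, $\langle X,x_1\rangle^2=0$, $\langle X,x_2\rangle^2=1$, and $\Delta=T$: then $F_2(0)\approx T^{-2}$ while $F_2(T)\approx (4T)^{-1}$, so the ratio $F_2(\Delta)/F_2(0)$ is of order $T$ and unbounded. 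The reason is that the weight vector $\beta_i(\Delta)$ shifts mass toward large $a_i$ as $\Delta$ grows, so $F_2(\Delta)$ and $F_2(0)$ are genuinely different weighted averages of the $\langle X,x_i\rangle^2$. The paper sidesteps this entirely: by minimality of $j$ one has $F_2\bigl(2^{j-1}(u-\lambda_1)\bigr)>2^{j-1}(u-\lambda_1)(1-\overline{m}_A(u)-\alpha)$ on the $j$-th event, and the moment bound $\dE F_2(r)^{1+\kappa/2}\le K$ holds \emph{uniformly in $r\ge0$} (the weights form a sub-probability vector for every $r$), so Markov is applied to $F_2$ at each dyadic scale separately rather than to $F_2(0)$. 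Your proof is repaired by exactly this substitution; as written, the step transferring the tail to $F_2(0)$ does not go through.
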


\begin{proof}
  An inspection of the definition of $\Delta_2$ in Lemma~\ref{le:Q2}
  immediately shows that
  \[
  \Delta_2\le \frac{(1+\alpha)F_2(0)}{1-\overline{m}_A(u)-\alpha}+\sum_{j=0}^\infty
  2^j(u-\lambda_1)\chi_j,
  \]
  where $\chi_0$ is the indicator function of the event
  \[
  \bigl\{(1+\alpha)F_2(0)> \alpha(u-\lambda_1)(1-\overline{m}_A(u)-\alpha)\bigr\}
  \]
  and for each $j>0$, $\chi_j$ is the indicator of the event
  \[
  \bigl\{Q_2\bigl(2^{j-1}(u-\lambda_1)\bigr)> 1-\overline{m}_A(u)-\alpha\bigr\}.
  \]
  Note that for each fixed number $r\ge 0$, $F_2(r)$ is a random variable
  representable in the form
  \[
  F_2(r)=\sum_{i=1}^n \beta_i\langle X,x_i\rangle^2,
  \]
  where the numbers $\beta_i=\beta_i(r)$ are positive and
  $\sum_{i=1}^n\beta_i\le 1$. By the Minkowski inequality, for any $p\ge 1$
  we have
  \[
  \bigl(\dE F_2(r)^{p}\bigr)^{1/p} %
  \le\sum_{i=1}^n \bigl(\dE(\beta_i\langle X,x_i\rangle^2)^{p}\bigr)^{1/p} %
  =\sum_{i=1}^n \beta_i\bigl(\dE|\langle X,x_i\rangle|^{2p}\bigr)^{1/p} %
  \le \max_{\NRM{y}=1}\bigl(\dE|\langle X,y\rangle|^{2p}\bigr)^{1/p}.
  \]
  In particular, $\dE F_2(r)^{1+\kappa/2}\le \max_{\NRM{y}=1}\dE|\langle
  X,y\rangle|^{2+\kappa}\le K$. Hence, from the definition of the indicator
  functions $\chi_j$ and applying Markov's inequality, we get
  \[
  \dE\chi_0 %
  \le %
  K\bigl(\alpha(1+\alpha)^{-1}(u-\lambda_1)(1-\overline{m}_A(u)-\alpha)\bigr)^{-1-\kappa/2},
  \]
  and for every $j\ge 1$:
  \[
  \dE\chi_j %
  \le %
  K\bigl(2^{j-1}(u-\lambda_1)(1-\overline{m}_A(u)-\alpha)\bigr)^{-1-\kappa/2}.
  \]
  Finally, we obtain
  \begin{align*}
    \dE\Delta_2&\le \frac{1+\alpha}{1-\overline{m}_A(u)-\alpha}\\
    &\qquad+K(u-\lambda_1)\bigl(\alpha(1+\alpha)^{-1}(u-\lambda_1)(1-\overline{m}_A(u)-\alpha)\bigr)^{-1-\kappa/2}\\
    &\qquad+
    \sum_{j=1}^\infty 2^jK(u-\lambda_1)%
    \bigl(2^{j-1}(u-\lambda_1)(1-\overline{m}_A(u)-\alpha)\bigr)^{-1-\kappa/2}\\
    &= \frac{1+\alpha}{1-\overline{m}_A(u)-\alpha}\\
    &\qquad+\frac{K}
    {(1-\overline{m}_A(u)-\alpha)^{1+\kappa/2}(u-\lambda_1)^{\kappa/2}}
    \Bigl(\frac{(1+\alpha)^{1+\kappa/2}}{\alpha^{1+\kappa/2}}
    +\sum_{j=1}^\infty 2^{1+\kappa/2-\kappa j/2}\Bigr)\\
    &\le \frac{(1+\alpha)}{1-\overline{m}_A(u)-\alpha}
    \Bigl(1+\frac{K(1+\alpha)^{\kappa/2}}{\alpha^{1+\kappa/2}
      (1-\overline{m}_A(u)-\alpha)^{\kappa/2}(u-\lambda_1)^{\kappa/2}}
    \sum_{j=0}^\infty 2^{1+\kappa/2-\kappa j/2}\Bigr),
  \end{align*}
  and the result follows.
\end{proof}

\subsection{Proof of Theorem~\ref{th:max}, completed}\label{se:proof of thmax}

Let $X_n$, $m_n$ and $A_n$, $n\in\dN$, be as in \eqref{eq:Xn}, \eqref{eq:mn},
and \eqref{eq:A} respectively. Without loss of generality, we can assume that
both functions $f$ and $g$ in \eqref{eq:STP} are non-increasing. Denote
\[
\gamma:=\inf_{n\to\infty}\frac{m_n}{n}>0
\]
and fix any $\veps\in(0,1/16]$. We define
$\alpha\in(0,\sqrt{\gamma}/(1+\sqrt{\gamma}))$ as the largest number
satisfying
\[
\frac{1+\alpha}{1-t-\alpha}\leq \frac{1+\veps}{1-t}%
\quad\text{for all}\quad%
t\in\Bigl(0,\frac{1}{1+\sqrt{\gamma}}\Bigr].
\]
Further, let $n_\veps\in\dN$ be such that
\[
g(k)\leq \veps^{11/2}%
\quad\text{and}\quad%
f(k)\leq \veps^2\;\;\;\forall k\geq \veps^{17}n_\veps
\]
and, additionally,
\[
\frac{2(48+192g(1))\alpha^{-3/2}}
{(\sqrt{\gamma}/(1+\sqrt{\gamma}))^{3/2}(\sqrt{\veps
    n_\veps}/4)^{1/2}}\leq \veps
\]
(the last condition will be needed later to simplify the estimate coming from Lemma~\ref{le:ED2}).
From now on, we fix $n\ge n_\veps$. For convenience, we let $m:=m_n$ and
$X^{(1)},\dots,X^{(m)}$ be i.i.d.\ copies of $X_n$. We define
\[
A^{(0)}:=0%
\quad\text{and}\quad%
A^{(k)}=A^{(k-1)}+X^{(k)}\otimes X^{(k)},\quad 1\leq k\leq m, 
\]
so that $A_n=A^{(m)}$. Set
\[
u_0:=n+\sqrt{mn}
\]
and let $u_1,\dots,u_m$ be a collection of \emph{random} numbers defined
inductively as follows:
\[
u_k:=u_{k-1}+\Delta_1^k+\Delta_2^k+\Delta_R^k,
\]
where $\Delta_1^k$ is taken from Lemma~\ref{le:Q1} (with $A^{(k-1)}$, $u_{k-1}$
and $X^{(k)}$ replacing $A$, $u$ and $x$, respectively), $\Delta_2^k$ is defined
as in Lemma~\ref{le:Q2} (again, with $A^{(k-1)}$, $u_{k-1}$ and $X^{(k)}$ taking
place of $A$, $u$ and $x$), and $\Delta_R^k$ is the regularity shift defined by
\[
\Delta_R^k:=\min\Bigl\{\ell\in\dN\cup\{0\}:\,\overline{m}_{A^{(k)}}(u_{k-1}+\Delta_1^k+\Delta_2^k+\ell)
-\overline{m}_{A^{(k)}}(u_{k-1}+\Delta_1^k+\Delta_2^k+\ell+1)\le\frac{1}{2\veps n}\Bigr\}
\]
(the factor ``$2$'' in front of $\veps n$ in the above definition does not carry any special
meaning, and introduced for a purely technical reason).
Note that, by lemmas~\ref{le:Q1} and~\ref{le:Q2}, for any $k\geq 1$ we have
$\overline{m}_{A^{(k)}}(u_{k-1}+\Delta_1^k+\Delta_2^k)\leq \overline{m}_{A^{(k-1)}}(u_{k-1})$,
implying
\[
\overline{m}_{A^{(k)}}(u_k)+\alpha %
\le\overline{m}_{A^{(0)}}(u_0)+\alpha %
=\frac{n}{u_0}+\alpha %
=\frac{n}{n+\sqrt{mn}}+\alpha %
<1\quad\text{(deterministically)}
\]
for all $k\ge 0$. The quantities $\Delta_R^k$ play a role analogous to numbers $\delta_R^k$
from the section dealing with the smallest eigenvalue.
The following lemma gives an estimate of the cumulative impact of the regularity shifts:
\begin{lemma}\label{le:deltaC}
  With $\Delta_R^k$ ($1\leq k\leq m$) defined above, the following holds
  deterministically:
  \[
  \sum_{k=1}^m\Delta_R^k\le 2\veps n.
  \]
\end{lemma}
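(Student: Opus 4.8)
The plan is to mimic exactly the telescoping argument used for $\sum_k \delta_R^k$ in Lemma~\ref{le:lb:deltaC}, but now working with $\overline{m}$ on the right of the spectrum. The key point is that $\Delta_R^k$ is, by construction, the smallest number of unit-length steps one must take \emph{to the right} of $u_{k-1}+\Delta_1^k+\Delta_2^k$ before the one-step decrement of $\overline{m}_{A^{(k)}}$ drops to at most $1/(2\veps n)$; hence for every integer $\ell$ with $0\le\ell<\Delta_R^k$ the decrement strictly exceeds $1/(2\veps n)$. Summing these decrements along the $\Delta_R^k$ consecutive unit intervals gives
\[
\overline{m}_{A^{(k)}}(u_{k-1}+\Delta_1^k+\Delta_2^k)-\overline{m}_{A^{(k)}}(u_{k-1}+\Delta_1^k+\Delta_2^k+\Delta_R^k)>\frac{\Delta_R^k}{2\veps n}.
\]

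**Main steps.** First I would record the monotone-decrease inequality just displayed and rewrite it, using the definition $u_k=u_{k-1}+\Delta_1^k+\Delta_2^k+\Delta_R^k$, as
\[
\overline{m}_{A^{(k)}}(u_k)<\overline{m}_{A^{(k)}}(u_{k-1}+\Delta_1^k+\Delta_2^k)-\frac{\Delta_R^k}{2\veps n}.
\]
Next I would invoke the feasibility bound already established right before the lemma: by Lemmas~\ref{le:Q1} and~\ref{le:Q2} (more precisely the feasible-upper-shift mechanism of Lemma~\ref{le:Q1Q2}), $\Delta_1^k+\Delta_2^k$ is a feasible upper shift for $A^{(k-1)}$, $X^{(k)}$, $u_{k-1}$, so
\[
\overline{m}_{A^{(k)}}(u_{k-1}+\Delta_1^k+\Delta_2^k)\le\overline{m}_{A^{(k-1)}}(u_{k-1}).
\]
Chaining the two inequalities yields $\overline{m}_{A^{(k)}}(u_k)\le\overline{m}_{A^{(k-1)}}(u_{k-1})-\Delta_R^k/(2\veps n)$ for every $k\ge 1$. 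Telescoping from $k=1$ to $k=m$ and using $\overline{m}_{A^{(m)}}(u_m)>0$ (which holds since $u_m>\lambda_1(A_n)=\lambda_{\max}(A_n)$, as all the $u_k$ stay strictly above the spectrum by the feasibility construction) together with $\overline{m}_{A^{(0)}}(u_0)=n/u_0=n/(n+\sqrt{mn})\le 1$, one gets
\[
\frac{1}{2\veps n}\sum_{k=1}^m\Delta_R^k\le\overline{m}_{A^{(0)}}(u_0)-\overline{m}_{A^{(m)}}(u_m)<1,
\]
hence $\sum_{k=1}^m\Delta_R^k<2\veps n$, which is the claim (with non-strict inequality a fortiori).

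**Anticipated obstacle.** The argument itself is a routine telescoping, so the only real care-point is bookkeeping around the edge cases: one must make sure that all the intermediate points $u_{k-1}+\Delta_1^k+\Delta_2^k+\ell$ for $0\le\ell\le\Delta_R^k$ lie strictly to the right of $\lambda_1(A^{(k)})=\lambda_{\max}(A^{(k)})$, so that $\overline{m}_{A^{(k)}}$ is well-defined, finite, positive and strictly decreasing there — this is guaranteed by the feasible-upper-shift property (Lemma~\ref{le:Q1Q2} gives $u_{k-1}+\Delta_1^k+\Delta_2^k>\lambda_{\max}(A^{(k)})$, and moving further right only helps) — and that the set defining $\Delta_R^k$ is non-empty so the minimum exists (true because $\overline{m}_{A^{(k)}}(t)\to 0$ as $t\to\infty$, so the one-step decrement certainly eventually falls below $1/(2\veps n)$). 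Once these well-definedness points are dispatched, the rest is the clean telescope above; I expect no genuine difficulty beyond making these observations explicit.
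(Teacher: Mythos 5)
Your proof is correct and follows essentially the same route as the paper: the same observation that each of the $\Delta_R^k$ unit steps decreases $\overline{m}_{A^{(k)}}$ by more than $1/(2\veps n)$, chained with the feasibility bound $\overline{m}_{A^{(k)}}(u_{k-1}+\Delta_1^k+\Delta_2^k)\le\overline{m}_{A^{(k-1)}}(u_{k-1})$, then telescoped using $0<\overline{m}_{A^{(m)}}(u_m)$ and $\overline{m}_{A^{(0)}}(u_0)<1$. Your extra remarks on well-definedness (the points staying above the spectrum and the minimum defining $\Delta_R^k$ existing) are correct and merely make explicit what the paper leaves implicit.
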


\begin{proof}
  Take any admissible $k\ge 1$. The definition of $\Delta_R^k$ immediately
  implies that for all $0\le\ell<\Delta_R^k$ we have
  \[
  \overline{m}_{A^{(k)}}(u_{k-1}+\Delta_1^k+\Delta_2^k+\ell)-
  \overline{m}_{A^{(k)}}(u_{k-1}+\Delta_1^k+\Delta_2^k+\ell+1)>\frac{1}{2\veps n}.
  \]
  Hence,
  \begin{align*}
    \overline{m}_{A^{(k)}}(u_k)%
    &=\overline{m}_{A^{(k)}}(u_{k-1}+\Delta_1^k+\Delta_2^k+\Delta_R^k)\\
    &\le\overline{m}_{A^{(k)}}(u_{k-1}+\Delta_1^k+\Delta_2^k)-\frac{\Delta_R^k}{2\veps n}\\
    &\le\overline{m}_{A^{(k-1)}}(u_{k-1})-\frac{\Delta_R^k}{2\veps n}.
  \end{align*}
  Thus, $\overline{m}_{A^{(k)}}(u_k)\le
  \overline{m}_{A^{(k-1)}}(u_{k-1})-\frac{\Delta_R^k}{2\veps n}$ for all $k\ge 1$,
  which, together with the relations $0<\overline{m}_{A^{(k)}}(u_k)< 1$ ($1\leq
  k\leq m$), implies the result.
\end{proof}

Now, fix for a moment any $k\in\{1,\dots,m\}$ and let $\mathcal{F}_{k-1}$ be
the $\sigma$-algebra generated by the vectors $X^{(1)},\dots,X^{(k-1)}$. We
will first estimate the conditional expectation
\[
\dE(\Delta_1^k+\Delta_2^k\,|\,\mathcal{F}_{k-1})
=\dE(\Delta_1^k\,|\,\mathcal{F}_{k-1})+\dE(\Delta_2^k\,|\,\mathcal{F}_{k-1}).
\]
Let the sets $I_j$ ($j\in\dN$) be defined by \eqref{eq:def level sets} with
$A^{(k-1)}$ and $u_{k-1}$ replacing $A$ and $u$, i.e.\
\[
I_j:=\bigl\{i\le n:\, 4^{j-1}\le u_{k-1}-\lambda_i(A^{(k-1)})< 4^j\bigr\},\quad
j\in\dN.
\]
Note that the summand $\Delta^{k-1}_R$
makes sure that
\[
\sum_{j=1}^\infty \frac{|I_j|}{16^j}\leq
\sum_{j=1}^\infty \frac{2|I_j|}{4^j(4^j+1)}
< 2\bigl(\overline{m}_{A^{(k-1)}}(u_{k-1})-
\overline{m}_{A^{(k-1)}}(u_{k-1}+1)\bigr)\le\frac{1}{\veps n}
\]
(when $k=1$, the above estimate holds as well since in that case
$\sum_{j=1}^\infty \frac{|I_j|}{16^j}\leq\frac{16n}{u_0^2}<\frac{1}{\veps n}$).
Applying Lemma~\ref{le:ED1}, we obtain
\[
\bigl\|\dE(\Delta_1^k\,|\,\mathcal{F}_{k-1})\bigr\|_\infty
\le 32g(1)\sqrt{\veps}.
\]

Next, note that the assumptions on $1$-dimensional projections imply that for
any unit vector $y$ we have
\begin{align*}
  \dE|\langle y,X_n\rangle|^{3}%
  &=\int_0^\infty \dP\bigl\{\langle y,X_n\rangle^2\geq\tau^{2/3}\bigr\}\,d\tau\\
  &\leq \sqrt{8}+\int_{\sqrt{8}}^\infty %
  \dP\bigl\{\langle y,X_n\rangle^2-1\geq\tau^{2/3}-1\bigr\}\,d\tau\\
  &\leq \sqrt{8}+4g(1)\int_{\sqrt{8}}^\infty\tau^{-4/3}\,d\tau\\
  &\leq \sqrt{8}+12g(1).
\end{align*}
Next, the upper bound on $\frac{|I_j|}{16^j}$ implies that $I_j=\varnothing$
whenever $j\leq\log_{16}(\veps n)$, whence 
\[
u_{k-1}-\lambda_1(A^{(k-1)})\geq\frac{\sqrt{\veps n}}{4}.
\]
Then, by Lemma~\ref{le:ED2} (applied with $\kappa:=1$), and in view of the inequality
$\overline{m}_{A^{(k-1)}}(u_{k-1})\leq \overline{m}_{A^{(0)}}(u_0)$, we get
\begin{align*}
  \bigl\|%
  &\dE(\Delta_2^k\,|\,\mathcal{F}_{k-1})\bigr\|_\infty\\
  &\le \frac{1+\alpha}{1-\overline{m}_{A^{(0)}}(u_0)-\alpha}
  \Bigl(1+%
  \frac{w_{\ref{le:ED2}}}%
  {(1-\overline{m}_{A^{(0)}}(u_0)-\alpha)^{1/2}(\sqrt{\veps n}/4)^{1/2}}\Bigr),
\end{align*}
where
\[
w_{\ref{le:ED2}}%
=4\sup_{\|y\|=1}\dE|\langle
y,X_n\rangle|^{3}/(\alpha^{3/2}(1-2^{-1/2})) \leq (48+192g(1))\alpha^{-3/2}.
\]
By the choice of $\alpha$, we have
\[
\frac{1+\alpha}{1-\overline{m}_{A^{(0)}}(u_0)-\alpha}%
\leq \frac{1+\veps}{1-\overline{m}_{A^{(0)}}(u_0)},
\]
and by the choice of $n_\veps$, we have
\[
\frac{w_{\ref{le:ED2}}(1+\alpha)}%
{(1-\overline{m}_{A^{(0)}}(u_0)-\alpha)^{3/2}(\sqrt{\veps n}/4)^{1/2}}%
\leq \veps.
\]
Hence, we obtain
\[
\bigl\|\dE(\Delta_2^k\,|\,\mathcal{F}_{k-1})\bigr\|_\infty%
\leq (1+\veps)\Bigl(1+\sqrt{\frac{n}{m}}\Bigr)+\veps.
\]
Summing up over $k\in\{1,\ldots,m\}$ and applying Lemma~\ref{le:deltaC}, we
get
\begin{align*}
  \dE\lambda_1(A_n)
  &\leq \dE u_{m}\\
  &\leq n+\sqrt{mn}+32g(1)\sqrt{\veps}m+(1+\veps)(m+\sqrt{mn})+\veps m+2\veps n\\
  &=(\sqrt{n}+\sqrt{m})^2+32g(1)\sqrt{\veps}m+\veps(2m+2n+\sqrt{mn}).
\end{align*}
Since $\veps$ can be chosen arbitrarily small, we get the result.

\section*{Acknowledgements}

D.C.\ would like to warmly thank Radosław Adamczak, Charles Bordenave, and
Alain Pajor for discussions on this topic in Warsaw, Toulouse, and Paris.
K.T.\ would like to thank Pierre Youssef for introducing him to the result of
Batson--Spielman--Srivastava and especially thank Prof.\ Alain Pajor for the
invitation to visit UPEM in November--December, 2014. A significant part of
the present work was done during that period.

\makeatletter%
\def\@MRExtract#1 #2!{#1}
\renewcommand{\MR}[1]{
  \xdef\@MRSTRIP{\@MRExtract#1 !}%
  \href{http://www.ams.org/mathscinet-getitem?mr=\@MRSTRIP}{MR-\@MRSTRIP}}%
\makeatother%

\newcommand{\etalchar}[1]{$^{#1}$}
\providecommand{\bysame}{\leavevmode\hbox to3em{\hrulefill}\thinspace}
\providecommand{\MR}{\relax\ifhmode\unskip\space\fi MR }
\providecommand{\MRhref}[2]{%
  \href{http://www.ams.org/mathscinet-getitem?mr=#1}{#2}
}
\providecommand{\href}[2]{#2}

\end{document}